\definecolor{mypink1}{rgb}{0.858, 0.188, 0.478}
\newcommand{\stkout}[1]{\ifmmode\text{\sout{\ensuremath{#1}}}\else\sout{#1}\fi}
\newcommand{\F}{\mathcal{F}}
\newcommand{\om}{\vartheta}
\newcommand{\Gw}{R}
\newcommand{\df}{\buildrel\mathrm{def}\over=}
\newcommand{\Bell}{\boldsymbol{B}}
\newcommand{\BU}{\widehat{B}}
\newcommand{\BMO}{\mathrm{BMO}}
\newcommand{\BMOm}{\mathrm{BMO}^{\mathrm{m}}}
\newcommand{\eps}{\varepsilon}
\newcommand{\vf}{\varphi}
\DeclareMathOperator{\conv}{conv}
\DeclareMathOperator{\E}{\mathbb{E}}
\renewcommand{\le}{\leqslant}
\renewcommand{\ge}{\geqslant}
\renewcommand{\leq}{\leqslant}
\renewcommand{\geq}{\geqslant}
\renewcommand{\emptyset}{\varnothing}
\newcommand{\xx}{\bar{x}}
\newcommand{\yy}{\bar{y}}
\newcommand{\zz}{\bar{z}}
\newcommand{\cc}{\bar{c}}
\newcommand{\mmm}{{\bf M}}
\newcommand{\ttt}{\rho}
\newcommand{\bbb}[2]{b_{{}_{\sqrt{#2-#1^2}}}(#1,#2)}
\newcommand{\rrr}{{\bf r}}
\newcommand{\OmSmile}{\omega_{\mathrm{smile}}}
\newcommand{\OmR}{\omega_{{}_\mathrm{R}}}
\newcommand{\OmL}{\omega_{{}_\mathrm{L}}}
\newcommand{\R}{\mathbb{R}}
\newcommand{\eq}[1]{\begin{equation}{#1}\end{equation}}
\newcommand{\mlt}[1]{\begin{multline}{#1}\end{multline}}
\newcommand{\alg}[1]{\begin{align}{#1}\end{align}}
\newcommand{\Bs}{{\bf R}}
\newcommand{\Set}[2]{\Big\{{#1}\;\Big|\,{#2}\Big\}}
\newcommand{\uR}{u_{{}_{\mathrm{R}}}}
\newcommand{\uL}{u_{{}_{\mathrm{L}}}}
\newcommand{\mr}{m_{{}_{\mathrm{R}}}}
\newcommand{\ml}{m_{{}_{\mathrm{L}}}}
\newcommand{\GL}{G_{{}_{\mathrm{L}}}}
\newcommand{\GR}{G_{{}_{\mathrm{R}}}}
\newcommand{\vr}{v_{{}_{\mathrm{R}}}}
\newcommand{\ur}{u_{{}_{\mathrm{R}}}}
\newcommand{\ul}{u_{{}_{\mathrm{L}}}}
\newcommand{\vl}{v_{{}_{\mathrm{L}}}}
\newcommand{\fff}{\Phi}
\newcommand{\uuu}{v}
\newcommand{\vvv}{w}
\newcommand{\Geqref}[1]{\stackrel{\scriptscriptstyle{\eqref{#1}}}{\geq}}
\newcommand{\tl}{t_{\mathrm{L}}}
\newcommand{\tr}{t_{\mathrm{R}}}
\newtheorem{Le}{Lemma}[section]
\newtheorem{Def}[Le]{Definition}
\newtheorem{Th}[Le]{Theorem}
\newtheorem{Cor}[Le]{Corollary}
\newtheorem{Rem}[Le]{Remark}
\numberwithin{equation}{section}
\begin{document}
\author{Dmitriy~Stolyarov \and Vasily~Vasyunin \and Pavel~Zatitskiy \and Ilya~Zlotnikov}
\title{Sharp moment estimates for martingales 
\\
with uniformly bounded square functions
\thanks{Supported by the Russian Science Foundation Grant 19-71-10023.}}

\maketitle
\begin{abstract}
We provide sharp bounds for the exponential moments and~$p$-moments,~$1\leq p \leq 2$, of the 
terminate distribution of a martingale whose square function is uniformly bounded by one. 
We introduce a Bellman function for the corresponding extremal problem and reduce it to 
the already known Bellman function on~$\BMO([0,1])$. In the case of tail estimates, a similar 
reduction does not work exactly, so we come up with a fine supersolution that leads to sharp tail estimates.
\end{abstract}

\section{Introduction}\label{S1}
\subsection{Chang--Wilson--Wolff inequality}\label{s11}

Let~$(X,\Sigma,P)$ be an atomless complete probability space equipped with a discrete time 
filtration~$\F = \{\F_n\}_{n\geq 0}$. Let~$\F_0 = \{\varnothing, X\}$ and let~$\F$ generate~$\Sigma$. 
Assume for simplicity that each~$\sigma$-algebra~$\F_n$ is finite, i.\,e., consists of a finite 
number of sets. Consider a real-valued martingale~$\varphi = \{\varphi_n\}_n$ adapted to~$\F$ 
and define its square function~$S\varphi$ by the formula
\begin{equation}
S\varphi = \Big(\sum\limits_{n=0}^\infty (\varphi_{n+1} - \varphi_n)^2\Big)^\frac12.
\end{equation}
In what follows we will always talk about real martingales adapted to filtrations as above unless otherwise specified. 
We call a martingale~$\varphi$ simple if $\varphi_{n+1}=\varphi_n$ for $n$ sufficiently large.
In this paper, we make an attempt to describe the distribution of~$\varphi_{\infty}$ (which is 
the limit value of the martingale,~$\varphi_{\infty} = \lim_{n\to \infty} \varphi_n$) under 
the assumption that~$S\varphi$ is uniformly bounded. From general theory (see~\eqref{Orthogonality} below),~$\varphi$ is 
a~$\BMO$-martingale provided~$S\varphi \in L_{\infty}$. Thus, by the John--Nirenberg 
inequality,~$\varphi_{\infty}$ is a subexponential random variable. Namely, there exist 
positive constants~$c_1$ and~$c_2$ such that 
\begin{equation}\label{WeakJNNonSharp}
P(\varphi_\infty -\varphi_0 \geq t)\leq c_2e^{-\frac{c_1t}{\ \|S\varphi\|_{L_{\infty}}}}, \qquad t>0,
\end{equation}
for any martingale~$\varphi$. We focus on sharp estimates of this kind. In particular, we aim 
to compute the best possible values of~$c_1$ and~$c_2$ (see Corollary~\ref{cor_sharp_tail} below). 
According to the knowledge of the authors, such sharp estimates are not known. 

In the case where~$\F$ is a dyadic filtration (by that we mean that any atom in~$\F_n$ is split 
into two atoms of equal mass in~$\F_{n+1}$), a much better estimate exists. The famous 
Chang--Wilson--Wolff inequality (see Theorem 3.1 in~\cite{CWW1985} for the original formulation and~\cite{SlavinVolberg2007} for further development) says that the 
distribution of~$\varphi$ is subgaussian:
\begin{equation}
P(\varphi_{\infty} -\varphi_0 \geq t)\leq e^{-\frac{t^2}{\ 2\|S\varphi\|^2_{L_{\infty}}}}, \qquad t>0.
\end{equation}
In a recent paper~\cite{IvanisviliTreil2019}, Ivanisvili and Treil generalized this result to the 
case where the filtration~$\F$ has bounded distortion~$\alpha$, which means that each atom in~$\F_n$ 
has at least~$\alpha$ mass of its parental atom. In this case,
\begin{equation}
P(\varphi_{\infty} -\varphi_0 \geq t)\leq e^{-\frac{\alpha t^2}{\ \|S\varphi\|^2_{L_{\infty}}}}, \qquad t>0.
\end{equation}
This result hints us that the distribution of~$\varphi_{\infty}$ may no longer be subgaussian if we 
do not make assumptions about regularity of the filtration. As we will see later, this is indeed 
the case (for example, the inequality~\eqref{WeakJNNonSharp} is sharp for certain choice of~$c_1$ 
and~$c_2$, see Corollary~\ref{cor_sharp_tail} below). 

Since we focus on sharp estimates, it is natural to consider not only tail estimates, but also 
inequalities for the exponential moments and~$p$-moments. In particular, one may wonder what are 
the largest possible values of the quantities~$\E e^{\lambda \varphi_\infty}$, or~$\E |\varphi_\infty|^p$ 
under the assumption~$\|S\varphi\|_{L_{\infty}} \leq 1$. We will partially answer this question, 
see Corollaries~\ref{p-momentCorollary} and~\ref{exponentialmomentCorollary} below.  One may go further, 
pick an arbitrary function~$f$, and ask about the largest possible value of~$\E f(\varphi_\infty)$ under 
the same assumption. We will study this problem for the cases when~$f'''$ either does not change sign 
or changes sign from~$+$ to~$-$ once.

Some of the results of the present paper were announced in the short report~\cite{SVZZ2019}. 
We also provided some proofs there. The present paper contains the remaining proofs. 
In a sense,~\cite{SVZZ2019} contains the reasoning that do not depend on the geometry of specific 
Bellman functions. They are much shorter than the treatment of Bellman functions we present here.

For the reader who is not interested in the Bellman function technique, 
Corollaries~\ref{p-momentCorollary},~\ref{exponentialmomentCorollary}, and~\ref{cor_sharp_tail} 
may be considered as the main results of the paper. Lemma~\ref{SimpleMajorant} and 
Theorems~\ref{CoincidenceTheorem} and \ref{th_bellman_is_less_b_eps} are more important 
from the Bellman function point of view.

\subsection{Estimates for $\BMO$ functions}\label{s12}
The space~$\BMO$ is pivotal for our considerations. There are several equivalent norms in this space. 
Since we are dealing with sharp estimates, the choice of a specific norm is crucial. 
The space~$\BMOm$ called the space of martingales of bounded mean oscillation is defined 
as follows (see, e.\,g., Chapter II in~\cite{Kazamaki1994})
\begin{equation*}
\|\varphi\|_{\BMOm}^2 = 
\sup\Big\{\big\|\E\big((\varphi_{\infty} - \varphi_{\tau})^2\mid \F_{\tau}\big)\big\|_{L_\infty}\,\Big|\;
\tau \text{ is a stopping time}\Big\}.
\end{equation*}
A simple orthogonality argument
\begin{multline}\label{Orthogonality}
\E\big((\varphi_{\infty} - \varphi_{\tau})^2\mid \F_{\tau}\big) = 
\E\Big(\Big(\sum\limits_{n \geq \tau} (\varphi_{n+1} -\varphi_n)\Big)^2\,\Big|\; \F_{\tau}\Big) =
\\ 
\E\Big(\sum\limits_{n \geq \tau}(\varphi_{n+1} -\varphi_n)^2\,\Big|\; \F_{\tau}\Big) \leq 
\E\big((S\varphi)^2\mid \F_\tau\big),
\end{multline}
leads to the inequality~$\|\varphi\|_{\BMOm} \leq \|S\varphi\|_{L_{\infty}}$. 

The space~$\BMOm$ has its real analysis counterpart (see, e.\,g., Chapter IV in~\cite{Stein1993} 
for more information). The~$\BMO$ space on the unit interval is defined with the help of the seminorm
\begin{equation}\label{BMOnormDefinition}
\|\psi\|_{\BMO([0,1])}^2=
\sup\Big\{\frac{1}{|J|}\int\limits_{J}\Big(\psi(x) - \frac{1}{|J|}\int\limits_J \psi\Big)^2dx\;\Big|\, 
J \text{ is a subinterval of } [0,1]\Big\}.
\end{equation}
We note that this definition is not the most common in real analysis. A version based on the~$L_1$ 
norm instead of~$L_2$ is more widespread (the two~$\BMO$ seminorms are equivalent). The~$L_2$ based 
version is closely related to the martingale~$\BMOm$ space. We denote the non-increasing rearrangement 
(the inverse function to the distribution function of~$\xi$) of a random variable~$\xi$ by ~$\xi^*$:
\begin{equation*}
\xi^*(t) = \inf\{\alpha\mid P(\xi > \alpha) \leq t\},\qquad t \in [0,1).
\end{equation*}

\begin{Th}\label{Embedding}
The inequality~$\|\varphi_{\infty}^*\|_{\BMO([0,1])} \leq \|S\varphi\|_{L_{\infty}}$ holds 
for any martingale~$\varphi$ and is sharp.
\end{Th}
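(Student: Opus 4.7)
The plan is to reduce the claimed inequality to a bound on the conditional variance of $\varphi_\infty$ over ``middle layers'' of its distribution, then to extract that bound from the orthogonality inequality~\eqref{Orthogonality} via a stopping-time argument.

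Fix a subinterval $J=[a,b]\subset[0,1]$ and set $\alpha:=\varphi_\infty^*(a)$, $\beta:=\varphi_\infty^*(b)$. Under a mild genericity assumption on the distribution of $\varphi_\infty$ at these levels (removed at the end by a limiting argument), the middle-layer set $E:=\{\beta<\varphi_\infty\le\alpha\}$ has $P(E)=b-a$, and $\varphi_\infty^*|_{J}$ is equidistributed with $\varphi_\infty|_{E}$. Consequently
\[
\frac{1}{|J|}\int_J(\varphi_\infty^*-\av{\varphi_\infty^*}{J})^2\,dt=\frac{1}{P(E)}\,\E\bigl[(\varphi_\infty-\av{\varphi_\infty}{E})^2\mathbf{1}_E\bigr],
\]
so the theorem reduces to the bound $\var(\varphi_\infty\mid E)\le\|S\varphi\|_{L_\infty}^2$ for every middle-layer set $E$.

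To obtain this bound I introduce the first-exit time $\tau:=\inf\{n\,\colon\,\varphi_n\notin(\beta,\alpha]\}$ (cases with $\varphi_0\notin(\beta,\alpha]$ are handled analogously). Conditioning on $\F_\tau$ and using~\eqref{Orthogonality} yields, for any constant $c$,
\[
\E\bigl[(\varphi_\infty-c)^2\mid\F_\tau\bigr]\le\|S\varphi\|_{L_\infty}^2+(\varphi_\tau-c)^2.
\]
The pointwise inequality $|\varphi_{n+1}-\varphi_n|\le S\varphi\le\|S\varphi\|_{L_\infty}$ bounds the overshoot $|\varphi_\tau-\alpha|$ or $|\varphi_\tau-\beta|$ past the layer by $\|S\varphi\|_{L_\infty}$, so the error term $(\varphi_\tau-c)^2$ stays of the right size after choosing $c\in\{\alpha,\beta\}$ according to which side of the layer the martingale leaves on.

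The main difficulty is that $E$ is \emph{not} $\F_\tau$-measurable: a path that leaves $(\beta,\alpha]$ at $\tau$ may later return and still end in $E$. Absorbing these re-entries without losing the sharp constant $1$ is the quantitative heart of the argument; I would iterate the exit construction over successive excursions of $\varphi$ in and out of the layer, combining the overshoot bound with a convexity step that replaces $\av{\varphi_\infty}{E}$ by the boundary values $\alpha$ or $\beta$ inside the variance. Sharpness of the theorem is then immediate from the two-atom martingale $\varphi_0=0$, $\varphi_1=\pm 1$ with equal probability: one has $\|S\varphi\|_{L_\infty}=1$, while $\varphi_\infty^*$ is the $\pm 1$ step function on $[0,1]$, whose $\BMO([0,1])$ seminorm equals $1$ (realized on $J=[0,1]$).
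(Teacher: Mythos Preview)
Your reduction to the estimate $\var(\varphi_\infty\mid E)\le\|S\varphi\|_{L_\infty}^2$ for middle-layer sets $E=\{\beta<\varphi_\infty\le\alpha\}$ is correct, and your sharpness example agrees with the paper's. The gap is precisely the step you flag yourself: the set $E$ is not $\F_\tau$-measurable, and your proposal to ``iterate the exit construction over successive excursions'' is not carried out. Already your displayed bound $\E[(\varphi_\infty-c)^2\mid\F_\tau]\le\|S\varphi\|_{L_\infty}^2+(\varphi_\tau-c)^2$ together with the overshoot estimate only gives a right-hand side of size $2\|S\varphi\|_{L_\infty}^2$ on each atom of $\F_\tau$, with no visible mechanism to cancel the extra factor; iterating naively over re-entries compounds rather than cures this. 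The Remark immediately following the theorem is a genuine obstruction to your line of attack: the inequality $\|\varphi_\infty^*\|_{\BMO([0,1])}\le C\|\varphi\|_{\BMOm}$ fails for every $C$, so any argument that feels only the conditional variances from~\eqref{Orthogonality} must fail. What distinguishes $\|S\varphi\|_{L_\infty}$ from $\|\varphi\|_{\BMOm}$ is the pointwise, step-by-step budget $\sum_{n\ge m}(\varphi_{n+1}-\varphi_n)^2\le\|S\varphi\|_{L_\infty}^2-(S\varphi_m)^2$, and your overshoot bound captures only a crude one-step shadow of this.

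The paper's proof is structurally different and does track this budget. One forms the $\R^2$-valued process $M_n=(\varphi_n,\E(\varphi_\infty^2\mid\F_n))$ and verifies, via the elementary geometric Lemma~\ref{SimpleGeometry}, that at every step the convex hull of the children of $M_n$ stays inside the parabolic strip $\omega_1=\{x^2\le y\le x^2+1\}$; that is, $M$ is an $\omega_1$-martingale in the sense of Definition~\ref{BasicMartingales}. The rearrangement bound $\|(M_\infty^1)^*\|_{\BMO([0,1])}\le 1$ is then read off from Lemma~\ref{MartFunct} (Theorem~3.4 of~\cite{StolyarovZatitskiy2016}). The second coordinate $\E(\varphi_\infty^2\mid\F_n)$ is exactly the device that encodes how much square-function budget remains after $n$ steps, replacing the excursion bookkeeping you left open.
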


Here and in what follows the notation~$\varphi_{\infty}^*$ means the monotonic rearrangement 
of~$\varphi_\infty$. This theorem was proved in~\cite{SVZZ2019}. Though Theorem~\ref{Embedding} 
says there is a certain relationship between martingales~$\varphi$ whose square function is bounded 
and functions~$\psi$ on the unit interval that belong to the~$\BMO$ space, we warn the reader 
against identification of these classes of objects, which have different nature and origin.

\begin{Rem}
The estimate~$\|\varphi_{\infty}^*\|_{\BMO([0,1])} \leq \|\varphi\|_{\BMOm}$ is not true in general 
for discrete time filtrations. To see that\textup, consider the case where~$\F$ is dyadic. The inequality
\begin{equation*}
\|\varphi_{\infty}^*\|_{\BMO([0,1])} \leq \frac{3}{2\sqrt{2}}\|\varphi\|_{\BMOm},\quad \F \hbox{ is dyadic},
\end{equation*}
is sharp \textup(and true\textup{),} see Corollary~$1$ in~\textup{\cite{SVZ2015}}. What is more\textup, 
for any~$C > 0$ there exist a discrete filtration~$\F$ and a martingale~$\varphi$ adapted to it such 
that the inequality
\begin{equation*}
\|\varphi_{\infty}^*\|_{\BMO([0,1])} \leq C\|\varphi\|_{\BMOm}
\end{equation*}
fails.
\end{Rem}

Theorem~\ref{Embedding} leads to many nice inequalities. In particular, it says that
\eq{\label{Pre-BellmanEmbedding}
\sup\Big\{\E f(\varphi_{\infty})\,\Big|\; \varphi_0 = x,\; \|S\varphi\|_{L_{\infty}} \leq 1\Big\} \leq 
\sup\Big\{\int\limits_0^1 f(\psi)\,\Big|\, \int\limits_0^1\psi = x,\; \|\psi\|_{\BMO([0,1])}\leq 1\Big\}
}
for any non-negative function~$f\colon \R \to \R$. It is reasonable to fix the expectation of 
our martingale since~$\varphi_0$ does not affect the square function, but has strong influence 
on the quantity~$\E f(\varphi_{\infty})$. There are two surprising facts about 
formula~\eqref{Pre-BellmanEmbedding}. The first one is that the inequality turns into equality 
quite often (in particular, for the important cases~$f(t) = e^{\lambda t}$ and~$f(t)=|t|^p,1\leq p\leq 2$). 
The second fact is that the supremum on the right hand side may be computed exactly for arbitrary~$f$, 
which satisfies some mild regularity assumptions. We briefly describe these results.

We fix the second moment as well and write the definition of the Bellman 
function~$b_{\eps}\colon\omega_\eps\to\mathbb{R}$,
\begin{equation}\label{SmallBellman}
\begin{aligned}
b_{\eps}(x,y) &= \sup\bigg\{\int\limits_0^1f(\psi)\,\bigg|\; 
\int_0^1\!\psi = x,\; \int_0^1\!\psi^2 = y,\ \|\psi\|_{\BMO([0,1])}\leq \eps\bigg\}, 
\\ 
\omega_\eps &= \{(x,y)\in\mathbb{R}^2\mid x^2 \leq y\leq x^2+\eps^2\}.
\end{aligned}
\end{equation} 
This Bellman function satisfies the boundary condition~$b_{\eps}(x,x^2) = f(x)$. It appears that 
one may compute the function~$b_\eps$ for arbitrary~$f$. The answer (algorithm) is quite complicated. 
We refer the reader to the paper~\cite{ISVZ2018} for treatment of the general case. 
The paper~\cite{IOSVZ2016} considers less general case (the authors make additional assumptions 
on the structure of~$f$), however, provides a much shorter presentation. The short 
report~\cite{IOSVZ2012} outlines the results. In fact, the particular cases that are important 
for applications were computed in earlier papers~\cite{SlavinVasyunin2011},
\cite{SlavinVasyunin2012},~\cite{VasyuninVolberg2014}, and~\cite{Vasyunin2003}.

The main reason why~$b_{\eps}$ is a tractable object is that it can be described geometrically, 
namely, in terms of locally concave functions. By a locally concave function on a domain 
we mean a function whose restriction to any segment lying in the domain entirely, is concave. 

\begin{Th}[Main theorem and Corollary~5.4 in~\cite{StolyarovZatitskiy2016}]\label{Duality}
Let~$f$ be bounded from below. The function~$b_{\eps}$ can be described as the pointwise minimal 
function among all locally concave functions~$G\colon\omega_\eps\to \mathbb{R}$ that satisfy 
the boundary condition~$G(x,x^2) = f(x)$.
\end{Th}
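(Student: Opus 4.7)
The plan is to establish both directions of the characterization. First, I would show that $b_{\eps}$ itself is a locally concave function on $\omega_\eps$ satisfying $b_{\eps}(x,x^2)=f(x)$, so that it belongs to the class over which the infimum is taken. Second, I would show that every locally concave $G\colon\omega_\eps\to\R$ with $G(x,x^2)=f(x)$ dominates $b_{\eps}$ pointwise on $\omega_\eps$. The boundary condition for $b_{\eps}$ is immediate: on the parabola $y=x^2$ the constraint $\int_0^1 \psi^2 = \bigl(\int_0^1 \psi\bigr)^2$ forces $\psi \equiv x$ almost everywhere, so $\int f(\psi) = f(x)$ and the supremum equals $f(x)$.

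For local concavity of $b_{\eps}$, I would fix $P_1,P_2 \in \omega_\eps$ with $[P_1,P_2] \subset \omega_\eps$, some $t \in (0,1)$, and $P = t P_1 + (1-t) P_2$. Choosing near-optimal admissible functions $\psi_1,\psi_2$ for $P_1,P_2$, I would build $\psi$ on $[0,1]$ by rescaling $\psi_1$ onto $[0,t]$ and $\psi_2$ onto $[t,1]$; then the moments of $\psi$ equal $P$ and $\int f(\psi) = t\int f(\psi_1) + (1-t)\int f(\psi_2)$, so the only thing to verify is $\|\psi\|_{\BMO([0,1])} \leq \eps$. Subintervals contained in $[0,t]$ or $[t,1]$ inherit the BMO bound from $\psi_1$ or $\psi_2$. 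The delicate point is subintervals straddling $t$: their moments are convex combinations of points $Q_1,Q_2 \in \omega_\eps$ arising as moments of $\psi_1$ and $\psi_2$ on certain subintervals, and such combinations must be shown to lie in $\omega_\eps$. I would exploit the hypothesis $[P_1,P_2]\subset\omega_\eps$ after first truncating $\psi_1,\psi_2$ so that each equals its own mean $x_i$ on small buffer intervals near the endpoints of $[0,1]$; this forces all straddling moments into an arbitrarily small neighborhood of the segment $[P_1,P_2]$ and hence into $\omega_\eps$.

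For the minimality inequality, I would fix a locally concave $G$ with the boundary condition and an admissible $\psi$ with moments $(x,y)$. By approximation it suffices to handle piecewise constant $\psi$ with finitely many values and induct on the number of pieces: the base case is the boundary condition, and the inductive step splits $[0,1]$ at a point $s$, rescales to obtain pieces with moments $(x_L,y_L)$ and $(x_R,y_R)$, applies the inductive hypothesis to each piece, and invokes local concavity of $G$ along the segment from $(x_L,y_L)$ to $(x_R,y_R)$ to conclude $\int f(\psi) \leq G(x,y)$. The main obstacle, shared by both halves of the proof, will be that this segment need not lie in $\omega_\eps$: because the upper boundary $y=x^2+\eps^2$ is convex upward, the quadratic $t \mapsto t y_L + (1-t) y_R - (t x_L + (1-t) x_R)^2$ can strictly exceed $\eps^2$ inside the segment even when both endpoints satisfy the constraint, so neither dyadic nor any fixed splitting is adequate. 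The split $s$ must therefore be chosen adaptively so that this quadratic attains its maximum at the parameter corresponding to $s$, where the value is exactly $y-x^2 \leq \eps^2$; establishing the existence of such a favorable split for every piecewise constant $\psi$, and threading it through the induction, is the principal technical work and is the part carried out in \cite{StolyarovZatitskiy2016}.
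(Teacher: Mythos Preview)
The paper does not prove this theorem at all: it is quoted from \cite{StolyarovZatitskiy2016}, and the only comment offered is that ``the minimal locally concave function has a good probabilistic representation, see Theorem~2.21 in~\cite{StolyarovZatitskiy2016}.'' That remark, together with Definition~\ref{BasicMartingales} and Lemma~\ref{MartFunct}, indicates that the cited proof passes through the martingale side: one shows that the minimal locally concave function equals~$\sup\{\E f(M_\infty^1)\mid M \text{ is an }\omega_\eps\text{-martingale with }M_0=(x,y)\}$, and then identifies this supremum with~$b_\eps$ via Lemma~\ref{MartFunct} and a converse construction. Your purely function-theoretic plan (concatenation for local concavity, adaptive splitting for minimality) is therefore a genuinely different route from what the paper points to.

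Your outline of the minimality direction is accurate: the essential difficulty is exactly that a naive split of~$[0,1]$ produces left/right moment points whose connecting segment may protrude through the upper parabola, and the remedy---choosing the split so that the concave function $t\mapsto ty_L+(1-t)y_R-(tx_L+(1-t)x_R)^2$ attains its maximum precisely at the full moment~$(x,y)$---is indeed the device used in the cited work. Deferring that construction to~\cite{StolyarovZatitskiy2016} is honest.

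Where your sketch has a gap is the local-concavity half. The buffer claim ``this forces all straddling moments into an arbitrarily small neighborhood of the segment~$[P_1,P_2]$'' is not right as stated. A straddling interval contained entirely in the two buffer zones sees only the constant values~$x_1$ and~$x_2$, so its moment lies on the chord~$[(x_1,x_1^2),(x_2,x_2^2)]$, which is not close to~$[P_1,P_2]$ at all. It is true that~$[P_1,P_2]\subset\omega_\eps$ forces~$|x_1-x_2|\le 2\eps$ and hence that lower chord also lies in~$\omega_\eps$; but for straddling intervals of intermediate size (one endpoint in a buffer, the other deep inside~$\psi_1$ or~$\psi_2$) you get a convex combination of an arbitrary point of~$\omega_\eps$ with a point on that chord, and nothing you have said controls such combinations. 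A correct concatenation argument needs an additional idea---for instance replacing~$\psi_1,\psi_2$ by suitable monotone rearrangements so that the moment curves on one-sided subintervals behave well, or else going through the $\omega_\eps$-martingale representation where local concavity is automatic from the splitting rules.
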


The fact behind Theorem~\ref{Duality} is that the minimal locally concave function has a good 
probabilistic representation, see Theorem~2.21 in~\cite{StolyarovZatitskiy2016}. We cite a definition 
introduced in~\cite{StolyarovZatitskiy2016} (in fact,~\cite{StolyarovZatitskiy2016} deals with a more 
general situation; in the case of~$\BMO$ and the parabolic strip~$\omega_\eps$ the continuous time 
version of the definition below had appeared in the literature before~\cite{StolyarovZatitskiy2016}, 
see, e.\,g.,~\cite{Kazamaki1994} and~\cite{Osekowski2015}; as the present paper shows, the discrete 
time definition is more convenient in some contexts).

\begin{Def}\label{BasicMartingales}
An~$\mathbb{R}^2$-valued martingale~$M = \{M_n\}_n$ adapted to~$\{\F_n\}_n$ is called 
an~$\omega_{\eps}$-martingale if it satisfies the conditions listed below.
\textup{\begin{enumerate}
\item \emph{$\F_0 = \{\emptyset, X\}$.}
\item \emph{There exists a random variable~$M_{\infty}$ with values 
in~$\{(t,t^2)\mid t\in\mathbb{R}\}$ such that
\begin{equation*}
\E |M_{\infty}| < \infty \quad \hbox{and}\quad M_n = \E(M_{\infty}\mid \F_n).
\end{equation*}}
\item \emph{For every~$n \in \mathbb{Z}_+$ and every atom~$\sigma$ in~$\F_n$
\begin{equation*}
\conv \{M_{n+1}(z)\}_{z\in \sigma} \subset \omega_\eps.
\end{equation*}
}
\end{enumerate}}
\end{Def}

The third requirement should be understood properly: we define~$M_n = \E(M_{\infty}\mid \F_n)$ 
everywhere and thus, consider the convex hull of a finite number of points. By~$\conv A$ we denote 
the convex hull of a set~$A$.

The following lemma plays a crucial role in the proof of Theorem~\ref{Embedding}.

\begin{Le}[Theorem~3.4 in~\cite{StolyarovZatitskiy2016}]\label{MartFunct}
Let~$M$ be an~$\omega_{\eps}$ martingale. The random variable~$M_{\infty}^1$ \textup(the first coordinate 
of the~$\mathbb{R}^2$-valued random variable~$M_{\infty}$\textup)  satisfies the inequality
\eq{
\|(M_{\infty}^1)^*\|_{\BMO([0,1])} \leq \eps.
}
\end{Le}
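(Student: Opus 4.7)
The plan is to reformulate the conclusion as a family of conditional variance bounds for $M^1_\infty$ on "value layers" and then exploit the convex-hull condition of Definition~\ref{BasicMartingales}.

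I would start with the rearrangement/layer-cake identity. For any subinterval $J = [a,b] \subset [0,1]$, there exist thresholds $t_1 \le t_2$ (the $(1-b)$- and $(1-a)$-quantiles of $M^1_\infty$) and a measurable set $E \subset X$ of mass $|J|$ with $\{t_1 < M^1_\infty < t_2\} \subset E \subset \{t_1 \le M^1_\infty \le t_2\}$ (allowing partial inclusion of the level sets $\{M^1_\infty = t_j\}$ to match $|J|$ exactly) such that
\[
\frac{1}{|J|}\int_J \psi^k\,dx = \langle (M^1_\infty)^k \rangle_E, \qquad k = 1,2,
\]
where $\psi = (M^1_\infty)^*$. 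Since $M^2_\infty = (M^1_\infty)^2$, the required bound $\|\psi\|_{\BMO([0,1])} \le \eps$ becomes the statement
\[
\langle M_\infty \rangle_E := (\langle M^1_\infty \rangle_E,\, \langle M^2_\infty \rangle_E) \in \omega_\eps
\]
for every layer $E$ of the above form.

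I would then reduce to simple martingales by truncating the filtration at large $n$, then refining, if necessary, any atom on which $M^1_\infty \in \{t_1, t_2\}$ so that $E$ becomes a union of $\F_N$-atoms at the terminal level $N$. This refinement preserves the $\omega_\eps$-martingale structure because the split atoms carry identical $M_n$-values. The tower property then gives $\langle M_\infty\rangle_E = \langle M_N\rangle_E$, which is a convex combination of the parabola points $M_N(\sigma)$ indexed by atoms $\sigma \subset E$, with every $M^1_N(\sigma) \in [t_1, t_2]$.

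The main obstacle is that $\omega_\eps$ is \emph{not} a convex subset of $\mathbb{R}^2$: its upper boundary $y = x^2 + \eps^2$ is a concave function of $x$, so a convex combination of points in $\omega_\eps$ can escape $\omega_\eps$ whenever $t_2 - t_1 > 2\eps$. The naive pointwise-in-$\omega_\eps$ argument thus fails for "wide" layers, and the proof must instead exploit the full tree structure of the filtration. Specifically, one assigns to each atom $\sigma$ the partial average $(\bar x_\sigma, \bar y_\sigma) := \langle M_N\rangle_{\sigma \cap E}$ (when $P(\sigma \cap E) > 0$) and shows by induction up the tree that every $(\bar x_\sigma, \bar y_\sigma)$ lies in $\omega_\eps$. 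At a node $\sigma$ fully inside $E$, the merge is exactly the martingale step $M_n(\sigma) = \sum_j (P(\sigma_j)/P(\sigma))\, M_{n+1}(\sigma_j)$, which sits in $\omega_\eps$ by the convex-hull hypothesis at $\sigma$; at a node $\sigma$ only partially inside $E$, one must merge the sub-averages $(\bar x_{\sigma_j}, \bar y_{\sigma_j})$ of the partially contained children, and the contiguity of the layer $E$ in the values of $M^1_\infty$ is exactly what prevents this merge from crossing above the chord of the parabola that witnesses membership in $\omega_\eps$. Making this inductive merge both well-defined and correct, treating the partial-inclusion case, is the technical heart of the proof.

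A standard approximation argument then removes the simplicity assumption, completing the proof in the general case.
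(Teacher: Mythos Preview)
The paper does not prove this lemma; it is quoted from~\cite{StolyarovZatitskiy2016} (their Theorem~3.4), so there is no in-paper argument to compare against. I can only assess your outline on its own merits.

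Your reformulation is correct and is the natural starting point: since $M_\infty^2=(M_\infty^1)^2$, the bound $\|(M^1_\infty)^*\|_{\BMO([0,1])}\le\eps$ is equivalent to $\langle M_\infty\rangle_E\in\omega_\eps$ for every value-layer~$E$, and the reduction to a simple martingale with~$E$ a union of terminal atoms is routine.

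The gap is in the inductive merge at partially contained atoms, which you correctly flag as the heart of the matter but do not actually resolve. Your claim that ``contiguity of the layer \ldots\ prevents this merge from crossing above the chord'' does not hold as stated. At a node~$\sigma$, the convex-hull hypothesis is equivalent (by separating the hull from the convex open epigraph $\{y>x^2+\eps^2\}$) to the existence of a single tangent~$\ell$ to the upper parabola with all $p_j=M_{n+1}(\sigma_j)$ on or below~$\ell$. But your inductive hypothesis only says each $\bar q_{\sigma_j}=\langle M_\infty\rangle_{\sigma_j\cap E}$ lies in~$\omega_\eps$, i.e.\ below the tangent at its \emph{own} first coordinate~$\bar x_{\sigma_j}$ --- not below~$\ell$. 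Distinct tangents to a parabola cross, so the $\bar q_{\sigma_j}$ need not lie under any common tangent, and their weighted average can a priori exit~$\omega_\eps$ through the upper boundary. Contiguity of~$E$ only confines the first coordinates to~$[t_1,t_2]$, and $\omega_\eps\cap\{t_1\le x\le t_2\}$ is non-convex for every nondegenerate interval (the midpoint of $(t_1,t_1^2+\eps^2)$ and $(t_2,t_2^2+\eps^2)$ already lies above the upper parabola). Closing the induction therefore requires a strictly stronger hypothesis --- one that pins each $\bar q_{\sigma_j}$ below a \emph{specific} tangent determined by the tree structure and that propagates from children to parent --- and you have not indicated what that hypothesis is or why it is preserved. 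As written, the sketch identifies the difficulty but does not overcome it.
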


\subsection{Our results}\label{s13}
The function~$f\colon \R\to\R$ will be subject to some requirements. We will always assume~$f$ 
is measurable and non-negative. Of course, one may use a slightly weaker assumption that~$f$ is 
uniformly bounded from below (or replacing~$f$ with~$-f$, that~$f$ is bounded from above). 
Sometimes we will need a regularity assumption.

\begin{Def}\label{DefStReq} 
We say that~$f$ satisfies the \emph{\bf standard requirements} if it is a non-negative twice 
continuously differentiable function\textup, its third distributional derivative is a signed 
measure\textup, which changes sign only finite number of times\textup, and  
\eq{
\int\limits_{\R}e^{-\frac{|t|}{\eps}}|df''(t)| < \infty\qquad \hbox{for some}\quad \eps > 1.
} 
\end{Def}
These requirements for~$f$ are slightly stronger than in~\cite{ISVZ2018} (the authors of that paper did not require the positivity of~$f$). Note that the 
choices $f(t) = |t|^p$,~$p \in [1,2)$, and $f(t) = \chi_{_{[0,\infty)}}(t)$ do not satisfy 
the standard requirements (the first one is quite close, while the second function is very 
far from being~$C^2$-smooth).

We introduce the main character:
\begin{equation}\label{Bellman}
\Bell(x,y,z) = \sup\Big\{\E H_{f}(\varphi_{\infty},(S\varphi)^2 + z^2)\,\Big|\;\E\varphi_\infty = x,\ 
\E\varphi_{\infty}^2 = y\Big\},\quad z \geq 0,
\end{equation}
where 
\begin{equation*}
H_{f}(s,t) = \begin{cases}
-\infty,\quad &t\notin[0,1];\\
f(s),\quad &t\in [0,1],
\end{cases}
\end{equation*}
and the supremum is taken over all simple martingales $\varphi$ adapted to a discrete time filtration. 
We consider only simple martingales here to avoid technicalities. Note that it suffices to work with 
simple martingales to obtain sharp constants in the inequalities~\eqref{c_p}, \eqref{Exponential}, 
and~\eqref{tail_sq} below. 

This Bellman function will help us to find sharp constants in several inequalities. The reader 
familiar with the Burkholder method (see the original papers~\cite{Burkholder1984} 
and~\cite{NazarovTreil96} or the books~\cite{Osekowski2012},~\cite{VasyuninVolberg2020}) 
may say that the~$y$-coordinate is redundant. However, we prefer to keep it, because it ``tracks'' 
the Hilbert space identities that link the square function to the martingale itself. 

\begin{Rem}\label{Monotonicity}
For any~$x,y$ fixed\textup, the function~$z\mapsto \Bell(x,y,z)$ is non-increasing. This follows 
from formula~\eqref{Bellman}\textup, more specifically\textup, from an equivalent formula
\begin{equation}\label{eq150201}
    \Bell(x,y,z) = \sup\Big\{\E f(\varphi_{\infty})\,\Big|\;\E\varphi_\infty = x,\; 
		\E\varphi_{\infty}^2 = y,\; S\varphi \leq \sqrt{1-z^2}\ \text{a.\,s.}\,\Big\}.
\end{equation}
\end{Rem}
As we will prove a little bit later (see Lemma~\ref{PropertiesOfBellmanFunction} below), 
the natural domain for~$\Bell$ is
\begin{equation}\label{Omega}
\Omega = \Big\{(x,y,z)\in\mathbb{R}^3\,\Big|\;x^2 \leq y \leq 1-z^2 + x^2, \quad z \in [0,1]\Big\}.
\end{equation}

We start with the Bellman function counterpart of Theorem~\ref{Embedding}. Recall 
the function~$b_{\eps}$ defined in~\eqref{SmallBellman}.

\begin{Le}\label{SimpleMajorant}
Let~$f$ be a non-negative function. The inequality~$\Bell(x,y,z) \leq b_{\sqrt{1-z^2}}(x,y)$ 
is true for any triple~$(x,y,z)\in\Omega$.
\end{Le}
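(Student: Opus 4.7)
The proof is a direct application of Theorem~\ref{Embedding} together with the equimeasurability of a random variable and its non-increasing rearrangement.

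The plan is to pass from the martingale side to the function-on-$[0,1]$ side using the rearrangement $\varphi_\infty^*$ as an ``admissible competitor'' in the supremum defining $b_{\sqrt{1-z^2}}$. First I would use the equivalent formulation~\eqref{eq150201} to fix an arbitrary simple martingale $\varphi$ adapted to some filtration with $\E\varphi_\infty = x$, $\E\varphi_\infty^2 = y$, and $S\varphi \leq \sqrt{1-z^2}$ a.s. (any $\varphi$ violating the square-function bound contributes $-\infty$ and may be discarded; the case $z=1$, where $\sqrt{1-z^2}=0$, forces $\varphi$ to be constant, so the inequality is trivial via the boundary condition $b_0(x,x^2)=f(x)$).

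Next I would set $\psi = \varphi_\infty^*$, regarded as a function on $[0,1]$. Theorem~\ref{Embedding} immediately yields
\begin{equation*}
\|\psi\|_{\BMO([0,1])} = \|\varphi_\infty^*\|_{\BMO([0,1])} \leq \|S\varphi\|_{L_\infty} \leq \sqrt{1-z^2}.
\end{equation*}
Because $\psi$ is equidistributed with $\varphi_\infty$ (both the unit interval and the probability space $(X,\Sigma,P)$ carry unit mass), for any Borel function $g$ we have $\int_0^1 g(\psi) = \E\, g(\varphi_\infty)$. Applying this to $g(t)=t$, $g(t)=t^2$, and $g=f$ gives the three identities
\begin{equation*}
\int_0^1\!\psi = x,\qquad \int_0^1\!\psi^2 = y,\qquad \int_0^1 f(\psi) = \E f(\varphi_\infty).
\end{equation*}

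Thus $\psi$ is an admissible test function in the supremum defining $b_{\sqrt{1-z^2}}(x,y)$ in~\eqref{SmallBellman}, so $\E f(\varphi_\infty) \leq b_{\sqrt{1-z^2}}(x,y)$. Taking the supremum over all admissible simple martingales $\varphi$ produces the claim $\Bell(x,y,z) \leq b_{\sqrt{1-z^2}}(x,y)$.

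There is essentially no obstacle: the proof is structural rather than computational, and the only substantive input is Theorem~\ref{Embedding}, which is cited as already established in~\cite{SVZZ2019}. The single point that deserves care is the interpretation of the $-\infty$ values of $H_f$, which is handled simply by restricting attention to martingales whose square function obeys $S\varphi\leq\sqrt{1-z^2}$ a.s.; for all such martingales non-negativity of $f$ ensures that $\E f(\varphi_\infty)$ is a well-defined element of $[0,+\infty]$, and the argument above then works without change.
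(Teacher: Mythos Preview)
Your proof is correct and takes a genuinely different route from the paper's. The paper argues via the supersolution method: it verifies that the function $(x,y,z)\mapsto b_{\sqrt{1-z^2}}(x,y)$ satisfies the main inequality of Lemma~\ref{PropertiesOfBellmanFunction}(iii), using the local concavity of $b_\eps$ together with the geometric Lemma~\ref{SimpleGeometry} and the monotonicity of $b_\eps$ in $\eps$; the bound then drops out of part~(iv). Your argument is instead a direct test-function comparison: push each admissible simple martingale through Theorem~\ref{Embedding} to obtain a competitor $\psi=\varphi_\infty^*$ with the correct moments and $\BMO$ norm, and read off the bound straight from the definition~\eqref{SmallBellman} of $b_{\sqrt{1-z^2}}$. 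There is no circularity, since the paper's proof of Theorem~\ref{Embedding} (via Lemma~\ref{SimpleGeometry} and Lemma~\ref{MartFunct}) does not rely on Lemma~\ref{SimpleMajorant}. Your approach is shorter and conceptually cleaner once Theorem~\ref{Embedding} is available, but it imports the nontrivial rearrangement result Lemma~\ref{MartFunct} from~\cite{StolyarovZatitskiy2016}; the paper's proof stays entirely inside the Bellman framework and is more self-contained.
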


This lemma implies~\eqref{Pre-BellmanEmbedding} (plug~$z = 0$ and optimize with respect to~$y$). 
It has already appeared in~\cite{SVZZ2019}. We present its proof in Section~\ref{S2} for completeness 
(in fact, the arguments are quite elementary here).

\begin{Cor}\label{ExtremeSummabilityCor}
Let a measurable function~$h\colon \R \to \R_+$ satisfy
\eq{\label{ExtremeSummability}
\sum\limits_{k\in\mathbb{Z}} e^{-|k|}\sup\limits_{x\in [k-2,k+2]} h(x) < \infty.
}
Then the quantity~$\E h(\varphi_{\infty})$ is finite for any martingale~$\varphi$ such that
$S\varphi\leq1$ almost surely. The bound is uniform with respect to~$\varphi$ as long as~$\varphi_0$ is fixed.
\end{Cor}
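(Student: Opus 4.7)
The plan is to transfer the problem to the $\BMO([0,1])$ side via Theorem~\ref{Embedding} and close it with a sharp form of the John--Nirenberg inequality. Set $x_0:=\varphi_0$ and $\psi:=\varphi_\infty^*$; since $\psi$ shares its distribution with $\varphi_\infty$, one has $\E h(\varphi_\infty)=\int_0^1 h(\psi(t))\,dt$, while Theorem~\ref{Embedding} gives $\|\psi\|_{\BMO([0,1])}\leq 1$ and $\int_0^1 \psi = x_0$. The task thus reduces to bounding $\int_0^1 h(\psi)$ uniformly over all $\psi \in \BMO([0,1])$ of norm at most~$1$ with prescribed mean~$x_0$.

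Next I would invoke the sharp John--Nirenberg estimate for the $L^2$-based $\BMO([0,1])$ seminorm: there is an absolute constant $C_0$ such that $|\{t\in[0,1]:|\psi(t)-x_0|>\lambda\}|\leq C_0\, e^{-\lambda}$ for every $\lambda\geq 0$ and every admissible $\psi$. The exponent $1$ here is tight and is realised by the extremal $\psi(t)=-\ln t$, for which $\|\psi\|_{\BMO([0,1])}=1$ and $|\{\psi>1+\lambda\}|=e^{-1-\lambda}$. This bound is built into the Bellman function $b_1$ (cf.~\cite{SlavinVasyunin2011, Vasyunin2003}), so Lemma~\ref{SimpleMajorant} makes it accessible already on the martingale side.

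Partitioning $\R$ into unit intervals $[k,k+1)$, $k\in\mathbb{Z}$, and using the inclusion $[k,k+1]\subset[k-2,k+2]$, I would estimate
\begin{equation*}
\int_0^1 h(\psi(t))\,dt \leq \sum_{k\in\mathbb{Z}}\bigl|\{\psi\in[k,k+1)\}\bigr|\sup_{u\in[k-2,k+2]}h(u) \leq C_1 \sum_{k\in\mathbb{Z}} e^{-|k-x_0|}\sup_{u\in[k-2,k+2]}h(u),
\end{equation*}
where the second inequality applies John--Nirenberg on each level set. The triangle inequality $|k-x_0|\geq|k|-|x_0|$ then yields the upper bound $C_1\, e^{|x_0|}\sum_k e^{-|k|}\sup_{[k-2,k+2]}h$, finite by~\eqref{ExtremeSummability} and depending on~$\varphi$ only through $\varphi_0$, as required.

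The principal obstacle will be invoking John--Nirenberg with the \emph{sharp} exponent~$1$: any weaker rate $c<1$ does not suffice, since~\eqref{ExtremeSummability} is tight at rate one (for instance $\sup_{[k-2,k+2]}h\sim e^{|k|}/k^2$ obeys~\eqref{ExtremeSummability} yet makes $\sum_k e^{-c|k|}\sup_{[k-2,k+2]}h$ divergent for every $c<1$). This sharp rate, however, is precisely what the Bellman function $b_1$ encodes, so no machinery beyond Theorem~\ref{Embedding} and Lemma~\ref{SimpleMajorant} is really needed.
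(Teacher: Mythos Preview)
Your argument is correct and close in spirit to the paper's, but the execution differs. The paper's proof is a two-line citation chain: it quotes Theorem~6.1.2 of~\cite{ISVZ2018} to assert that the function~$b_1$ built from~$h$ is finite under~\eqref{ExtremeSummability}, then invokes Lemma~\ref{SimpleMajorant} (so~$\Bell\le b_1$) and Remark~\ref{FinitenessRemark} to conclude. You instead bypass the Bellman function~$b_1$ entirely: you transfer to~$\psi=\varphi_\infty^*$ via Theorem~\ref{Embedding} and then estimate~$\int_0^1 h(\psi)$ by hand using the sharp weak-type John--Nirenberg inequality~\eqref{tail_BMO} on each unit level set. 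In effect you are reproving, for this particular purpose, the finiteness statement the paper outsources to~\cite{ISVZ2018}.

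What your route buys is self-containment and a cleaner treatment of the general (non-simple) martingale case, since Theorem~\ref{Embedding} applies to arbitrary~$\varphi$ whereas~$\Bell$ is defined through simple martingales. What the paper's route buys is brevity and structural clarity: everything is pushed into the single comparison~$\Bell\le b_1$, with no tail computation visible. Two small corrections to your write-up: the sharp exponent-$1$ weak-type estimate you need is the one in~\eqref{tail_BMO} from~\cite{VasyuninVolberg2014}, not~\cite{SlavinVasyunin2011} or~\cite{Vasyunin2003} (those treat the exponential-moment and~$A_p$ problems); and your closing sentence overstates matters---Lemma~\ref{SimpleMajorant} alone does not deliver the tail bound, you genuinely rely on~\eqref{tail_BMO} as an additional input.
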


This corollary will be proved in Section~\ref{S2}. It is sharp in certain sense. For example, one may 
construct a~$C^3$-smooth function~$h$ such that~$h''' \geq 0$ and~$h(x) = e^x/x$ when~$x$ is 
sufficiently large. Theorem~\ref{CoincidenceTheorem} below then says~$\Bell(x,y,z) = b_{\sqrt{1-z^2}}(x,y)$ 
if both these functions are constructed for~$f := h$. However, one may see that with this function~$h$ 
the Bellman function~$b_1$ is infinite since the integral~$\int_0^1 h(\psi)$ diverges if one plugs
$\psi(t) = -\log t$ into~\eqref{SmallBellman}  (the function~$\log t$ has~$\BMO$-norm equal to one).

As we have said, the inequality in Lemma~\ref{SimpleMajorant} often turns into equality.
\begin{Th}\label{CoincidenceTheorem}
Assume~$f$ satisfies the standard requirements and either~$f''$ is monotone or $f''$ increases up to some 
point and then decreases. Then\textup,~$\Bell(x,y,z) = b_{\sqrt{1-z^2}}(x,y)$ for all~$(x,y,z)\in\Omega$.
\end{Th}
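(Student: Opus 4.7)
By Lemma~\ref{SimpleMajorant} we already have $\Bell(x,y,z)\le b_{\sqrt{1-z^2}}(x,y)$, so the content of the theorem is the reverse inequality. Set $\eps:=\sqrt{1-z^2}$. The task reduces to a realisation problem: for every $(x,y)\in\omega_\eps$ and every $\delta>0$, construct a simple martingale $\varphi$ with $\E\varphi_\infty=x$, $\E\varphi_\infty^2=y$, $\|S\varphi\|_{L_\infty}\le\eps$, and $\E f(\varphi_\infty)\ge b_\eps(x,y)-\delta$. For such a $\varphi$ we have $(S\varphi)^2+z^2\le 1$ almost surely, hence $H_f(\varphi_\infty,(S\varphi)^2+z^2)=f(\varphi_\infty)$, and definition~\eqref{Bellman} yields $\Bell(x,y,z)\ge \E f(\varphi_\infty)\ge b_\eps(x,y)-\delta$; letting $\delta\to 0$ completes the argument.

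To perform the realisation I would exploit the explicit description of $b_\eps$ developed in~\cite{SlavinVasyunin2011,IOSVZ2016,ISVZ2018} under the present hypothesis on $f''$. In that theory $\omega_\eps$ is foliated by extremal leaves of a bounded number of combinatorial types --- chords joining two boundary points of $\{y=x^2\}$, at most one tangent to this parabola, and possibly chord--tangent concatenations --- along which $b_\eps$ is affine; every $(x,y)\in\omega_\eps$ lies on exactly one such leaf, and the monotonicity or unimodality of $f''$ is what keeps the list of types short. For a given starting point $(x,y)$ I would build an adaptive discrete filtration step by step: at stage $n$, every atom $\sigma$ carrying the martingale value $(\varphi_n, M_n^2)\in\omega_\eps$ is split along the leaf through that point into two sub-atoms whose weights are tuned so that the increment $(\varphi_{n+1}-\varphi_n)^2$ fits within the current path-wise budget $\eps^2-\sum_{k<n}(\varphi_{k+1}-\varphi_k)^2$, while the two post-split values stay on the prescribed leaf. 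Truncating this recursion after $N$ stages yields a sequence of simple martingales $\varphi^{(N)}$ with $\|S\varphi^{(N)}\|_{L_\infty}\le\eps$ and $\E f(\varphi^{(N)}_\infty)\to b_\eps(x,y)$; on tangent leaves (where the underlying $\BMO$-extremal $\psi$ is unbounded) the convergence is controlled by the exponential integrability hypothesis in Definition~\ref{DefStReq}.

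The main obstacle is the mismatch between the path-wise constraint $\|S\varphi\|_{L_\infty}\le\eps$ and the averaged constraint $\|\psi\|_{\BMO}\le\eps$: Theorem~\ref{Embedding} is one-directional, and a naive single-step realisation of a two-atom $\BMO$-extremal generically violates the square-function bound. Indeed, the split $x\mapsto\{x_1,x_2\}$ with weights $p_1,p_2$ produces increments $(x-x_i)^2=(1-p_i)^2(x_1-x_2)^2$, which exceed $\tfrac14(x_1-x_2)^2=\|\psi\|_{\BMO}^2$ as soon as the split is asymmetric, so the one-step embedding saturates both constraints simultaneously only at the midpoint. The remedy is the adaptive multi-step scheme above, and tracking the residual budget $\sqrt{\eps^2-(S_n\varphi)^2}$ is precisely the role of the auxiliary coordinate $z$ in the definition of $\Bell$. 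The monotonicity or unimodality assumption on $f''$ ensures that the extremal foliation closes cleanly under this construction; without it, as in the general case treated in~\cite{ISVZ2018}, the foliation can alternate between chord and tangent regimes arbitrarily many times, and the path-wise realisation loses transparency.
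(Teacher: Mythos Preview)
Your strategy is the right one and coincides with the paper's: prove the reverse inequality by realising, for each leaf of the known foliation of $b_\eps$, a sequence of splittings obeying the rules~\eqref{SplittingRules} whose accumulated value approaches $b_\eps$. You also correctly diagnose the central difficulty --- a one-shot split along a chord of $\omega_\eps$ respects the square-function budget only at the midpoint --- and correctly identify the $z$-coordinate as the bookkeeping device for the residual budget.

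What is missing is the execution. ``Build an adaptive discrete filtration step by step \dots\ with weights tuned so that the increment fits the budget'' is the problem statement, not its solution; the proposal never specifies a single split, never writes down a recursion, and never proves convergence to $b_\eps$. The paper supplies exactly these missing pieces, and they are the substance of the argument. Two limiting constructions do all the work. The first (Lemma~\ref{AlongParabola}) moves along the parabola $y-x^2=\mathrm{const}$ by a chain of $N$ splits, each sending an $\eps_n/(t+\eps_n)$ fraction forward and a $t/(t+\eps_n)$ fraction down to the lower boundary; passing $N\to\infty$ produces the exponential integral that matches~\eqref{eq270104} or~\eqref{eq270102} on tangent leaves. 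The second (the lemma behind Remark~\ref{PashaArbitraryPoint}) moves along a straight segment to a boundary point $(t,t^2)$ by $N$ geometric splits with ratio $\lambda^{1/N}$, the point being that the cumulative square-function increment is $O(1/N)$ and vanishes in the limit; this handles non-midpoint locations on a chord (Theorem~\ref{ThOnChord}) and the descent from the upper boundary along a tangent line (Theorem~\ref{ThTanL}). The three regimes --- pure left tangents, pure right tangents, cup flanked by tangents --- are then dispatched by combining these two lemmas (Theorems~\ref{ThTanL}, \ref{ThRTangEps}, \ref{CupTheorem}).

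A smaller point: rather than literally constructing martingales and computing $\E f(\varphi_\infty)$, the paper applies the main inequality (iii) of Lemma~\ref{PropertiesOfBellmanFunction} directly to $\Bell$, obtaining functional inequalities on $\Bell$ whose right-hand side is then matched against the explicit formulas for $b_\eps$. This is formally equivalent to your martingale picture but avoids tracking filtrations and makes the limit passages cleaner.
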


Theorem~\ref{CoincidenceTheorem} was also stated in~\cite{SVZZ2019}, but was not proved. 
Its proof is presented in Subsection~\ref{s330}. Note that particular choices~$f(t) = e^{\lambda t}$ 
and~$f(t) = |t|^p$,~$1\leq p \leq 2$ (this function does not satisfy the standard requirements, however, 
we will be able to cope with this difficulty), fit the assumptions of Theorem~\ref{CoincidenceTheorem}. 
The corresponding functions~$b_{\eps}$ were computed in~\cite{SlavinVasyunin2011} 
and~\cite{SlavinVasyunin2012} respectively. These results will lead us to the corollaries below.

\begin{Cor}\label{p-momentCorollary} 
The optimal constant~$c_p$ in the inequality
\begin{equation}\label{c_p}
\|\varphi_\infty - \varphi_0\|_{L_p}\leq c_p\|S\varphi\|_{L_{\infty}}
\end{equation}
equals~$1$ when~$1 \leq p \leq 2$. 
\end{Cor}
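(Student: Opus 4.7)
The plan is to prove the two inequalities $c_p\le 1$ and $c_p\ge 1$ separately; both are short consequences of material already set up in the paper, and no use of the full Bellman function $\Bell$ or of Theorem~\ref{CoincidenceTheorem} is required.

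For the upper bound, I would normalize so that $\|S\varphi\|_{L_\infty}=1$ and, replacing $\varphi$ by $\varphi-\varphi_0$ (which preserves the square function and the left-hand side of \eqref{c_p}), assume further that $\varphi_0=0$. Then Theorem~\ref{Embedding} yields $\|\varphi_\infty^*\|_{\BMO([0,1])}\le 1$, and since $\int_0^1\varphi_\infty^*=\E\varphi_\infty=\varphi_0=0$, taking $J=[0,1]$ in~\eqref{BMOnormDefinition} gives
\begin{equation*}
\int\limits_0^1(\varphi_\infty^*)^2\le \|\varphi_\infty^*\|_{\BMO([0,1])}^2\le 1.
\end{equation*}
Hölder's inequality with exponent $2/p\ge 1$ then yields $\int_0^1 |\varphi_\infty^*|^p \le \bigl(\int_0^1 (\varphi_\infty^*)^2\bigr)^{p/2}\le 1$. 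Since $\varphi_\infty$ and $\varphi_\infty^*$ are equidistributed, this last integral equals $\E|\varphi_\infty|^p=\|\varphi_\infty-\varphi_0\|_{L_p}^p$, so $c_p\le 1$.

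For sharpness, I would exhibit the simplest possible two-valued martingale: take $\F_0=\{\varnothing,X\}$ and $\F_1$ a partition into two sets of equal probability, and set $\varphi_0=0$ and $\varphi_1=\pm 1$ on the two atoms, stopped thereafter. Then $S\varphi\equiv 1$ and $|\varphi_\infty-\varphi_0|\equiv 1$, so $\|\varphi_\infty-\varphi_0\|_{L_p}=1=\|S\varphi\|_{L_\infty}$, forcing $c_p\ge 1$.

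There is no real obstacle: the corollary is an almost immediate consequence of the sharp embedding of Theorem~\ref{Embedding} together with the $L_2$-based form of the $\BMO$ seminorm, which trivializes when specialized to $J=[0,1]$ for a mean-zero function. The Bellman function machinery would be needed only if one wanted the refined statement that tracks $\E\varphi_\infty^2$ or $\varphi_0$ (as one obtains by combining Theorem~\ref{CoincidenceTheorem} with the explicit $b_\eps$ from \cite{SlavinVasyunin2012}), but for the single constant $c_p$ in \eqref{c_p} the elementary argument above suffices.
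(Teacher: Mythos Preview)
Your proof is correct and takes a genuinely more elementary route than the paper's. The paper derives $c_p=1$ by invoking Theorem~\ref{CupTheorem} to identify $\Bell(0,y,z)=b_{\sqrt{1-z^2}}(0,y)$ for the boundary datum $f(t)=|t|^p$, and then appeals to the explicit formula $b_1(0,y)=y^{p/2}$ from~\cite{SlavinVasyunin2012}; this computes the entire Bellman function, with the constant $c_p$ falling out as a by-product. Your argument bypasses all of this: the upper bound is just the $L_2$ orthogonality $\E(\varphi_\infty-\varphi_0)^2=\E(S\varphi)^2\le\|S\varphi\|_{L_\infty}^2$ followed by H\"older for $p\le 2$. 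In fact you do not even need the detour through Theorem~\ref{Embedding} and the $\BMO$ seminorm: since $\varphi_\infty^*$ is equidistributed with $\varphi_\infty$, the inequality $\int_0^1(\varphi_\infty^*)^2\le 1$ that you extract from the $J=[0,1]$ case of~\eqref{BMOnormDefinition} is literally the same statement as $\E\varphi_\infty^2\le 1$, which is already~\eqref{Orthogonality} with $\tau=0$. What the paper's heavier approach buys is the exact value of $\Bell(x,y,z)$ at every point of~$\Omega$, not merely its supremum over the line $x=0$; your argument gives only the latter, which is all that~\eqref{c_p} asks for. Your sharpness example is the same one the paper uses elsewhere (e.g., at the end of the proof of Theorem~\ref{Embedding}).
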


\begin{Cor}\label{exponentialmomentCorollary}
The optimal constant~$C(\eps)$ in the inequality
\begin{equation}\label{Exponential}
\E e^{\varphi-\varphi_0} \leq C(\eps), \quad S\varphi \leq \eps.
\end{equation}
equals~$\frac{e^{-\eps}}{1-\eps}$ when~$\eps < 1$.
\end{Cor}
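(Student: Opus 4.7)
The plan is to apply Theorem~\ref{CoincidenceTheorem} to the exponential $f(t) = e^{\eps t}$ and then invoke the explicit value of the $\BMO$-side Bellman function obtained in~\cite{SlavinVasyunin2011}.

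Since $S\varphi$ is invariant under additive constants, I may assume $\varphi_0 = 0$, so that $C(\eps) = \sup\{\E e^{\varphi_\infty}\mid S\varphi \leq \eps\}$. Rescaling $\tilde\varphi := \varphi/\eps$ turns the constraint into $S\tilde\varphi \leq 1$ and the integrand into $e^{\eps t}$, so
\begin{equation*}
C(\eps) = \sup\{\E e^{\eps\tilde\varphi_\infty}\mid \tilde\varphi_0 = 0,\ S\tilde\varphi \leq 1\} = \sup_{y}\Bell(0,y,0),
\end{equation*}
where $\Bell$ is the Bellman function~\eqref{Bellman} built from $f(t) = e^{\eps t}$.

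I would then check that this $f$ meets the hypotheses of Theorem~\ref{CoincidenceTheorem}. It is smooth and positive, and $f''(t) = \eps^2 e^{\eps t}$ is monotone. Its third distributional derivative is the signed measure $\eps^3 e^{\eps t}\,dt$, of constant sign, and for any $\tilde\eps$ in the non-empty interval $(1, 1/\eps)$ one has $\int_{\R} e^{-|t|/\tilde\eps}\eps^3 e^{\eps t}\,dt < \infty$. Hence the standard requirements of Definition~\ref{DefStReq} hold---and the non-emptiness of $(1, 1/\eps)$ is precisely where the hypothesis $\eps < 1$ enters. Theorem~\ref{CoincidenceTheorem} then yields $\Bell(0, y, 0) = b_{1}(0, y)$ for every admissible $y$, whence
\begin{equation*}
C(\eps) = \sup_{y}b_{1}(0,y) = \sup\Big\{\int_0^1 e^{\eps\psi}\;\Big|\;\int_0^1 \psi = 0,\ \|\psi\|_{\BMO([0,1])}\leq 1\Big\}.
\end{equation*}

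The last step is to identify this $\BMO$-side supremum with $\frac{e^{-\eps}}{1-\eps}$, which is exactly the Bellman computation carried out in~\cite{SlavinVasyunin2011}. Sharpness is visible already on the test function $\psi(t) = -\log t - 1$ on $[0,1]$: it has mean zero and $\BMO([0,1])$-norm one, and $\int_0^1 e^{\eps\psi}\,dt = e^{-\eps}\int_0^1 t^{-\eps}\,dt = \frac{e^{-\eps}}{1-\eps}$ for $\eps < 1$. The main obstacle here is the regularity check: for $f(t) = e^t$ itself the integrability condition in Definition~\ref{DefStReq} fails, and the preliminary rescaling by $1/\eps$ is exactly what lowers the exponent to $e^{\eps t}$ and restores admissibility with some $\tilde\eps > 1$.
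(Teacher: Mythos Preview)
Your proposal is correct and follows essentially the same route as the paper: both reduce to the Bellman function~$\Bell$ built from $f(t)=e^{\eps t}$, invoke Theorem~\ref{CoincidenceTheorem} to identify~$\Bell(0,y,0)=b_1(0,y)$, and then cite~\cite{SlavinVasyunin2011} (or Theorem~\ref{TheoremLeftTangents}) for the value~$\frac{e^{-\eps}}{1-\eps}$. Your write-up adds two details the paper leaves implicit---the explicit verification that $f(t)=e^{\eps t}$ satisfies the standard requirements (and that the integrability condition forces~$\eps<1$), and the explicit extremizer~$\psi(t)=-\log t-1$---but the argument is the same.
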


Sometimes the inequality in Lemma~\ref{SimpleMajorant} is strict on a subdomain of $\Omega$. 
We present the following example corresponding to the choice~$f(t) = \chi_{_{[0,\infty)}}(t)$. 
Note that this function does not fulfill the standard requirements (however, this is not the 
reason for failure of the equality between the Bellman functions; we consider this example 
since it leads to sharp constants in the inequality~\eqref{WeakJNNonSharp}). In this case, 
the function~$b_{\varepsilon}$ was computed in \cite{VasyuninVolberg2014}. 
The domain $\omega_{\varepsilon}$ is split into four parts (see Figure~\ref{fig_Domains_D_eps})
\begin{equation}\label{FlatSplitting}
\begin{aligned}
 D_1^{\eps} &= \{ (x,y) \in \omega_{\eps} \mid  y \ge 2\varepsilon x,  \; x \ge \eps\} 
\cup  \{ (x,y) \in \omega_{\eps} \mid  y \le 2\varepsilon x\};
\\
D_2^{\eps} &=\{(x,y)\in \omega_{\eps} \mid |x|\leq \eps,\; y \geq 2\eps|x|\};
\\
D_3^{\eps} &=\{(x,y)\in \omega_{\eps} \mid y \leq -2 \eps x\};
\\
D_4^{\eps} &=\{(x,y)\in \omega_{\eps} \mid x\leq -\eps,\; y \geq -2 \eps x\},
\end{aligned}
\end{equation}
\begin{figure}
\begin{center}
\includegraphics[width=0.5\columnwidth]{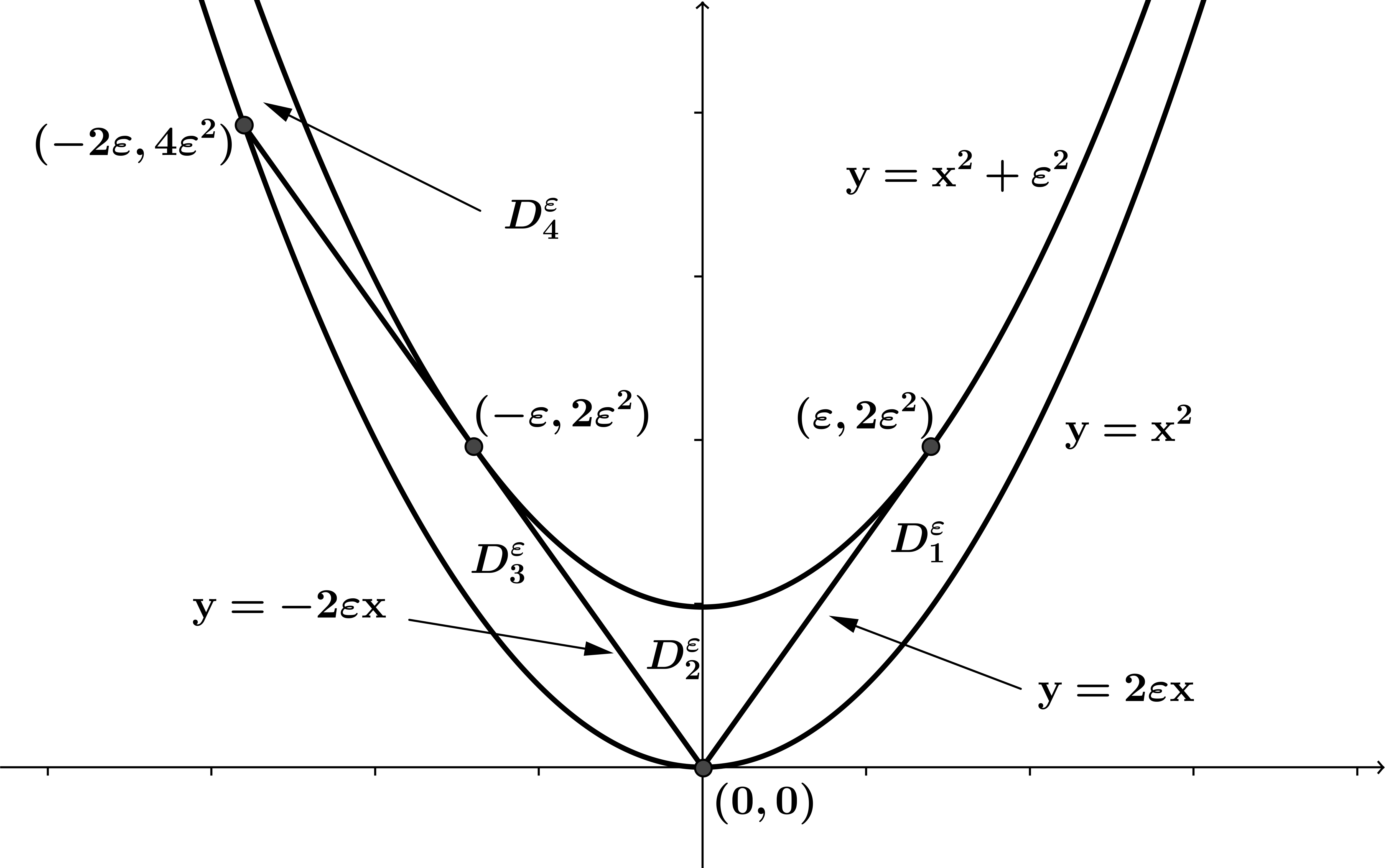}
\caption{Domains $D_j^\eps$}
\label{fig_Domains_D_eps}
\end{center}
\end{figure}
and the function is defined by the formula:
\begin{equation}\label{b_eps_def}
b_{\eps}(x,y) = 
\begin{cases}
\rule{0pt}{15pt}1,\quad & (x,y) \in D_1^{\eps}
\\
\rule{0pt}{15pt}1-\frac{y-2\eps x}{8\eps^2}, \quad &(x,y) \in D_2^{\eps}
\\
\rule{0pt}{15pt}1-\frac{x^2}{y}, \quad &(x,y) \in D_3^{\eps}
\\
\frac{e}{2}\left(1-\sqrt{1-\frac{y-x^2}{\eps^2}}\right)e^{\frac{x}\eps+\sqrt{1-\frac{y-x^2}{\eps^2}}}, 
\quad & (x,y) \in D_4^{\eps}.
\end{cases}
\end{equation}

In Section~\ref{S4}, the function $\Bell$ will be computed on the upper boundary of~$\Omega$, 
namely, we will identify the restriction of~$\Bell$ to\footnote{The subscript~$\mathrm{R}$ in 
the formula below designates the ``roof'' of the domain~$\Omega$.}
\begin{equation}\label{OmegaR}
\Omega_R = \Omega \cap \{z = \sqrt{1 - y + x^2}\}.
\end{equation} 
The set~$\Omega_R$ naturally splits into four parts, each of which is projected onto the corresponding 
domain in~\eqref{FlatSplitting}.

\begin{Th}\label{th_bellman_is_less_b_eps}
Let $f(t) = \chi_{_{[0, \infty)}}(t),$  $t\in\mathbb{R}$. The equality
\begin{equation}
\Bell(x,y,\sqrt{1-y+x^2}) = b_{\sqrt{y-x^2}}(x,y) 
\end{equation} 
holds true whenever~$(x,y)\in D^{\sqrt{y-x^2}}_j$ and $j=1,3,4$. If~$(x,y)\in D_2^{\sqrt{y-x^2}},$ we have 
\begin{equation}
\Bell(x,y, \sqrt{1-y + x^2}) = 1-\frac{\sqrt{1-\ttt^2}-\ttt}{2\sqrt{2}}e^{-\arcsin \ttt-\frac{\pi}{4}}, 
\quad \text{ where }  \ttt = \ttt(x,y) = \frac{x}{\sqrt{2(y-x^2)}}.
\end{equation}
\end{Th}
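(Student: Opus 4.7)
The proof has two halves: an upper bound for $\Bell$ restricted to $\Omega_R$ and a matching lower bound via explicit constructions. A key preliminary observation is that on $\Omega_R$ one has $(S\varphi)^2 \leq 1 - z^2 = y - x^2$ while also $\E(S\varphi)^2 = y - x^2$ by the orthogonality of martingale increments, cf.~\eqref{Orthogonality}. Hence $S\varphi = \sqrt{y - x^2} =: \eps$ almost surely, and each individual increment obeys $|\Delta\varphi_n| \leq \eps$. Moreover, a two-point split $x \mapsto (x + a, x - b)$ with $a, b \leq \eps$ and $qa = pb$ keeps both successor nodes on $\Omega_R$ with new parameter $\eps' = \sqrt{\eps^2 - ab}$. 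This reduces the problem to a two-parameter problem in the $(x, \eps)$-plane, in which $D_1^\eps$ is the half-plane $\{x \geq \eps\}$, $D_2^\eps$ is the strip $\{|x| \leq \eps\}$, $D_3^\eps$ collapses to the ray $\{x = -\eps\}$, and $D_4^\eps$ is $\{x \leq -\eps\}$.

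\textbf{Upper bounds.} For $(x, y) \in D_j^\eps \cap \Omega_R$ with $j = 1, 3, 4$, the inequality $\Bell(x, y, z) \leq b_\eps(x, y)$ is immediate from Lemma~\ref{SimpleMajorant} since $\sqrt{1 - z^2} = \eps$ on $\Omega_R$; plugging into~\eqref{b_eps_def} recovers exactly the value claimed by the theorem. For $D_2^\eps \cap \Omega_R$ Lemma~\ref{SimpleMajorant} is not sharp, so I would produce a fine supersolution $G$ on a neighbourhood $U$ of $D_2 \cap \Omega_R$ inside $\Omega$. On $\Omega_R$ the ansatz is the $\arcsin$-formula from the theorem; off the roof, $G$ is extended using the natural foliation by extremal split directions. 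The verification has two parts: (i) $G(x, x^2, z) \geq \chi_{_{[0,\infty)}}(x)$ on the lower face of $\Omega$, and (ii) local concavity, $G(x, y, z) \geq p G(x + a, y_+, z') + q G(x - b, y_-, z')$ for every admissible two-point split. A standard induction on the number of splits of a simple martingale then yields $\Bell \leq G$ throughout $U$.

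\textbf{Lower bounds and the main obstacle.} In $D_1$ the single symmetric split $x \mapsto x \pm \eps$ with probabilities $\tfrac12$, adapted to a two-step filtration, has $S\varphi = \eps$ and $\varphi_\infty \geq 0$ almost surely, hence attains the value $1$. The singleton $D_3 \cap \Omega_R$ is obtained as the limiting case of the $D_4$-construction at the boundary $x = -\eps$. In $D_4$ one cannot reach nonnegative values in a single step because each increment is capped by $\eps$, so the construction is iterative: at each stage a geometric fraction of the mass is advanced toward $0$, the portion that crosses into $\{x \geq 0\}$ is frozen with value $1$, and the remainder recurses in $D_4$ with slightly smaller $|x|/\eps$; summing the geometric series reproduces $\tfrac12 e^{1 + x/\eps}$. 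In $D_2$ an analogous iterated split traces a circular arc in the $(x, \eps)$-plane, which is the geometric source of the $\arcsin$ factor, and accumulating the contributions of the atoms that terminate along the arc reproduces the stated formula. The main difficulty is establishing local concavity of the candidate supersolution $G$ on $D_2$: the $\arcsin$-formula is not manifestly concave, and the check requires identifying the right off-roof extension and carrying out a careful Hessian computation in curvilinear coordinates adapted to the circular extremal trajectories, together with verification of the boundary inequality on the part of $\{y = x^2\}$ accessible from $D_2$. Matching the construction to $G$ across the interfaces $x = \pm \eps$ with $D_1$ and $D_4$ is then a gluing argument using the continuity of the formulas at those boundaries already verified at the level of numerical values.
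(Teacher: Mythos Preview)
Your overall architecture---upper bound via a supersolution, lower bound via explicit martingales---matches the paper, but there are two substantive divergences and one real gap.

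\textbf{Separate dynamics of $\Omega_R$.} You correctly observe $S\varphi=\eps$ a.s.\ on $\Omega_R$, but you then propose to extend the candidate $G$ \emph{off the roof} into $\Omega$ and verify concavity there. This is unnecessary. If $(x,y,z)\in\Omega_R$ is split according to~\eqref{SplittingRules}, then Lemma~\ref{SimpleGeometry} forces every $(x_j,y_j)$ onto the tangent line $y_j-y=2x(x_j-x)$, and hence every child $(x_j,y_j,z_j)$ stays on $\Omega_R$. The paper exploits this to reduce the problem entirely to a two-dimensional one on $\omega_1$ (the ``model problem'' on $\OmSmile$ together with the already-known $b_{\sqrt{y-x^2}}$ on $\omega_1\setminus\OmSmile$); no three-dimensional extension is ever constructed. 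Incidentally, your formula $\eps'=\sqrt{\eps^2-ab}$ is wrong: the two children have $\eps_+^2=\eps^2-a^2$ and $\eps_-^2=\eps^2-b^2$, which differ unless $a=b$.

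\textbf{Concavity verification.} You propose a Hessian computation in curvilinear coordinates. The paper instead introduces auxiliary linear interpolants $\GR(x,y,c)$, $\GL(x,y,c)$ along the extremal segments, parametrised by the level $c$, and shows $\partial_c\GR\ge0$, $\partial_c\GL\ge0$ (Lemma~\ref{VerifMainIneq}). This monotonicity-in-$c$ device bypasses any direct second-order computation and is the paper's main technical idea for the model problem.

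\textbf{The gap: gluing.} You write that matching across the interfaces $x=\pm\eps$ ``is then a gluing argument using the continuity of the formulas at those boundaries''. Continuity is far from sufficient. The nontrivial case is $(x,y)\in\OmSmile$ splitting so that a child $(\bar x,\bar y)$ lands in $\OmL$: one must verify the tangent-line inequality~\eqref{eq280101} with $G=\Bs$ on the left and $G=b_{\sqrt{\bar y-\bar x^2}}$ on the right. This reduces (after the substitution $x=\sin\alpha-\cos\alpha$, $x-\bar x=\sin\gamma$) to showing that a certain explicit function $F(\alpha)$ is $\le0$ on $[0,\gamma]$, proved in the paper by checking $F''\ge0$ and then $F(0)\le0$, $F(\gamma)\le0$ via two further one-variable estimates. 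This is the single hardest computation in Section~\ref{S4}, and your proposal does not account for it.

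For the lower bound on $D_2^\eps$, your iterated-split picture is correct in spirit; the paper packages it as a differential inequality $\tfrac{d_-}{dt}[t\,\rrr(t)]\ge1$ for $\rrr(t)=\Bs(\uuu(t),\vvv(t))$ along the extremal parametrisation, rather than summing a series.
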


\begin{Cor}\label{cor_sharp_tail}
The best possible constant $c_1$ in~\eqref{WeakJNNonSharp} equals~$1$. The optimal constant~$c$ 
in the inequality 
\begin{equation}\label{tail_sq}
P(\varphi_\infty-\varphi_0\geq\lambda)\leq c e^{-\frac{\lambda}{\ \|S\varphi\|_{L_{\infty}}}},\qquad\lambda>0,
\end{equation}
equals~$\frac{e}{2}$.
\end{Cor}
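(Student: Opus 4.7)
The strategy is to apply Theorem~\ref{th_bellman_is_less_b_eps} with $f = \chi_{_{[0,\infty)}}$ to tail probabilities via a shift. By rescaling and translation we may assume $\|S\varphi\|_{L_\infty}\le 1$ and $\varphi_0 = 0$. Fix $\lambda > 0$ and set $\tilde\varphi = \varphi - \lambda$, so that $\tilde\varphi_0 = -\lambda$, $S\tilde\varphi = S\varphi \le 1$, $\E\tilde\varphi_\infty^2 \in [\lambda^2, \lambda^2 + 1]$ by~\eqref{Orthogonality}, and
\[
P(\varphi_\infty - \varphi_0 \ge \lambda) \;=\; \E\,\chi_{_{[0,\infty)}}(\tilde\varphi_\infty).
\]

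For the upper bound, combining the definition of $\Bell$ for simple martingales with Lemma~\ref{SimpleMajorant} reduces the task to showing $\sup_{y \in [\lambda^2, \lambda^2 + 1]} b_1(-\lambda, y) \le \tfrac{e}{2} e^{-\lambda}$, with $b_1$ taken from~\eqref{b_eps_def}. For $\lambda \ge 1$ the curve $y \mapsto (-\lambda, y)$ traverses $D_3^1 \cup D_4^1$: on $D_3^1$ the expression $1 - \lambda^2/y$ is manifestly increasing in $y$, and on $D_4^1$ a short computation (using $\partial_u[(1-u)e^u] = -u e^u$ together with $du/dy = -1/(2u)$ for $u = \sqrt{1 - y + \lambda^2}$) yields $\partial_y b_1 = \tfrac{e}{4} e^{-\lambda + u} > 0$, so the supremum is attained at $y = \lambda^2 + 1$ and equals $\tfrac{e}{2} e^{-\lambda}$. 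For $0 < \lambda < 1$ the analogous analysis over $D_2^1 \cup D_3^1$ produces $\sup_y b_1(-\lambda, y) = 1 - \lambda/2$, and an elementary convexity check (the function $\lambda \mapsto \tfrac{e}{2}e^{-\lambda} - (1 - \lambda/2)$ vanishes with its first derivative at $\lambda = 1$ and has positive second derivative) still dominates this by $\tfrac{e}{2}e^{-\lambda}$. Combined with the standard approximation of general martingales by simple ones already invoked in the paper, this gives~\eqref{tail_sq} with $c = e/2$.

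For sharpness, Theorem~\ref{th_bellman_is_less_b_eps} provides the matching lower bound on the roof: the triple $(-\lambda, \lambda^2+1, 0)$ lies in $\Omega_R$ (since $\sqrt{1 - y + x^2} = 0$), and for $\lambda \ge 1$ its projection $(-\lambda, \lambda^2+1)$ lies in $D_4^1$, so
\[
\Bell(-\lambda, \lambda^2+1, 0) \;=\; b_1(-\lambda, \lambda^2+1) \;=\; \tfrac{e}{2}e^{-\lambda}.
\]
Consequently there exist simple martingales $\tilde\varphi$ for which $P(\tilde\varphi_\infty \ge 0)$ approaches $\tfrac{e}{2}e^{-\lambda}$; shifting back confirms the optimality of $c = e/2$. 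Because the ratio $P(\varphi_\infty - \varphi_0 \ge \lambda)\,e^{\lambda}$ saturates $e/2$ for every $\lambda \ge 1$, any exponent $c_1 > 1$ in~\eqref{WeakJNNonSharp} would force $c_2 = +\infty$, so $c_1 = 1$ is the best possible. The principal obstacle I foresee is purely combinatorial bookkeeping: correctly locating $(-\lambda, y)$ in the piecewise domains $D_j^1$ as $\lambda$ and $y$ vary, and using the piecewise monotonicity of $b_1$ in $y$ to pin the one-variable supremum to the roof, where Theorem~\ref{th_bellman_is_less_b_eps} upgrades the majorant of Lemma~\ref{SimpleMajorant} to an equality.
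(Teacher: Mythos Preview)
Your proof is correct and follows essentially the same route as the paper. Both arguments reduce via translation and scaling to estimating $\sup e^{-x}\,b_1(x,y)$ through Lemma~\ref{SimpleMajorant}, and both use the explicit formula for $b_1$ on the domains $D_j^1$; the paper estimates $e^{-x}b_1(x,y)$ directly on each $D_j^1$, while you first optimize in $y$ at fixed $\lambda=-x$ and then check the one-variable inequality, which amounts to the same computation organized differently. Your treatment of the claim $c_1=1$ is actually a bit more complete: the paper's proof section exhibits sharpness only at the single point $(-1,2,0)$, whereas you correctly note (via Theorem~\ref{th_bellman_is_less_b_eps} on $D_4^1$) that $\Bell(-\lambda,\lambda^2+1,0)=\tfrac{e}{2}e^{-\lambda}$ for every $\lambda\ge 1$, which is what is really needed to rule out any exponent $c_1>1$.
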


Note that the sharp constant~$c$ in the weak type form of the John--Nirenberg inequality
\begin{equation}\label{tail_BMO}
\Big|\big\{t\in[0,1]\ \big|\ \psi(t)-\int_0^1\psi>\lambda\big\}\Big|\leq 
c e^{-\frac{\lambda}{\|\psi\|_{\BMO[0,1]}}}\,,\qquad\lambda>0,
\end{equation}
also equals $\frac{e}{2}$, as it was shown in~\cite{VasyuninVolberg2014}. Even though for this choice 
of~$f$ the inequality in Lemma~\ref{SimpleMajorant} is strict at some points of~$\Omega$, the sharp 
constants in the tail estimates~\eqref{tail_sq} and~\eqref{tail_BMO} for the considered problems coincide.

Though the square function is a very common martingale operator, there are less sharp inequalities 
known about it than about the martingale transform or the maximal function. Even the expression for 
its~$L_p\to L_p$ norm is known only in the range~$p \in (1,2]$ (and in fact, is due to Burkholder 
in~\cite{Burkholder1984}, see Section~$8.3$ in~\cite{Osekowski2012}). The sharp constant in the weak type~$(1,1)$ inequality was found by Cox 
in~\cite{Cox1982} (see also~\cite{Osekowski2014StatisticLetters} for another approach 
and~\cite{Osekowski2009} and~\cite{HIV2020} for related results) while other weak type constants 
are unknown. Sharp inequalities for various special classes of martingales (conditionally symmetric 
martingales, continuous path martingales, etc.) may be found in~\cite{Osekowski2014MAI} and~\cite{Wang1991}. 
We also mention the article~\cite{Osekowski2013}, where questions similar to those considered in the 
present paper are studied in the dyadic setting (namely, that paper studies the distribution of~$S\varphi$ 
under conditions~$\varphi \in L_\infty$ and~$\varphi \in \BMO$ in the dyadic setting). The reader may find 
many interesting sharp inequalities involving the square function in the 8th chapter of~\cite{Osekowski2012}.

In Section~\ref{S2} we study simple properties of the function~$\Bell$ and prove 
Lemma~\ref{SimpleMajorant}, Corollary~\ref{ExtremeSummabilityCor}, and Theorem~\ref{Embedding}. 
Section~\ref{S3} contains the proofs of Theorem~\ref{CoincidenceTheorem}, Corollary~\ref{p-momentCorollary}, 
and Corollary~\ref{exponentialmomentCorollary}. Section~\ref{S4} is devoted to the proofs 
Theorem~\ref{th_bellman_is_less_b_eps} and Corollary~\ref{cor_sharp_tail}.

\section{Main inequality and proof of the majorization theorem}\label{S2}
The lemma we present below is a standard part of the Bellman function method. One may find 
a similar statement in~\cite{Osekowski2012}, see Chapter 8, Theorem~8.1. We provide a proof 
for two reasons: completeness and slight difference between the traditional notation and ours.

\begin{Le}\label{PropertiesOfBellmanFunction}
Let $f \geq 0$. 
\begin{itemize}
\item[\textup{(i)}] The function~$\Bell$ is non-negative on the domain~$\Omega$ defined 
by~\eqref{Omega} and equals~$-\infty$ outside it.
\item[\textup{(ii)}] The function~$\Bell$ satisfies the boundary condition~$\Bell(x,x^2,z) = f(x)$ 
when~$z\in [0,1]$. 
\item[\textup{(iii)}] The function~$\Bell$ satisfies the {\bf main inequality}
\begin{equation*}
\Bell(x,y,z) \geq \sum\limits_{j=1}^N\alpha_j\Bell(x_j,y_j,z_j),
\end{equation*}
whenever
\begin{equation}\label{SplittingRules}
\begin{gathered}
\sum\limits_{j=1}^N\alpha_j =1,\quad \alpha_{j} \in [0,1];
\\
\sum\limits_{j=1}^N\alpha_jx_j = x;\quad\qquad\sum\limits_{j=1}^N\alpha_jy_j = y;
\\
\forall j \quad z_j^2=z^2+(x_j-x)^2;\qquad (x_j,y_j,z_j)\in\Omega,\qquad (x,y,z)\in \Omega.
\end{gathered}
\end{equation}
\item[\textup{(iv)}] Let~$G\colon \Omega \to \mathbb{R}$ be a function that satisfies the 
same boundary condition as~$\Bell$ and also the main inequality\textup, that is
\begin{equation}\label{eq200102}
G(x,y,z) \geq \sum\limits_{j=1}^N\alpha_j G(x_j,y_j,z_j)
\end{equation}
whenever the points satisfy the splitting rules~\eqref{SplittingRules}. Then\textup,~$\Bell \leq G$ pointwise.
\end{itemize}
\end{Le}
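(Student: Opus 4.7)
The plan is to dispatch items (i) and (ii) by a direct calculation using the orthogonality identity, and then verify (iii) and (iv) by the standard technique of concatenating and decomposing simple martingales at the first step of the filtration; together these two parts express that $\Bell$ is the minimal solution of a Bellman-type functional inequality, which is exactly what will drive the rest of the paper.

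For (i) and (ii), I would first observe that by~\eqref{Orthogonality} applied at $\tau = 0$ any simple martingale $\varphi$ satisfies $\E(S\varphi)^2 = \E\varphi_\infty^2 - (\E\varphi_\infty)^2 = y - x^2$, while Jensen gives $y \ge x^2$. Hence if $(x,y,z) \notin \Omega$ then either $y < x^2$ (so no admissible $\varphi$ exists) or $y - x^2 > 1 - z^2$ (so $(S\varphi)^2 + z^2 > 1$ on a set of positive measure, forcing $H_f = -\infty$ there); in either case $\Bell = -\infty$. Inside $\Omega$, the constant martingale $\varphi \equiv x$ is admissible for $(x, x^2, z)$ and yields $\E H_f = f(x) \ge 0$, giving both (ii) and the nonnegativity asserted in (i).

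For (iii), given parameters $(x_j, y_j, z_j)$ and weights $\alpha_j$ satisfying~\eqref{SplittingRules}, I pick simple martingales $\varphi^{(j)}$ nearly optimal for $\Bell(x_j, y_j, z_j)$, that is, with $\E\varphi^{(j)}_\infty = x_j$, $\E(\varphi^{(j)}_\infty)^2 = y_j$, and $(S\varphi^{(j)})^2 \le 1 - z_j^2$ almost surely. Partitioning the probability space into sets $E_j$ of measure $\alpha_j$, I glue them by setting $\varphi_0 = x$, $\varphi_1 = x_j$ on $E_j$, and continuing as $\varphi^{(j)}$ thereafter. On $E_j$ the identity $(S\varphi)^2 = (x_j - x)^2 + (S\varphi^{(j)})^2$ together with the constraint $z_j^2 = z^2 + (x_j - x)^2$ gives $(S\varphi)^2 + z^2 = (S\varphi^{(j)})^2 + z_j^2 \le 1$; the first two moments match $x$ and $y$ by the linear splitting relations. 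Taking expectations and then the supremum over the $\varphi^{(j)}$ produces the main inequality.

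Part (iv) goes by induction on the number of atoms of the smallest $\sigma$-algebra on which the simple martingale $\varphi$ is fully determined. The base case is a constant martingale, directly covered by the boundary condition $G(x, x^2, z) = f(x)$. For the inductive step I may assume $(S\varphi)^2 + z^2 \le 1$ almost surely (otherwise the expectation of $H_f$ equals $-\infty$ and the bound is trivial); I let $E_1, \dots, E_N$ be the atoms of $\F_1$ with masses $\alpha_j$, and define $x_j = \varphi_1|_{E_j}$, $y_j = \E[\varphi_\infty^2 \mid \F_1]|_{E_j}$, and $z_j = \sqrt{z^2 + (x_j - x)^2}$. The continuation $\psi^{(j)}$ of $\varphi$ on $E_j$ is a simple martingale on a strictly shorter filtration satisfying $(S\psi^{(j)})^2 + z_j^2 = (S\varphi)^2 + z^2 \le 1$ on $E_j$, so by the inductive hypothesis $\E[H_f(\varphi_\infty, (S\varphi)^2 + z^2) \mid E_j] \le G(x_j, y_j, z_j)$; averaging with weights $\alpha_j$ and invoking~\eqref{eq200102} closes the induction. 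The only delicate point is the verification that each $(x_j, y_j, z_j) \in \Omega$, which reduces to the identity $y_j - x_j^2 = \E[(S\psi^{(j)})^2 \mid E_j]$ combined with the almost-sure bound on the square function restricted to $E_j$; the splitting rules themselves are automatic from the martingale property and the choice of $z_j$.
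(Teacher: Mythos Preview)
Your argument for (iii) matches the paper's concatenation approach, and your argument for (iv) is correct though methodologically different: you induct on the number of terminal atoms, whereas the paper observes directly that the process
\[
n \mapsto G\Big(\varphi_n,\ \E(\varphi_\infty^2\mid\F_n),\ \sqrt{(S\varphi_n)^2 + z^2}\Big)
\]
is a submartingale (each step being an instance of the main inequality~\eqref{eq200102}) and compares its initial value with its terminal value. Both routes are standard; the submartingale formulation is slightly cleaner because it avoids having to argue that the filtration can be reduced so that $\F_1$ genuinely refines $\F_0$, which your induction implicitly needs in order to make the atom count strictly decrease.

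There is, however, a real gap in your treatment of (i). You only exhibit the constant martingale, which witnesses $\Bell(x,x^2,z)\ge 0$ on the lower boundary $y=x^2$; this does not yet show $\Bell(x,y,z)\ge 0$ for a general point of $\Omega$ with $y>x^2$. The paper closes this by producing a single-step martingale
\[
\varphi_0=x,\qquad \varphi_1 = x\pm\sqrt{y-x^2}\ \text{with probability }\tfrac12,
\]
which has $\E\varphi_\infty=x$, $\E\varphi_\infty^2=y$, and $S\varphi=\sqrt{y-x^2}\le\sqrt{1-z^2}$, hence $\E H_f(\varphi_\infty,(S\varphi)^2+z^2)=\tfrac12\bigl(f(x-\sqrt{y-x^2})+f(x+\sqrt{y-x^2})\bigr)\ge 0$. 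Once you add this construction, (i) is complete. Note also that (ii) needs the converse inequality $\Bell(x,x^2,z)\le f(x)$, which follows because any admissible $\varphi$ with $\E\varphi_\infty^2=x^2=(\E\varphi_\infty)^2$ is necessarily constant; you allude to this but do not state it.
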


\begin{proof}[Proof of \textup{(i)}.] Due to the assumption~$f \geq 0$ and~\eqref{eq150201}, 
the assertion that~$\Bell(x,y,z)$ is non-negative means that there exists at least one 
martingale~$\varphi$ such that
\begin{equation}\label{eq150202}
\varphi_0 = x,\quad \E\varphi_{\infty}^2 = y,\quad \text{and}
\quad (S\varphi)^2 + z^2 \leq 1\quad \text{almost surely}.
\end{equation}
We first prove that the existence of such a martingale~$\varphi$ implies~$(x,y,z) \in \Omega$. 
The necessity of~$x^2 \leq y$ follows from the Cauchy--Schwarz inequality. The necessity 
of~$y \leq 1-z^2 + x^2$ is a consequence of the~$L_2$ orthogonality:
\begin{equation}\label{TestMartingaleCondition}
y - x^2 = \E\varphi_{\infty}^2 - (\E\varphi_\infty)^2 = \E (S\varphi)^2 \leq 1-z^2.
\end{equation}

Second, for any~$(x,y,z)\in\Omega$, we may construct a single step martingale~$\varphi$ by the formula
\begin{equation*}
\varphi_0=x, \quad \varphi_1 = 
\begin{cases}
x-\sqrt{y-x^2},\quad\text{with probability}\ \frac12;
\\
x+\sqrt{y-x^2},\quad\text{with probability}\ \frac12,\end{cases}\quad \varphi_n=\varphi_1,\quad n\geq1.
\end{equation*}
Then~$\E\varphi_\infty^2=\varphi_1^2 = y$ and~$S\varphi = \sqrt{y-x^2} \leq \sqrt{1-z^2}$ almost surely.

{\it Proof of \textup{(ii)}.} If~$y=x^2$, then any martingale~$\varphi$ that satisfies~\eqref{eq150202} 
is a constant. Thus, the set of martingales over which we optimize in~\eqref{Bellman} consists of 
a single martingale that equals~$x$ identically. For such a martingale,~$\E f(\varphi_\infty) = f(x)$. 
Therefore,~$\Bell(x,x^2,z) = f(x)$, whenever~$z\in [0,1]$.

{\it Proof of \textup{(iii)}.} Let~$\eta>0$ be a small parameter to be chosen later. Pick 
some~$\alpha_j$,~$x_j$,~$y_j$, and~$z_j$,~$j \in \{1,\ldots,N\}$, that satisfy~\eqref{SplittingRules}. 
By formula~\eqref{Bellman}, for every~$j\in \{1,\ldots,N\}$, there exists a simple martingale~$\varphi^j$ 
such that
\begin{equation}\label{eq200101}
\varphi_0^j = x_j,\quad \E(\varphi^j_{\infty})^2 = y_j,\quad(S\varphi^j)^2+z_j^2\leq1\text{ almost surely},
\end{equation}
and
\begin{equation}\label{eq150203}
\Bell(x_j,y_j,z_j) \leq \E H_f\Big(\varphi^j_{\infty},(S\varphi^j)^2 +z_j^2\Big) + \eta.
\end{equation}

We split the probability space~$X$ into~$N$ parts~$X_j$ such that~$P(X_j) = \alpha_j$ (recall that 
our probability space does not have atoms). We treat each~$(X_j,\Sigma|_{X_j},\frac{1}{\alpha_j}P|_{X_j})$ 
as an individual probability space and model the martingale~$\varphi^j$ on it (this equips these 
``small'' probability spaces with some filtrations). We construct the simple martingale~$\varphi$ 
as a concatenation of these martingales:
\begin{equation*}
\varphi_0=x,\quad\forall n\in\mathbb{N}\quad\varphi_n=\sum\limits_{j=1}^N\varphi^j_{n-1}\chi_{_{X_j}}.
\end{equation*}
The constructed process $\varphi$ is a martingale because $\varphi_0= \E \varphi_1$ due 
to~\eqref{SplittingRules} and~\eqref{eq200101}. Then,~$\E\varphi_{\infty}^2 = y$ and
\eq{
(S\varphi)^2 + z^2 = (S\varphi^j)^2 + z_j^2
} 
on~$X_j$ for any $j$ by~\eqref{SplittingRules}. Therefore, 
$$
\Bell(x,y,z) \geq \E H_f\big(\varphi_{\infty},(S\varphi)^2 + z^2\big) 
=\sum\limits_{j=1}^N\alpha_j \E H_f\big(\varphi_{\infty}^j,(S\varphi^j)^2 + z_j^2\big) 
\geq \sum\limits_{j=1}^N\alpha_j\Bell(x_j,y_j,z_j) -\eta
$$
by~\eqref{eq150203}. We complete the proof by making~$\eta$ arbitrarily small.

{\it Proof of \textup{(iv)}.} If we define
\begin{equation}\label{StoppedSquareFunction}
S\varphi_n = \Big(\sum_{m < n}(\varphi_{m+1}-\varphi_m)^2\Big)^{\frac12},
\end{equation}
then by the main inequality~\eqref{eq200102} the process
\begin{equation*}
\bigg\{G\Big(\varphi_n,\E(\varphi_{\infty}^2\mid \F_n), (S\varphi_n)^2 + z^2\Big)\bigg\}_n, 
\end{equation*}
is a submartingale, which stabilizes for $n$ sufficiently large, whenever~$\varphi$ is a simple 
martingale adapted to~$\F$. Then,
\begin{multline}
G(x,y,z) = \E G(\varphi_0,\E\varphi_{\infty}^2, z^2) \geq 
\lim\limits_{n\to \infty} \E G\Big(\varphi_n,\E(\varphi_{\infty}^2\mid \F_n), (S\varphi_n)^2 + z^2\Big) = 
\\ 
\E G(\varphi_{\infty},\varphi_{\infty}^2,(S\varphi)^2 + z^2) = \E f(\varphi_{\infty}),
\end{multline} 
whenever~$\varphi$ is a simple martingale such that~$x = \varphi_0$,~$y=\E\varphi_{\infty}^2$, 
and~$(S\varphi)^2 \leq 1-z^2$. Taking supremum over all such simple martingales, 
we obtain~$G(x,y,z) \geq \Bell(x,y,z)$.
\end{proof}

\begin{Rem}\label{FinitenessRemark}
Note that \textup{(iv)} says that if there exists some function~$G$ satisfying the requirements 
of this part\textup, then~$\E f(\varphi_\infty) \leq G(\varphi_0, \E\varphi_{\infty}^2, 0)$ for 
any simple~$\varphi$ with~$S\varphi \leq 1$. 
\end{Rem}

The boundary~$y-x^2 = 1-z^2$ is somehow special for our considerations. If the 
inequality~\eqref{TestMartingaleCondition} turns into equality, then~$S\varphi = \sqrt{1-z^2}$ 
almost surely. Thus, 
\eq{\label{UpperBoundary}
\Bell(x,x^2 + s^2,\sqrt{1-s^2}) = \sup\Set{\E f(\varphi)}{\varphi_0 = x,\;S\varphi=s\;\text{ almost surely}}.
}
The extremal problem on the right hand side is interesting in itself. 

We present a simple geometric observation that Lemma~\ref{SimpleMajorant} is based upon. 
Recall the definition~\eqref{SmallBellman} of the domains $\omega_\eps$.

\begin{Le}\label{SimpleGeometry}
Let the point~$(x,y,z)\in \Omega$ be split into the points~$(x_j,y_j,z_j)$ lying inside~$\Omega$ 
according to the rules~\eqref{SplittingRules}. Then\textup, the convex hull of the points~$(x_j,y_j)$ 
lies in the parabolic strip~$\omega_{\sqrt{1-z^2}}$.
\end{Le}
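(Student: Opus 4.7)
The plan is to verify the two defining inequalities of $\omega_{\sqrt{1-z^2}}$ separately for any convex combination of the points $(x_j,y_j)$. Let $(x',y') = \sum_j \beta_j(x_j,y_j)$ with $\beta_j \geq 0$ and $\sum_j \beta_j = 1$ be such a convex combination.

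First, I would dispose of the lower parabolic constraint $y' \geq x'^2$. Since each $(x_j,y_j,z_j)\in\Omega$, we have $y_j \geq x_j^2$. Because $t\mapsto t^2$ is convex, Jensen's inequality for the discrete measure $\{\beta_j\}$ gives $y' = \sum_j\beta_j y_j \geq \sum_j \beta_j x_j^2 \geq \bigl(\sum_j \beta_j x_j\bigr)^{\!2} = x'^2$, which is exactly the lower inequality defining $\omega_{\sqrt{1-z^2}}$.

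The heart of the lemma is converting the upper constraint $y_j \leq 1-z_j^2 + x_j^2$ into something linear in $(x_j,y_j)$. Using the splitting rule $z_j^2 = z^2 + (x_j-x)^2$, this inequality becomes
\begin{equation*}
y_j \leq x_j^2 - (x_j-x)^2 + 1 - z^2 = 2xx_j - x^2 + 1 - z^2,
\end{equation*}
which is affine in $(x_j,y_j)$ (with $x$ and $z$ treated as fixed parameters). This affine bound is preserved by convex combinations, so $y' \leq 2xx' - x^2 + 1 - z^2$. Finally, the elementary identity $2xx' - x^2 = x'^2 - (x-x')^2 \leq x'^2$ yields $y' \leq x'^2 + 1 - z^2$, which is the upper parabolic bound defining $\omega_{\sqrt{1-z^2}}$.

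The substance is entirely in the second paragraph: the relation $z_j^2 - z^2 = (x_j-x)^2$ is exactly what is needed to absorb the nonlinear dependence on $z_j$ into a linear bound in $(x_j,y_j)$; once this is done, both bounds are standard convexity facts and no further obstacle arises. Note that the splitting rule need not be used to enforce $\sum_j \alpha_j (x_j,y_j) = (x,y)$; the conclusion holds for arbitrary convex coefficients $\beta_j$, not only for the $\alpha_j$ appearing in \eqref{SplittingRules}.
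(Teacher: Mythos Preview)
Your proof is correct and follows essentially the same approach as the paper: the key step in both is that the splitting rule $z_j^2=z^2+(x_j-x)^2$ turns the upper constraint $y_j\leq 1-z_j^2+x_j^2$ into the affine bound $y_j\leq 2xx_j-x^2+1-z^2$, i.e., each $(x_j,y_j)$ lies below the tangent to the upper parabola at $(x,x^2+1-z^2)$. The only cosmetic differences are that the paper first applies a parabolic shift to reduce to $x=0$, and leaves the lower bound (your Jensen step) implicit.
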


\begin{proof}
We will prove that the points~$(x_j,y_j)$ lie below the tangent at~$(x,x^2 + 1-z^2)$ to the upper 
boundary of~$\omega_{\sqrt{1-z^2}}$. Note that the statement and the rules~\eqref{SplittingRules} 
are invariant with respect to the parabolic shift
\eq{\label{ParabolicShift}
(x_j,y_j,z_j)\mapsto (x_j - \tau, y_j +\tau^2 - 2\tau x_j,z_j),
} 
for any $\tau \in \mathbb{R}$. So, in what follows we may assume~$x=0$ (otherwise $x$ can be 
shifted to $0$ using the shift with $\tau = x$). For any~$j$,
\begin{equation*}
y_j \leq 1-z_j^2 + x_j^2,
\end{equation*}  
simply because~$(x_j,y_j,z_j)\in \Omega$. Therefore, by the last rule in~\eqref{SplittingRules} 
and the assumption~$x=0$,
\begin{equation*}
y_j \leq 1-z_j^2 + x_j^2 = 1-z^2,
\end{equation*}
which  means exactly that~$(x_j,y_j)$ lies below the tangent to 
the parabola~${\bf y} = {\bf x}^2 + 1-z^2$ at the point $(0,1-z^2)$. 
\end{proof}

\begin{proof}[Proof of Lemma~\textup{\ref{SimpleMajorant}}]
We have the following chain of inequalities:
\begin{equation}\label{ChainOfInequalities}
b_{\sqrt{1-z^2}}(x,y)\geq \sum\limits_{j=1}^n\alpha_jb_{\sqrt{1-z^2}}(x_{{j}},y_{{j}}) \geq 
\sum\limits_{j=1}^n\alpha_jb_{\sqrt{1-z_j^2}}(x_{{j}},y_{{j}}).
\end{equation}
The first inequality follows from the local concavity of~$b_{\sqrt{1-z^2}}$ and the fact that 
the convex hull of $(x_{{j}},y_{{j}})$ lies in $\omega_{\sqrt{1-z^2}}$ by Lemma~\ref{SimpleGeometry}. 
The second inequality is a consequence of the fact that~$b_{\eps}$ is an increasing function of~$\eps$ 
(we maximize over a larger set in~\eqref{SmallBellman} when we increase $\varepsilon$).

So, the function~$(x,y,z)\mapsto b_{\sqrt{1-z^2}}(x,y)$ satisfies the first three requirements of 
Lemma~\ref{PropertiesOfBellmanFunction}.  Thus, by the fourth statement in 
Lemma~\ref{PropertiesOfBellmanFunction} we have~$\Bell(x,y,z)\leq b_{\sqrt{1-z^2}}(x,y)$.
\end{proof}

\begin{proof}[Proof of Corollary~\textup{\ref{ExtremeSummabilityCor}}.]
By Theorem~$6.1.2$ in~\cite{ISVZ2018}, the function~$b_1$ (with~$h$ in the role of~$f$) is finite. 
Combining this information with Lemma~\ref{SimpleMajorant}, we obtain the finiteness of~$\Bell$, 
which, in the light of Remark~\ref{FinitenessRemark}, means exactly the assertion of the Corollary. 
\end{proof}

\begin{proof}[Proof of Theorem~\textup{\ref{Embedding}}]
Assume that~$\|S\varphi\|_{L^{\infty}}= 1$. Let us show that in this case the~$\mathbb{R}^2$-valued 
martingale $M_n = (\varphi_n, \E(\varphi^2\mid \F_n))$ is an~$\omega_1$-martingale. We verify three 
conditions in Definition~\ref{BasicMartingales}. The second condition is justified by the martingale 
convergence theorem since~$\varphi \in L_2$. To verify the third property, we consider 
an~$\mathbb{R}^3$-valued process
\eq{
\mu_n = (\varphi_n, \E(\varphi^2\mid \F_n), S\varphi_{n}),
} 
where~$S\varphi_n$ is defined in~\eqref{StoppedSquareFunction}.
Let~$a\in\mathcal{F}_n$ be an atom. Then, the points~$(x,y,z)=\mu_n(a)$ and~$(x_j,y_j,z_j)=\mu_{n+1}(a_j)$, 
where the~$a_j$ are all the children of~$a$, satisfy~\eqref{SplittingRules}. Thus, by 
Lemma~\ref{SimpleGeometry}, the convex hull of the points~$M_{n+1}(a_j)$ lies inside~$\omega_1$. 
Therefore,~$M$ is an~$\omega_1$-martingale.

Recall~$M_{\infty}^1$ is the first coordinate of~$M_{\infty}$. By Lemma~\ref{MartFunct},
$\|(M_{\infty}^1)^*\|_{\BMO([0,1])} \leq 1$ since~$M$ is an $\omega_1$-martingale. We notice 
that~$M_{\infty}^1$ coincides with~$\varphi_{\infty}$ and finally obtain the inequality
\begin{equation*}
\|\varphi_{\infty}^*\|_{\BMO([0,1])} \leq \|S\varphi\|_{L_{\infty}}.
\end{equation*}

The sharpness of this inequality is obtained by considering the martingale~$\varphi$ such that
$\varphi_0 = 0$ and~$\varphi_1$ is~$\pm1$ with equal probability.
\end{proof}

The lemma below suggests a simpler way to verify property~\textup{(iii)} of 
Lemma~\ref{PropertiesOfBellmanFunction}.

\begin{Le}\label{main_under_tangent_Le} 
Let~$G\colon \Omega \to \mathbb{R}$ be a function. Assume that for every point 
$(x,y,z)\in\Omega\setminus\{y=x^2\}$ there exist numbers~$\ell_1(x,y,z)$ and~$\ell_2(x,y,z)$ 
such that the estimate
\begin{equation}\label{under_tangent_relation_Le}
    G(\xx, \yy, \zz) \le G(x,y, z) + \ell_1(x,y,z)(\xx - x) + \ell_2(x,y,z) (\yy - y)
\end{equation}
holds true for every point~$(\xx, \yy, \zz) \in \Omega$ such that~$\zz^2 = z^2+ (\xx-x)^2$. 
Then\textup,~$G$ fulfills the main inequality
\begin{equation}
G(x,y,z) \geq \sum\limits_{j=1}^N \alpha_j G(x_j,y_j,z_j),
\end{equation}
where the parameters involved satisfy the splitting rules~\eqref{SplittingRules}.
\end{Le}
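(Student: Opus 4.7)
The plan is straightforward: the hypothesis (\ref{under_tangent_relation_Le}) is precisely an affine two-variable majorant for $G$ valid on the ``parabolic cone'' $\bar z^2 = z^2 + (\bar x - x)^2$, and the splitting rules (\ref{SplittingRules}) guarantee that each split point $(x_j, y_j, z_j)$ lies on this cone relative to $(x,y,z)$. So the proof reduces to applying the tangent inequality at every $j$ and averaging.

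First I would dispose of the degenerate case $y = x^2$ separately, since (\ref{under_tangent_relation_Le}) is only assumed off the lower boundary of $\Omega$. Here, using $y_j \geq x_j^2$ (as $(x_j, y_j, z_j) \in \Omega$) together with $\sum_j \alpha_j x_j = x$ and $\sum_j \alpha_j y_j = y = x^2$, I get
\begin{equation*}
0 = y - x^2 = \sum_j \alpha_j (y_j - x_j^2) + \sum_j \alpha_j (x_j - x)^2,
\end{equation*}
and both non-negative sums must vanish; hence $x_j = x$, $y_j = x_j^2 = y$, and $z_j = z$ for all $j$, so the main inequality is a trivial equality.

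From now on assume $y > x^2$, so that $\ell_1(x,y,z)$ and $\ell_2(x,y,z)$ are defined. For each $j$ I apply (\ref{under_tangent_relation_Le}) with base point $(x,y,z)$ and target $(\bar x, \bar y, \bar z) := (x_j, y_j, z_j)$; the compatibility condition $\bar z^2 = z^2 + (\bar x - x)^2$ required by the hypothesis is exactly the third line of the splitting rules (\ref{SplittingRules}). This yields
\begin{equation*}
G(x_j, y_j, z_j) \leq G(x,y,z) + \ell_1(x,y,z)(x_j - x) + \ell_2(x,y,z)(y_j - y).
\end{equation*}

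Finally I multiply by $\alpha_j$ and sum over $j$. The mass and first-moment identities $\sum_j \alpha_j = 1$, $\sum_j \alpha_j x_j = x$, $\sum_j \alpha_j y_j = y$ annihilate both affine remainders, leaving $\sum_j \alpha_j G(x_j, y_j, z_j) \leq G(x,y,z)$, which is the main inequality. There is no genuine obstacle here: the content of the lemma is a recasting of the multi-point combinatorial condition (iii) of Lemma~\ref{PropertiesOfBellmanFunction} into a pointwise tangent condition that will be far easier to verify analytically when constructing explicit candidate supersolutions $G$ in the later sections.
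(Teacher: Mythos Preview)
Your proof is correct and follows essentially the same route as the paper: dispose of the boundary case $y=x^2$ (where the splitting rules force $x_j=x$, $y_j=y$, $z_j=z$ for all $j$ with $\alpha_j>0$), then apply~\eqref{under_tangent_relation_Le} with $(\bar x,\bar y,\bar z)=(x_j,y_j,z_j)$ and average against the weights $\alpha_j$. The only difference is that you spell out the variance identity in the degenerate case explicitly, whereas the paper simply declares it trivial.
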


\begin{Rem}
If~$G$ is differentiable at~$(x,y,z)$\textup, then the natural choice for~$\ell_1(x,y,z)$ and 
$\ell_2(x,y,z)$ would be the pair of partial derivatives~$\frac{\partial }{\partial x}G(x,y,z)$ 
and~$\frac{\partial }{\partial y}G(x,y,z)$. In fact\textup, one may show a reverse statement. 
If~$G$ satisfies the main inequality as above and is~$C^1$-smooth on~$\Omega,$ 
then~\eqref{under_tangent_relation_Le} is true with~$\ell_1$ and~$\ell_2$ being the corresponding 
partial derivatives of~$G$ at~$(x,y,z)$.
\end{Rem}

\begin{proof}[Proof of Lemma~\textup{\ref{main_under_tangent_Le}}.]
Pick some collection of parameters that satisfy the splitting rules~\eqref{SplittingRules}. 
Without loss of generality, we may assume~$y > x^2$ (in this case the main inequality is 
trivial since in this case $y_j=y$, $x_j=x$). Setting $(\xx,\yy) = (x_j,y_j)$, we obtain
\begin{equation}\label{B_under_tangent_j}
G(x_j, y_j,z_j) \le G(x,y,z) +  \ell_1(x,y,z)(x_j - x)+ \ell_2(x,y,z) (y_j - y),
\end{equation} 
for every $j\in \{1,\ldots,N\}$.
Multiplying~\eqref{B_under_tangent_j} by $\alpha_j$ and summing these products, we obtain 
the desired inequality
\begin{multline}
\sum\limits_{j=1}^N\alpha_jG(x_j,y_j,z_j) \le
\\
G(x,y,z) + \left( \sum\limits_{j=1}^N \alpha_j x_j - x \right)  \ell_1(x,y,z)  + 
\left( \sum\limits_{j=1}^N \alpha_j y_j - y \right)\ell_2(x,y,z) = G(x,y,z).
\end{multline}
\end{proof}

\section{Simple cases}\label{S3}
\subsection{Foliations for Bellman functions}\label{s31}
We describe the function~$b_\eps$ defined in~\eqref{SmallBellman} in the cases needed for the proof 
of Theorem~\ref{CoincidenceTheorem}. We refer the reader to~\cite{ISVZ2018} for details; as it has 
been said, some of these results were obtained in earlier papers. 

Consider the case~$f''$ is non-increasing on the entire line (recall $f$ is twice differentiable). 
For any $u \in \R$, we draw the segment
\eq{
\Big[(u,u^2), \big((u-\eps),(u-\eps)^2 + \eps^2\big)\Big]
}
that touches the upper boundary of~$\omega_\eps$. Note that when~$u$ runs through~$\R$ these 
segments foliate the entire domain~$\omega_\eps$. We call such segments right tangents (since 
they lie on the right of the tangency point). For any~$(x,y)\in \omega_\eps$ there is a unique right 
tangent that passes through it. We denote the corresponding point~$u$ by~$\uR(x,y)$. In other words,
\eq{
(x,y) \in \Big[(\uR,\uR^2), \big((\uR-\eps),(\uR-\eps)^2 + \eps^2\big)\Big].
}
\begin{Th}\label{ThRTang}
Let~$f$ satisfy the standard requirements \textup(Definition~\ref{DefStReq}\textup) and let~$f''$ 
be non-increasing. The function~$b_{\eps}$ is linear along right tangents in the sense that there 
exists a function~$\mr\colon \R \to \R$ such that
\eq{\label{RightLinearity}
b_{\eps}(x,y) = f(\uR) + \mr(\uR)(x-\uR),\quad \uR = \uR(x,y),\quad (x,y)\in \omega_\eps.
}
The value of~$\mr$ may be computed by the formula
\eq{
\label{eq270102}
\mr(u) = \eps^{-1}f(u) - \eps^{-2}\int\limits_{-\infty}^0 e^{t/\eps} f(u+t)\,dt.
}
\end{Th}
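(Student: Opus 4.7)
The plan is to construct a candidate function
\begin{equation*}
G(x,y) \df f(\uR(x,y)) + \mr(\uR(x,y))\,(x-\uR(x,y))
\end{equation*}
that is linear by construction along every right tangent, and then identify it with $b_\eps$ using Theorem~\ref{Duality}. The boundary condition $G(u,u^2) = f(u)$ is immediate since $\uR(u,u^2) = u$. To determine the slope function $\mr$, I would impose $C^1$-regularity of $G$ across the foliation. Since two adjacent right tangents share direction at their common touching point on the upper boundary of $\omega_\eps$, consistency of $G$ as one moves along the upper parabola $v\mapsto (v, v^2+\eps^2)$, with $v = u - \eps$, yields the first-order linear ODE
\begin{equation*}
\eps\,\mr'(u) + \mr(u) = f'(u).
\end{equation*}
Solving with the integrating factor $e^{u/\eps}$, demanding vanishing at $-\infty$ (justified by the integrability in Definition~\ref{DefStReq}), and integrating by parts produces exactly the expression~\eqref{eq270102}.

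Next I would verify that $G$ is locally concave on $\omega_\eps$. Since $G$ is linear along the right-tangent foliation, the Hessian has a trivial kernel in the tangent direction, so concavity reduces to checking the sign of one transverse second directional derivative. A direct computation in foliation-adapted coordinates, combined with the ODE for $\mr$ and the hypothesis that $f''$ is non-increasing, gives the correct (non-positive) sign. Then, since $G$ is locally concave on $\omega_\eps$ and satisfies $G(u,u^2) = f(u)$, Theorem~\ref{Duality} yields $G \geq b_\eps$ on $\omega_\eps$.

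For the reverse inequality, I would construct, for each $(x,y) \in \omega_\eps$, an admissible test function $\psi$ with $\int_0^1\psi = x$, $\int_0^1\psi^2 = y$, $\|\psi\|_{\BMO([0,1])} \leq \eps$, and $\int_0^1 f(\psi) = G(x,y)$. The key building block is the logarithmic test $\psi_u(t) = u + \eps \log t$, which has mean $u - \eps$, second moment $(u-\eps)^2 + \eps^2$, and $\BMO$-seminorm $\eps$; the substitution $\tau = \eps\log t$ gives $\int_0^1 f(\psi_u) = f(u) - \eps\,\mr(u)$, matching $G$ at the upper endpoint of the right tangent issued from $(u,u^2)$. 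For an interior point of a right tangent, $\psi$ is obtained by using the constant $u$ on one subinterval and the logarithmic profile on the complement, with the subinterval length chosen so that the mean hits $x$; the resulting $\int f(\psi)$ equals $G(x,y)$ because $G$ is affine along the tangent.

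The main obstacle is the local concavity verification: one must carry out the transverse second-derivative computation carefully and use the monotonicity of $f''$ at precisely the right point in the calculation. A related subtlety is justifying the decay of $e^{u/\eps}\mr(u)$ as $u\to -\infty$ that eliminates the constant of integration in the ODE; this is exactly where the integrability condition in Definition~\ref{DefStReq} is used, and where the non-negativity of $f$ combined with the growth restriction on $f''$ makes the limit well-defined.
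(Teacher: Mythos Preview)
The paper does not prove Theorem~\ref{ThRTang} at all; it is stated without proof in Subsection~\ref{s31}, with the explicit disclaimer ``We refer the reader to~\cite{ISVZ2018} for details; as it has been said, some of these results were obtained in earlier papers.'' So there is no in-paper proof to compare against.

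That said, your proposal is correct and is essentially the standard argument from the cited literature (\cite{ISVZ2018}, \cite{SlavinVasyunin2011}, \cite{IOSVZ2016}). The derivation of the ODE $\eps\,\mr'(u)+\mr(u)=f'(u)$ from $C^1$-gluing along the upper parabola, the integration with the decay condition supplied by Definition~\ref{DefStReq}, the use of Theorem~\ref{Duality} for the upper bound, and the logarithmic optimizer $\psi_u(t)=u+\eps\log t$ for the lower bound are exactly how those papers proceed. Your identification of the local concavity check as the main technical step is also accurate: in the references this is handled via the ``torsion'' or ``differential'' of the family of extremal segments, and the sign condition reduces precisely to $f'''\le 0$ (equivalently, $f''$ non-increasing). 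One small comment: for the interior points of a right tangent, the test function you describe (constant $u$ on a subinterval, logarithmic profile on the complement) needs its $\BMO$ norm checked, not just its averages; this is routine but should be mentioned, and is where the specific geometry of \emph{right} tangents (as opposed to arbitrary chords) matters.
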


The case when~$f''$ is non-decreasing is completely similar. In this case, we consider left tangents
\eq{
\Big[(u,u^2), \big((u+\eps),(u+\eps)^2 + \eps^2\big)\Big]
}
and the corresponding function~$\uL\colon \omega_\eps \to \R$ such that
\eq{\label{DefUl}
(x,y) \in \Big[(\uL,\uL^2), \big((\uL+\eps),(\uL+\eps)^2 + \eps^2\big)\Big].
}
\begin{Th}\label{TheoremLeftTangents}
Let~$f$ satisfy the standard requirements \textup(Definition~\ref{DefStReq}\textup) and 
let~$f''$ be non-decreasing. The function~$b_{\eps}$ is linear along left tangents in the sense 
that there exists a function~$\ml\colon \R \to \R$ 
such that
\eq{\label{LeftLinearity}
b_{\eps}(x,y) = f(\uL) + \ml(\uL)(x-\uL),\quad \uL = \uL(x,y),\quad (x,y)\in \omega_\eps.
}
The value of~$\ml$ may be computed by the formula
\eq{\label{eq270104}
\ml(u) = -\eps^{-1}f(u) + \eps^{-2}\int\limits_0^{\infty} e^{-t/\eps} f(u+t)\,dt.
}
\end{Th}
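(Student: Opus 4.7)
The plan is to obtain Theorem~\ref{TheoremLeftTangents} from Theorem~\ref{ThRTang} via the reflection symmetry $x \mapsto -x$, so that no direct computation in the new case is needed. Put $\tilde{f}(x) = f(-x)$. Then $\tilde{f}''(x) = f''(-x)$ is non-increasing, and $\tilde{f}$ still satisfies the standard requirements: the sign changes of $\tilde{f}'''$ are in bijection with those of $f'''$, and the integrability assumption $\int_{\R} e^{-|t|/\eps}\,|df''(t)| < \infty$ is invariant under $t \mapsto -t$. Thus Theorem~\ref{ThRTang} applies to $\tilde{f}$, yielding the right-tangent linearity of $b_\eps^{\tilde{f}}$ with slope $\mr^{\tilde{f}}$ given by~\eqref{eq270102}.

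Next I would relate $b_\eps^{\tilde{f}}$ to $b_\eps^{f}$ and match the foliations. The substitution $\psi \mapsto -\psi$ in~\eqref{SmallBellman} preserves both $\int_0^1 \psi^2$ and $\|\psi\|_{\BMO([0,1])}$ while negating $\int_0^1 \psi$, so $b_\eps^{\tilde{f}}(x,y) = b_\eps^f(-x,y)$. Geometrically, the reflection $(x,y) \mapsto (-x,y)$ preserves $\omega_\eps$ and carries the right tangent through $(u,u^2)$ to the left tangent through $(-u,u^2)$; comparing the definition of $\uR$ preceding Theorem~\ref{ThRTang} with~\eqref{DefUl} gives $\uR^{\tilde{f}}(-x,y) = -\uL(x,y)$, where $\uL$ refers to $f$.

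Plugging the point $(-x,y)$ into~\eqref{RightLinearity} for $\tilde{f}$ and combining with the two identities above yields
\begin{equation*}
b_\eps^f(x,y) = f\bigl(\uL(x,y)\bigr) - \mr^{\tilde{f}}\!\bigl(-\uL(x,y)\bigr)\bigl(x - \uL(x,y)\bigr),
\end{equation*}
which is exactly~\eqref{LeftLinearity} with $\ml(u) = -\mr^{\tilde{f}}(-u)$. To finish, I would substitute $\tilde{f}(-u+t) = f(u-t)$ into~\eqref{eq270102} and change variables $s = -t$ in the integral; this converts $-\mr^{\tilde{f}}(-u)$ into the right-hand side of~\eqref{eq270104}. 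No real obstacle arises in this plan; the only point requiring attention is to keep track of the orientation of the tangent segments under reflection, which is what produces the sign flip passing from $\mr$ to $\ml$.
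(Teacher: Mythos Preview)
Your symmetry argument is correct: the substitution~$\psi\mapsto-\psi$ gives~$b_\eps^{\tilde f}(x,y)=b_\eps^f(-x,y)$, the reflection~$(x,y)\mapsto(-x,y)$ carries right tangents to left tangents with~$\uR(-x,y)=-\uL(x,y)$, and the change of variable~$s=-t$ converts~\eqref{eq270102} for~$\tilde f$ into~\eqref{eq270104}. One cosmetic point: your notation~$\uR^{\tilde f}$ is slightly misleading, since~$\uR$ and~$\uL$ are purely geometric and do not depend on the boundary datum~$f$; it would be cleaner to drop the superscript.

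As for comparison with the paper: the paper does not actually prove Theorem~\ref{TheoremLeftTangents} (or Theorem~\ref{ThRTang}) in the text. Both are stated in Subsection~\ref{s31} as known results imported from~\cite{ISVZ2018} and earlier work, with the sentence ``We refer the reader to~\cite{ISVZ2018} for details.'' So there is no in-paper proof to compare against. Your reduction of the left-tangent case to the right-tangent case via reflection is the standard and expected argument, and it is exactly how one would justify stating both theorems after proving only one.
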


Now consider the case where there exists a point~$c\in \R$ such that~$f''$ is non-decreasing 
on the left of~$c$ and is non-increasing on the right. In this case, there exist unique continuous 
functions~$a,b \colon [0,2\eps] \to \mathbb{R}$ such that~$a$ is decreasing,~$b$ is increasing, and
\begin{gather}
a(0) = b(0) = c;
\\
b(l) - a(l) = l,\qquad l \in [0,2\eps];\rule{0pt}{12pt}
\\
\label{UrLun}
\frac{f'(b)+f'(a)}2=\frac{f(b)-f(a)}{b-a},\qquad a=a(l),\; b= b(l),\quad l\in (0,2\eps].
\end{gather}
We split~$\omega_\eps$ into three domains
\alg{
\label{om1}\om_1(\eps) &= \Set{(x,y)\in \omega_\eps}{\uL(x,y) \leq a(2\eps)};
\\
\label{om2}\om_2(\eps) &= \Set{(x,y)\in\omega_\eps}{\uL(x,y) \geq a(2\eps), \ \uR(x,y) \leq b(2\eps)};
\\
\label{om3}\om_3(\eps) &= \Set{(x,y)\in\omega_\eps}{\uR(x,y) \geq b(2\eps)},
}
the first and third of them called tangent domains, the second called a cup. 
The identity~\eqref{UrLun} is called the cup equation.
\begin{Th}\label{LeftToRight}
Let~$f$ satisfy the standard requirements (Definition~\textup{\ref{DefStReq})}. 
Assume~$f''$ be non-decreasing on the left of~$c$ and non-increasing on the right. 
The function~$b_\eps$ is linear along the chords
\eq{\label{Chord}
\Big[\big(a(l),a^2(l)\big), \big(b(l),b^2(l)\big)\Big],\quad l\in (0,2\eps],
}
in the sense that
\eq{\label{InCup}
\begin{gathered}
b_\eps(x,y) = \alpha f(a(l)) + \beta f(b(l)), \qquad \text{whenever} 
\\ 
x=\alpha a(l) + \beta b(l),\quad y = \rule{0pt}{15pt}
\alpha a^2(l)+\beta b^2(l),\qquad \alpha + \beta = 1,\quad \alpha,\beta \geq 0.
\end{gathered}
}
This defines the function~$b_\eps$ in the cup~\eqref{om2} foliated by the chords. 
On the tangent domains~\eqref{om1} and~\eqref{om3}\textup, the function~$b_\eps$ is defined 
by formulas~\eqref{LeftLinearity} and~\eqref{RightLinearity} respectively. 
The corresponding functions~$\ml$ and~$\mr$ are given by the formulas
\alg{\label{mlcup}
\ml(u) &=\frac{f(b(2\eps))\!+\!f(a(2\eps))}{2\eps}\exp\Big(\frac{u\!-\!a(2\eps)}{\eps}\Big)-\frac{f(u)}{\eps}+  
\frac{1}{\eps^{2}}\!\!\!\!\!\int\limits_{0}^{a(2\eps)-u}\!\!\!\!\! e^{\!-t/\eps}f(t+u)\,dt,\!\!
\quad u \in (\!-\infty, a(2\eps));
\\
\label{mrcup}
\mr(u) &=\frac{f(b(2\eps))\!+\!f(a(2\eps))}{2\eps}\exp\Big(\frac{b(2\eps)\!-\!u}{\eps}\Big)+\frac{f(u)}{\eps}-  
\frac{1}{\eps^{2}}\!\!\!\!\int\limits_{b(2\eps)-u}^{0}\!\!\!\! e^{t/\eps}f(t+u)\,dt,
\quad u \in (b(2\eps),+\infty).
}
\end{Th}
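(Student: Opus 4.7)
The plan is to invoke the duality characterization of Theorem~\ref{Duality}: it suffices to write down an explicit function $G\colon \omega_\eps \to \R$ satisfying the stated formulas, verify that $G$ is locally concave with boundary value $G(x, x^2) = f(x)$, and exhibit extremal functions $\psi$ realizing the matching lower bound $b_\eps \geq G$. I would define $G$ piecewise by~\eqref{LeftLinearity} on $\om_1(\eps)$, by~\eqref{InCup} on $\om_2(\eps)$, and by~\eqref{RightLinearity} on $\om_3(\eps)$, with $\ml$ and $\mr$ given by~\eqref{mlcup} and~\eqref{mrcup}.

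First I would derive the formulas for $\ml$ and $\mr$ by a $C^1$-matching argument. Demanding $C^1$-smoothness of $G$ across the upper free boundary $y = x^2 + \eps^2$ in each tangent domain, while imposing the lower-parabola condition $G(x, x^2) = f(x)$ and linearity along the tangents, forces a first-order linear ODE on $\ml$ (for the purely left-tangent configuration of Theorem~\ref{TheoremLeftTangents} this is $\eps \ml'(u) - \ml(u) = -f'(u)$). The present theorem amounts to the same ODE, but with the boundary condition imposed at $u = a(2\eps)$ via $C^1$-matching with the cup: a direct evaluation of~\eqref{mlcup} at $u = a(2\eps)$ yields $\ml(a(2\eps)) = (f(b(2\eps)) - f(a(2\eps)))/(2\eps)$, which, together with $b(2\eps) - a(2\eps) = 2\eps$, is exactly the slope of the chord closing the cup, so the match is consistent; a symmetric computation handles $\mr$ at $u = b(2\eps)$. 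Inside the cup, the linear form~\eqref{InCup} is forced by the lower-boundary conditions $G(a(l), a^2(l)) = f(a(l))$ and $G(b(l), b^2(l)) = f(b(l))$, and the cup equation~\eqref{UrLun} is precisely the condition that adjacent chords share a common tangent of $f$ at their endpoints, ensuring a $C^1$-foliation throughout $\om_2(\eps)$.

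The central step is verifying local concavity of the resulting $G$. Since $G$ is affine along every leaf of the foliation, it suffices to check concavity in the direction transversal to the foliation, together with $C^1$-matching at the two interface chords $l = 2\eps$. In the cup this reduces, via the cup equation, to the monotonicity of $a(l)$ and $b(l)$, which follows from the sign change of $f''$ at $c$. In the tangent domains, differentiating~\eqref{mlcup} and~\eqref{mrcup} and invoking monotonicity of $f''$ on each side of $c$ yields the required one-sided concavity estimates, while convergence of the integrals is ensured by the exponential-summability hypothesis of Definition~\ref{DefStReq}. The main obstacle I anticipate is the simultaneous matching at the two cup-chord interfaces: one must check not only the value matching above, but also that the transversal first derivatives of $G$ agree across each interface and that the transversal second derivative does not change sign there, so that concavity is preserved globally and not just in the interior of each subdomain. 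Once $G$ is shown to be a locally concave majorant extending $f$ on the lower parabola, Theorem~\ref{Duality} gives $b_\eps \leq G$. For the reverse inequality I would construct explicit extremizers: in the cup, the two-valued step function supported on $\{a(l), b(l)\}$ with the weights prescribed by~\eqref{InCup}; in each tangent domain, an iterative geometric cascade whose averages trace the corresponding tangent through $(x, y)$, with its $\BMO([0,1])$-norm bounded by $\eps$ via the recursive construction of~\cite{ISVZ2018}.
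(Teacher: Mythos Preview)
The paper does not prove Theorem~\ref{LeftToRight}; it is quoted from~\cite{ISVZ2018} (see the opening sentence of Section~\ref{s31}: ``We refer the reader to~\cite{ISVZ2018} for details''). Your outline is essentially the method used in that reference and its predecessors~\cite{IOSVZ2016}, \cite{SlavinVasyunin2012}: build the candidate $G$ from the foliation, use $C^1$-matching along the upper parabola to derive the ODE for $\ml,\mr$ and the boundary values at $a(2\eps),b(2\eps)$, check local concavity transversally to the leaves, invoke Theorem~\ref{Duality} for $b_\eps\le G$, and produce explicit optimizers for the reverse inequality. So your approach matches what the cited literature does, and the sketch is correct at the level of a plan.
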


\subsection{Useful lemmas}\label{s32}

\begin{Le}\label{AlongParabola}
Let~$(x_0,y_0,z_0)\in\Omega,$ $y-x^2 = y_0-x_0^2>0,$ and~$x > x_0$. Let~$G\colon\Omega\to\R$ be a function that 
satisfies the first three properties in Lemma~\textup{\ref{PropertiesOfBellmanFunction}} with a function $f$ 
continuous on $[x_0-\eps,x-\eps],$ where $\eps = \sqrt{y_0 - x_0^2}$. Then\textup, we have
\eq{\label{IntegralEstimate}
G(x_0,y_0,z_0) \geq \eps^{-1}\int\limits_{x_0}^x e^{-\frac{\tau-x_0}{\eps}} f(\tau-\eps)\,d\tau + 
e^{-\frac{x-x_0}{\eps}}\liminf_{\delta\to 0+} G\Big(x,y-\delta,\sqrt{z_0^2 + \delta}\Big).
}
\end{Le}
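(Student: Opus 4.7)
The plan is to iterate the main inequality (property~(iii) of Lemma~\ref{PropertiesOfBellmanFunction}) along a chain of two-point splits that marches the $x$-coordinate from $x_0$ to $x$ while staying near the parabolic shift $y - x^2 = \eps^2$. Fix a large integer $n$, set $h = (x - x_0)/n$, and define inductively a sequence $(x^{(k)}, y^{(k)}, z^{(k)})$ starting at $k = 0$ with $(x_0, y_0, z_0)$. Writing $\tilde\eps_k = \sqrt{y^{(k)} - (x^{(k)})^2}$, I split this point into a \emph{boundary child}
\[
A_k = \bigl(x^{(k)} - \tilde\eps_k,\; (x^{(k)} - \tilde\eps_k)^2,\; \sqrt{(z^{(k)})^2 + \tilde\eps_k^2}\bigr)
\]
of weight $\alpha_k = h/(h + \tilde\eps_k)$ and a \emph{continuation child} $(x^{(k+1)}, y^{(k+1)}, z^{(k+1)})$ with $x^{(k+1)} = x^{(k)} + h$ of weight $1 - \alpha_k$; the splitting rules \eqref{SplittingRules} then force $(z^{(k+1)})^2 = (z^{(k)})^2 + h^2$ and a short calculation gives $y^{(k+1)} = (x^{(k+1)})^2 + \tilde\eps_k^2 - h^2$. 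Consequently $\tilde\eps_k^2 = \eps^2 - kh^2$ and $(z^{(k)})^2 = z_0^2 + kh^2$, so $(z^{(k)})^2 + \tilde\eps_k^2 \equiv z_0^2 + \eps^2 \le 1$ thanks to the upper-parabola condition in the definition of $\Omega$; every child lies in $\Omega$ once $n$ is large enough that $z_0^2 + (x-x_0)^2/n \le 1$.

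Iterating the main inequality backwards and applying the boundary condition $G(s, s^2, \cdot) = f(s)$ at each $A_k$ yields
\[
G(x_0, y_0, z_0) \;\ge\; \sum_{k=0}^{n-1} w_k\, f\bigl(x^{(k)} - \tilde\eps_k\bigr) \;+\; W_n\, G\bigl(x,\, y - \delta_n,\, \sqrt{z_0^2 + \delta_n}\bigr),
\]
where $w_k = \alpha_k \prod_{j<k}(1 - \alpha_j)$, $W_n = \prod_{j<n}(1 - \alpha_j)$, and $\delta_n = (x - x_0)^2/n$; the recursions identify the terminal point $(x^{(n)}, y^{(n)}, z^{(n)})$ exactly with $(x, y - \delta_n, \sqrt{z_0^2 + \delta_n})$. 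As $n \to \infty$, $\tilde\eps_k \to \eps$ uniformly in $k \le n$, so $\alpha_k = h/\eps + O(h^2)$, and the standard estimate $\log(1 - t) = -t + O(t^2)$ gives $W_n \to e^{-(x-x_0)/\eps}$ and $w_k \sim (h/\eps)\, e^{-kh/\eps}$. Since $f$ is continuous on $[x_0 - \eps, x - \eps]$, the weighted sum is a Riemann sum converging to $\int_0^{x - x_0} \eps^{-1} e^{-s/\eps} f(x_0 + s - \eps)\,ds$, which becomes $\eps^{-1}\int_{x_0}^x e^{-(\tau - x_0)/\eps} f(\tau - \eps)\,d\tau$ after the substitution $\tau = x_0 + s$.

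It remains to pass the terminal contribution through the limit. Since $\delta_n \to 0^+$ and the inequality holds for every $n$, taking $\liminf_n$ and using $\liminf_n G(x, y - \delta_n, \sqrt{z_0^2 + \delta_n}) \ge \liminf_{\delta \to 0^+} G(x, y - \delta, \sqrt{z_0^2 + \delta})$ gives exactly \eqref{IntegralEstimate}. The main obstacle I anticipate is the uniform control $\tilde\eps_k^2 \in [\eps^2 - (x-x_0)^2/n,\; \eps^2]$ needed to ensure the rate $\alpha_k = h/\eps + O(h^2)$ uniformly in $k$; this, however, propagates cleanly through the finite products $W_n$ and through the Riemann sum with total error $O(1/n)$, after which continuity of $f$ delivers convergence.
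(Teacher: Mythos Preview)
Your proof is correct and follows essentially the same approach as the paper: the paper likewise partitions $[x_0,x]$ into $N$ equal steps of size $t=(x-x_0)/N$, splits each point $(x_n,y_n,z_n)$ into the boundary point $(x_n-\eps_n,(x_n-\eps_n)^2,\sqrt{z_n^2+\eps_n^2})$ and the continuation point, derives the same recursion with $\eps_n^2=\eps^2-nt^2$, and passes to the limit via the same Riemann-sum and product estimates. The only cosmetic slip is that the binding largeness condition on $n$ is $\eps^2\ge (x-x_0)^2/n$ (ensuring $\tilde\eps_k^2\ge 0$), not the one you wrote, but you correctly identify this control in your final paragraph.
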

\begin{proof}
Let~$N$ be a large number, let~$t = \frac{x-x_0}{N}$. We construct the points~$(x_n,y_n,z_n)$,
$n\in\{1,\ldots,N\}$, consecutively, starting from~$(x_0,y_0,z_0)$:
\alg{
x_{n+1} &= x_n +t;
\\
y_{n+1} &= x_{n+1}^2 + y_n - x_n^2 - t^2;
\\
z_{n+1}^2 &= z_n^2 + t^2.
}
We note that 
$$y_{n+1}-x_{n+1}^2 = y_n-x_n^2 -t^2\leq 1-z_n^2 - t^2 = 1-z_{n+1}^2$$ and 
$$y_n-x_n^2 \geq y_N-x_N^2 = y_0-x_0^2 - Nt^2 = y_0 - x_0^2 - \frac{(x-x_0)^2}{N}>0$$
for $N$ large enough. Thus, all the points $(x_n,y_n,z_n)$ belong to $\Omega$. It is also convenient 
to introduce a sequence of parameters~$\eps_n$, where~$\eps_n^2 = y_n - x_n^2$. Then,
\eq{
\eps_{n}^2 = \eps^2 - nt^2.
}
The point~$(x_n,y_n,z_n)$ splits into~$(x_{n+1},y_{n+1},z_{n+1})$ and 
$(x_n-\eps_n,(x_n-\eps_n)^2,\sqrt{z_n^2+\eps_n^2})$ 
according to the rules~\eqref{SplittingRules}, which allows to write
\eq{
G(x_n,y_n,z_n) \geq \frac{\eps_n}{t+ \eps_n} G(x_{n+1},y_{n+1},z_{n+1}) + \frac{t}{t+\eps_n} f(x_n -\eps_n).
}

If we combine these inequalities, we arrive at
\eq{\label{PreIntegral}
\begin{aligned}
G(x_0,y_0,z_0)&\geq\Big(\prod_{n=0}^{N-1}\frac{\eps_n}{\eps_n+t}\Big)G\Big(x,y-Nt^2,\sqrt{z_0^2+Nt^2}\Big) 
\\
&\qquad\qquad+\sum\limits_{n=0}^{N-1}\frac{t}{t+\eps_n}\Big(\prod_{j=0}^{n-1}\frac{\eps_j}{\eps_j + t}\Big) 
f(x_0+tn-\eps_n).
\end{aligned}
}
It remains to prove that the sum on the right hand side converges as~$N\to \infty$ to the right hand side 
of~\eqref{IntegralEstimate}. This is, in fact, a fairly lengthy calculus exercise. We comment on its proof 
without going deeply into details. The main ``engine'' of this effect is that  we have 
$\eps_j = \eps + O(t)$,~$z_j = z_0+ O(t)$ uniformly in~$j\in \{0,\ldots,N\}$ when~$N$ is large. 
This allows to write
\eq{
\prod_{j=0}^{n-1}\frac{\eps_j}{\eps_j + t} = e^{-\frac{nt}{\eps}} + O(t)
}
uniformly in~$n\in \{1,\ldots,N\}$. Recalling~$Nt = x-x_0$, we get 
\eq{\label{eq270101}
\liminf_{N\to\infty}\Big(\prod_{n=0}^{N-1}\frac{\eps_n}{\eps_n+t}\Big)G\Big(x,y-Nt^2,\sqrt{z_0^2+Nt^2}\Big) 
\geq e^{-\frac{x-x_0}{\eps}}\liminf_{\delta\to 0+} G\Big(x,y-\delta,\sqrt{z_0^2 + \delta}\Big).
}
The second term in~\eqref{PreIntegral} equals
\eq{
\begin{aligned}
\sum\limits_{n=0}^{N-1}\frac{t}{t+\eps_n}\Big(\prod_{j=0}^{n-1}\frac{\eps_j}{\eps_j + t}\Big) f(x_0+tn-\eps_n) 
&= \frac{t}{\eps} \sum\limits_{n=0}^{N-1} e^{-\frac{nt}{\eps}}f(x_0+tn-\eps_n) +  O(t) 
\\
&=\eps^{-1}\int\limits_{x_0}^x e^{-\frac{\tau-x_0}{\eps}} f(\tau-\eps)\,d\tau + o(1).
\end{aligned}
}
\end{proof}

\begin{Rem}
In the case~$x_0 > x,$ the estimate~\eqref{IntegralEstimate} should be replaced with
\eq{
G(x_0,y_0,z_0) \geq \eps^{-1}\int\limits_{x}^{x_0} e^{\frac{\tau-x_0}{\eps}} f(\tau+\eps)\,d\tau + 
e^{\frac{x-x_0}{\eps}}\liminf_{\delta\to 0+} G\big(x,y-\delta,\sqrt{z_0^2 + \delta}\big).
}
\end{Rem}

\begin{Rem}\label{Rem270101}
Inequalities~\eqref{PreIntegral} and~\eqref{eq270101} lead to the following assertion. 
Let~$(x_0,y_0,z_0)\in\Omega,$ $y-x^2 = y_0-x_0^2>0,$ and~$x > x_0$. Let~$G\colon\Omega\to\R$ be 
a function that satisfies the first three properties in Lemma~\textup{\ref{PropertiesOfBellmanFunction}} 
with a function $f$ non-negative on $[x_0-\eps,x-\eps],$ where $\eps=\sqrt{y_0 - x_0^2}$. Then we have
\eq{
G(x_0,y_0,z_0) \geq e^{-\frac{x-x_0}{\eps}}\liminf_{\delta\to 0+} G\Big(x,y-\delta,\sqrt{z_0^2 + \delta}\Big).
}
Here we require no continuity assumption on~$f$.
\end{Rem}

\begin{Le}
Let~$G\colon \Omega \to \R$ be a function that satisfies the first three properties in 
Lemma~\textup{\ref{PropertiesOfBellmanFunction}}. Fix some~$z_0\in(0,1)$. Let~$(x,y)\in\R^2$ be a point such that
\alg{
\label{InDomain}0\leq y-x^2 &< 1-z_0^2;
\\
\label{CurvedParabola}y \geq &\; 2x^2.
}
Let also~$(x_0,y_0) = \alpha (x,y),$ where~$\alpha \in (0,1)$. Then\textup, 
\eq{\label{DesiredConvexity}
G(x_0,y_0,z_0) \geq \alpha \liminf_{z\to z_0+} G(x,y,z) + (1-\alpha) f(0).
}
\end{Le}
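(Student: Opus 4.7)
The plan is to construct an iterated splitting of $(x_0,y_0,z_0) = (\alpha x, \alpha y, z_0)$ that ``walks'' along the line segment toward $(x,y)$ while peeling off mass at the origin at each step. Partition $[\alpha,1]$ into $N$ equal parts by $\beta_k = \alpha + k(1-\alpha)/N$, set $z_0$ as given and define recursively
\begin{equation*}
z_k^2 = z_{k-1}^2 + \big((\beta_k-\beta_{k-1})x\big)^2, \qquad \tilde z_k^2 = z_{k-1}^2 + (\beta_{k-1}x)^2.
\end{equation*}
At the $k$-th stage, split the ``main'' point $(\beta_{k-1}x,\beta_{k-1}y,z_{k-1})$ into $(\beta_k x,\beta_k y, z_k)$ with weight $\beta_{k-1}/\beta_k$ and $(0,0,\tilde z_k)$ with weight $(\beta_k-\beta_{k-1})/\beta_k$. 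A direct check confirms that the splitting rules \eqref{SplittingRules} are satisfied.

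Next I must verify that every intermediate point belongs to $\Omega$ once $N$ is large. The key observation is that the condition $y \geq 2x^2$ makes the quadratic $\beta \mapsto \beta y - \beta^2 x^2$ increasing on $[\alpha,1]$, so its maximum on this interval equals $y-x^2$, which is strictly less than $1-z_0^2$ by \eqref{InDomain}. Since $z_k^2 \leq z_0^2 + (1-\alpha)^2 x^2/N \to z_0^2$, the constraint $\beta_k y - \beta_k^2 x^2 \leq 1 - z_k^2$ holds for all $k$ once $N$ is large enough. Admissibility of $(0,0,\tilde z_k)$ reduces to $\tilde z_k \leq 1$, which follows from $z_0^2 + x^2 \leq z_0^2 + (y-x^2) < 1$, again using $y \geq 2x^2$.

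Iterating the main inequality and observing that the products telescope as $\prod_{k=1}^{N}\beta_{k-1}/\beta_k = \alpha/\beta_N = \alpha$, I obtain
\begin{equation*}
G(x_0,y_0,z_0) \geq \alpha\, G(x,y,z_N) + \sum_{k=1}^{N} w_k\, G(0,0,\tilde z_k),\qquad \sum_{k=1}^{N} w_k = 1-\alpha.
\end{equation*}
By the boundary condition $G(0,0,\tilde z_k) = f(0)$, the second summand simplifies to $(1-\alpha)f(0)$. When $x\neq 0$, the sequence $z_N$ approaches $z_0$ strictly from above, so $\liminf_N G(x,y,z_N) \geq \liminf_{z\to z_0+}G(x,y,z)$, and letting $N\to\infty$ yields \eqref{DesiredConvexity}. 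In the degenerate case $x=0$ the same construction reduces to a single direct splitting of $(0,\alpha y,z_0)$ into $(0,y,z_0)$ and $(0,0,z_0)$, which provides an even stronger bound that implies \eqref{DesiredConvexity}.

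The main obstacle is ensuring that the upward drift in the $z$-coordinate produced by the iterated splittings does not push the ``main'' point $(\beta_k x,\beta_k y,z_k)$ across the upper boundary $\{y-x^2 = 1-z^2\}$ of $\Omega$. This is precisely where the hypothesis $y \geq 2x^2$ is essential: it provides the monotonicity of $\beta\mapsto \beta y - \beta^2 x^2$ that pins its maximum on $[\alpha,1]$ to the endpoint $\beta=1$, thereby creating enough headroom $1-z_0^2 - (y-x^2) > 0$ to absorb the $O(1/N)$ drift in $z_k^2$ uniformly in $k$.
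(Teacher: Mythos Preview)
Your argument is essentially the same as the paper's: iterate the main inequality along the segment from $(x_0,y_0)$ to $(x,y)$, peeling off mass at the origin at each step, and observe that the accumulated drift in $z$ is $O(1/N)$. The only cosmetic difference is that the paper parametrizes the intermediate points by a \emph{geometric} progression $(x_n,y_n)=\lambda^{n/N}(x_0,y_0)$ with $\lambda=\alpha^{-1}$, whereas you use the arithmetic progression $\beta_k=\alpha+k(1-\alpha)/N$; both produce telescoping products equal to $\alpha$, and both give $z_N^2-z_0^2=O(1/N)$.

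One small point: in the degenerate case $x=0$ your direct split yields $G(0,\alpha y,z_0)\geq\alpha G(0,y,z_0)+(1-\alpha)f(0)$, and you call this ``stronger'' than \eqref{DesiredConvexity}. That would require $G(0,y,z_0)\geq\liminf_{z\to z_0+}G(0,y,z)$, which is not guaranteed by the hypotheses (no continuity or monotonicity in $z$ is assumed for a general $G$). The paper's proof has the very same gap, since when $x_0=0$ its sequence also satisfies $z_N\equiv z_0$; in the applications (Theorems~\ref{ThOnChord} and~\ref{ThTanL}) one always has $x\neq 0$ after the parabolic shift, so the issue never arises.
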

\begin{proof}
Let~$\lambda=\alpha^{-1}$, $A=(x,y,z_0)$, and~$A_0=(x_0,y_0,z_0)$. In particular, $(x,y)=\lambda(x_0,y_0)$. 
Let~$N$ be a large number to be specified later. Consider the points $A_n=(x_n,y_n,z_n)$, $n\in\{0,\ldots,N\}$, 
defined consecutively
\eq{
\begin{gathered}
(x_n,y_n) = \lambda^{\frac{n}{N}}(x_0,y_0);\qquad
\label{zndef}z_n^2 = z_{n-1}^2 + (x_n-x_{n-1})^2.
\end{gathered}
}
In other words, the point~$A_{n-1}$ splits into~$A_n$ and~$(0,0,t_n)$, where
\eq{\label{DefinitionOftn}
t_n^2 = z_{n-1}^2 + x_{n-1}^2,
}
according to the rules~\eqref{SplittingRules} (provided we assume $A_n\in\Omega$; we will approve 
this assumption slightly later). We may provide an explicit formula for~$z_n$:
\eq{
z_n^2 = z_0^2 + \sum\limits_{k=0}^{n-1}\lambda^{\frac{2k}N}(\lambda^{\frac1N}-1)^2x_0^2 = 
z_0^2+(\lambda^{\frac1N} - 1)\frac{\lambda^{\frac{2n}N} - 1}{\lambda^{\frac1N}+1}x_0^2.
}
In particular,~$z_N\to z_0+$ when~$N\to \infty$. Therefore,~$A_N \to A$. Since we have assumed strict 
inequality in~\eqref{InDomain}, we have $A_N\in\Omega$ provided $N$ is sufficiently large. 

Since the constructed points satisfy the splitting rules~\eqref{SplittingRules} and
\begin{equation}
    (x_n,y_n) = \lambda^{\frac{1}{N}}(x_{n-1},y_{n-1}),
\end{equation}
we may write the inequalities
\eq{\label{eq337}
G(A_{n-1}) \geq \alpha_N G(A_n) + (1-\alpha_N)f(0),\quad \alpha_{N} = \lambda^{{-}\frac{1}{N}},
}
provided we verify that the points~$A_n$ and~$(0,0,t_n)$ belong to~$\Omega$ for any~$n$. 
We multiply~\eqref{eq337} by~$\alpha_N^{n-1}$, sum over all~$n$, and obtain
\eq{
G(A_0) \geq \alpha G(A_N) + (1-\alpha) f(0),
}
which implies~\eqref{DesiredConvexity} since~$z_N\to z_0+$ when~$N\to \infty$. 

It remains to verify the inequalities $y_n-x_n^2\leq 1-z_n^2$ and $t_{n-1}^2\leq1$ for any $n\in\{0,\ldots,N\}$. 
Note that the quantity~$1-z_n^2$ is a non-increasing function of~$n$ (by~\eqref{zndef}), whereas~$y_n - x_n^2$ 
is non-decreasing (by~\eqref{CurvedParabola} and~\eqref{zndef}). Thus, the inequality
$y_n-x_n^2\leq 1-z_n^2$ for smaller~$n$ is a consequence of the same inequality with~$n=N$; 
the latter inequality follows from~$A_N \in \Omega$. 

The same principle allows to establish the second inequality since~$t_n$ defined in~\eqref{DefinitionOftn}
is an increasing function of~$n$. Thus, it suffices to verify~$t_{N-1} \leq 1$, which is a 
consequence of~$z_N \to z_0+$ and~$z_0^2 + x^2 < 1$. The latter inequality follows from~\eqref{InDomain} 
and~\eqref{CurvedParabola}.
\end{proof}

\begin{Rem}\label{PashaArbitraryPoint}
We may replace the point~$(0,0)$ with an arbitrary point~$(t,t^2)$ with the help of a parabolic 
shift~\eqref{ParabolicShift} in the lemma above. Here the resulting statement is\textup, with the same 
function~$G$. Let~$(x,y) \in \R^2$ be a point such that
\begin{gather}
0\leq y-x^2 < 1-z_0^2;
\\
y - t^2 \geq 2(x-t)^2.
\end{gather} 
Let also~$(x_0,y_0) = \alpha (x,y) + (1-\alpha)(t,t^2),$ where~$\alpha \in (0,1)$. Then\textup, 
\eq{
G(x_0,y_0,z_0) \geq \alpha \liminf_{z\to z_0+}G(x,y,z) + (1-\alpha) f(t).
}
\end{Rem}
\begin{Rem}
The proof may be modified to obtain a priori stronger inequality
\eq{\label{DesiredConvexity_mod}
G(x_0,y_0,z_0) \geq \alpha \limsup_{z\to z_0 +}G(x,y,z) + (1-\alpha) f(t).
}
\end{Rem}

\subsection{Proof of Theorem~\ref{CoincidenceTheorem}}\label{s330}

\begin{Th}\label{ThOnChord}
Let~$f$ be continuous at $a,b \in \mathbb{R}$. Assume that the function $b_{\sqrt{1-z_0^2}}$ is linear along the 
segment~$\ell=\big[(a,a^2),(b,b^2)\big]\subset \omega_{\sqrt{1-z_0^2}}$. 
Then~$\Bell(x_0,y_0,z_0)=b_{\sqrt{1-z_0^2}}(x_0,y_0)$ whenever~$(x_0,y_0) \in \ell$. 
\end{Th}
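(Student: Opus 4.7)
The upper bound $\Bell(x_0,y_0,z_0) \le b_{\sqrt{1-z_0^2}}(x_0,y_0)$ comes from Lemma~\ref{SimpleMajorant}, and the hypothesized linearity of $b_{\sqrt{1-z_0^2}}$ along $\ell$ identifies its right-hand side with $\alpha f(a) + \beta f(b)$, where $\alpha = (b-x_0)/(b-a)$ and $\beta = (x_0-a)/(b-a)$. Hence the task reduces to the lower bound
\begin{equation*}
\Bell(x_0,y_0,z_0) \ge \alpha f(a) + \beta f(b).
\end{equation*}
My plan is to construct, consistent with the splitting rules~\eqref{SplittingRules}, a finite splitting tree rooted at $(x_0,y_0,z_0)$ whose leaves lie on the lower parabola $\{y=x^2\}$ at (or arbitrarily close to) the points $a$ and $b$. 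Iterating Lemma~\ref{PropertiesOfBellmanFunction}(iii) along this tree, together with the boundary condition $\Bell(t,t^2,z)=f(t)$ at each leaf, yields the desired inequality; the first-moment identity $\sum_j w_j x_j = x_0$ combined with $x_j\in\{a,b\}$ forces the total leaf masses at $a$ and $b$ to equal $\alpha$ and $\beta$ respectively.

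Set $\eps := \sqrt{1-z_0^2}$. Since $\ell \subset \omega_\eps$, we have $b-a \le 2\eps$, so at least one of $x_0-a\le\eps$ and $b-x_0\le\eps$ holds; assume the former without loss of generality. If both hold, the one-shot split of $(x_0,y_0,z_0)$ into $(a,a^2,z_a)$ with weight $\alpha$ and $(b,b^2,z_b)$ with weight $\beta$, where $z_a^2 = z_0^2+(x_0-a)^2$ and $z_b^2 = z_0^2+(b-x_0)^2$, satisfies all splitting rules (the $z$-conditions $z_a,z_b\le 1$ reduce precisely to $|x_0-a|,|b-x_0|\le\eps$, while the chord structure handles the $x,y$-conditions automatically), and a single application of Lemma~\ref{PropertiesOfBellmanFunction}(iii) concludes the argument.

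In the remaining case $b-x_0>\eps$, I iterate. Fix a small $p\in(0,1/2)$ and at each stage $n$ split the current chord atom $(x_n,y_n,z_n)$ into $(a,a^2,z_a^{(n)})$ with weight $p$ and a continuation $(x_{n+1},y_{n+1},z_{n+1})$ with weight $1-p$. A direct computation gives the recurrence $\beta_{n+1} = \beta_n/(1-p)$, so $(x_{n+1},y_{n+1})$ stays on $\ell$ and slides geometrically toward $b$; stop at the first $N$ with $x_N\in[b-\eps,a+\eps]$, then apply the one-shot split of the previous paragraph. Using $\Delta x_k = p(x_k-a)/(1-p)$ and $x_k-a = (x_0-a)(1-p)^{-k}$, the budget along a "drop to $a$ at stage $n$" path reduces to the closed-form identity
\begin{equation*}
\sum_{k<n}(\Delta x_k)^2 + (x_n-a)^2 = \frac{(x_0-a)^2}{2-p}\Bigl[2(1-p)^{-2n} - p\Bigr],
\end{equation*}
which, together with the analogous estimate for the final split, stays below $\eps^2$ once $p$ is small enough and $x_N$ is chosen with a small safety margin inside $[b-\eps,a+\eps]$; such a margin is available precisely when $b-a<2\eps$ strictly.

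The main obstacle is the degenerate regime $b-a=2\eps$, where the stopping window collapses to the single point $(a+b)/2$ and no slack remains to absorb the drift. I handle this by a perturbation of the final split: land on $\{a,b-\eta'\}$ (or symmetrically $\{a+\eta',b\}$) for small $\eta'>0$, and repair the second-moment identity $\E\varphi_\infty^2 = y_0$ via a tiny auxiliary split so that the leaves ultimately sit in small neighborhoods of $\{a,b\}$ rather than exactly at these points. As $\eta'\to 0^+$, the leaf masses near $a$ and $b$ converge to $\alpha$ and $\beta$, and continuity of $f$ at $a,b$ converts the $f$-values at the perturbed leaves into $f(a),f(b)$, yielding $\E f(\varphi_\infty) \to \alpha f(a) + \beta f(b)$ and completing the proof. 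This limiting argument is the only place the continuity hypothesis is used.
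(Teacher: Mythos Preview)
Your strategy is close to the paper's, and in the strict case $b-a<2\eps$ it works: with $p$ small enough the stopping window $[b-\eps,a+\eps]$ has positive length, the recursive $z$-budget formula you wrote stays under $\eps^2$, and the terminal one-shot split to $\{(a,a^2),(b,b^2)\}$ is admissible.

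The gap is in the degenerate case $b-a=2\eps$. Your proposed fix ``land on $\{a,b-\eta'\}$ and repair the second moment by an auxiliary split'' does not go through as written. The point $(b-\eta',(b-\eta')^2)$ is not on $\ell$, so you cannot split a point of $\ell$ into $(a,a^2)$ and $(b-\eta',(b-\eta')^2)$ without violating the $y$-averaging in~\eqref{SplittingRules}; and a direct one-step jump $x_0\mapsto b-\eta'$ has increment $|b-\eta'-x_0|$ which, for small $\eta'$, still exceeds $\eps$ whenever $b-x_0>\eps$, so the constraint $S\varphi\le\eps$ is already violated \emph{before} any auxiliary split can act. No tiny correction downstream repairs an upstream budget overrun.

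The paper avoids this by changing the \emph{terminal} split. From a point $(x,y)$ on $\ell$ close to the midpoint $q$ it splits symmetrically to
\[
\big(x\pm\sqrt{y-x^2},\,(x\pm\sqrt{y-x^2})^2\big),
\]
which is admissible as soon as the point lies in $\Omega$ (no extra slack needed), and whose two leaves converge to $a$ and $b$ as $(x,y)\to q$ along $\ell$. The approach to $q$ is done via Remark~\ref{PashaArbitraryPoint}, which is precisely the $p\to0$ limit of your iteration and consumes zero $z$-budget in the limit; continuity of $f$ at $a,b$ is then used to pass to the limit in the leaf values. Your argument can be salvaged by making the same substitution for the final split and letting $p\to0$.
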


\begin{proof}
Let~$q$ be the midpoint of~$\ell$. Then,
\eq{
\Bell(q,z_0) \geq \frac{f(a) + f(b)}{2} =b_{\sqrt{1-z_0^2}}(q),
}
since~$(q,z_0)$ might be split into the points
\eq{
\bigg(a,a^2,\sqrt{z_0^2+\tfrac{(b-a)^2}4}\bigg)\quad\text{and}\quad\bigg(b,b^2,\sqrt{z_0^2+\tfrac{(b-a)^2}4}\bigg)
}
according to the rules~\eqref{SplittingRules} (note that the said points lie in~$\Omega$). 
Thus, by Lemma~\ref{SimpleMajorant}, we have 
\eq{
\Bell(q,z_0) = b_{\sqrt{1-z_0^2}}(q).
} 
Let now~$(x_0,y_0)$ lie on~$\ell$ on the left of~$q$. Remark~\ref{PashaArbitraryPoint} implies
\eq{
\Bell(x_0,y_0,z_0)\geq\alpha\liminf_{z\to z_0+}\Bell(x,y,z)+(1-\alpha)f(a),\qquad\alpha=\frac{x_0-a}{x-a},
}
for any point~$(x,y) \in \ell$ lying arbitrarily close to~$q$. Similar to the reasoning for 
the point~$(q,z_0)$ above, 
\eq{\label{popolam}
\Bell(x,y,z) \geq \frac{f(x-\sqrt{y-x^2}) + f(x+ \sqrt{y-x^2})}{2},
}
which implies (with the same notation~$\alpha = \frac{x_0 - a}{x-a}$)
\eq{
\Bell(x_0,y_0,z_0) \geq  \liminf_{(x,y) \to q} \Big(\alpha\frac{f(x-\sqrt{y-x^2}) + 
f(x+ \sqrt{y-x^2})}{2} + (1-\alpha) f(a)\Big) = b_{\sqrt{1-z_0^2}}(x_0,y_0).
}
\end{proof}

\begin{Th}\label{ThTanL}
Suppose that $f$ is continuous and non-negative\textup, $b_\eps(x,y)$ is continuous as a function 
of~$(x,y,\eps)$ on $\{x^2\leq y\leq x^2+\eps^2, 0<\eps\leq1\}$. Assume that~\eqref{LeftLinearity} 
and~\eqref{eq270104} hold true for any $\eps\in(0,1]$. 
Then\textup,~$\Bell(x_0,y_0,z_0) = b_{\sqrt{1-z_0^2}}(x_0,y_0)$ for all~$(x_0,y_0,z_0)\in\Omega$.
\end{Th}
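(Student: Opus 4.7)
The upper bound $\Bell(x_0,y_0,z_0)\leq b_{\eps}(x_0,y_0)$ with $\eps:=\sqrt{1-z_0^2}$ is immediate from Lemma~\ref{SimpleMajorant}, so the work lies in the reverse inequality. Let $u:=\uL(x_0,y_0)$ so that $(x_0,y_0)$ lies on the left tangent with endpoints $(u,u^2)$ and $(u+\eps,(u+\eps)^2+\eps^2)$, and write $x_0=u+s\eps$ with $s\in[0,1]$. The case $s=0$ is trivial (both functions equal $f(u)$ by the boundary conditions). The plan is to establish the equality first for $s=1$, i.e.\ on the upper boundary of $\Omega$, by a direct application of Lemma~\ref{AlongParabola}, and then to propagate it to the entire tangent via Remark~\ref{PashaArbitraryPoint} together with the monotonicity of $\Bell$ in $z$.

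For $s=1$, so that $y_0-x_0^2=\eps^2$, I would apply Lemma~\ref{AlongParabola} to $G:=\Bell$ moving rightward from $x_0$ along the parabola $y-x^2=\eps^2$ and then send $x\to\infty$. The substitution $\sigma=\tau-\eps$ converts the integral term into $\eps^{-1}\int_0^\infty e^{-t/\eps}f(u+t)\,dt$, which by~\eqref{LeftLinearity} and~\eqref{eq270104} is exactly $f(u)+\eps\,\ml(u)=b_\eps(x_0,y_0)$. The boundary term $e^{-(x-x_0)/\eps}\liminf_{\delta\to 0+}\Bell(x,y-\delta,\sqrt{z_0^2+\delta})$ must be shown to vanish; using Lemma~\ref{SimpleMajorant} to dominate $\Bell$ by $b_\eps$ and the continuity hypothesis to take the limit in $\delta$, this term is bounded by $e^{x_0/\eps-1}\eps^{-1}\int_{x-\eps}^\infty e^{-s/\eps}f(s)\,ds$. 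Continuity, hence finiteness, of $b_\eps$ forces $\int^\infty e^{-s/\eps}f(s)\,ds<\infty$, so this tail tends to $0$ as $x\to\infty$.

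For the interior case $s\in(0,1)$, I would first reduce to $u=0$ by the parabolic shift $\varphi\mapsto\varphi-u$: this replaces $f$ by $\tilde f(\cdot):=f(\cdot+u)$, shifts $\ml$ by $\tilde \ml(\cdot)=\ml(\cdot+u)$, and preserves all hypotheses. In these coordinates $(x_0,y_0)=(s\eps,2s\eps^2)$ lies on the line $y=2\eps x$. For each small $\tau\in(0,1-s)$, the point $(x_\tau,y_\tau):=(\eps(1-\tau),2\eps^2(1-\tau))$ on this line, paired with $z_\tau:=\sqrt{1-\eps^2(1-\tau^2)}$, lies on the upper boundary of $\Omega$, and the condition $y_\tau\geq 2x_\tau^2$ required by Remark~\ref{PashaArbitraryPoint} with $t=0$ reduces to $\tau\geq 0$. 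Setting $\alpha_\tau:=x_0/(\eps(1-\tau))$, the remark yields
\begin{equation*}
\Bell(x_0,y_0,z_0)\geq \alpha_\tau\liminf_{z\to z_0+}\Bell(x_\tau,y_\tau,z)+(1-\alpha_\tau)f(0).
\end{equation*}
By Remark~\ref{Monotonicity} and the first step applied at $(x_\tau,y_\tau,z_\tau)$, the $\liminf$ is at least $b_{\eps\sqrt{1-\tau^2}}(x_\tau,y_\tau)$. Sending $\tau\to 0+$ and using continuity of $b_\eps(x,y)$ as a function of $(x,y,\eps)$, the right-hand side converges to $s\,b_\eps(\eps,2\eps^2)+(1-s)f(0)=f(0)+x_0\,\ml(0)=b_\eps(x_0,y_0)$, as desired. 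The most delicate point will be handling the boundary term at infinity in the upper-boundary step, which requires extracting the integrability of $e^{-s/\eps}f(s)$ from the mere continuity assumption on $b_\eps$; everything else amounts to careful bookkeeping of two splittings and a single limit.
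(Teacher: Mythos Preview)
Your argument is correct and follows essentially the same two-step route as the paper: first establish equality on the upper boundary $y_0-x_0^2=1-z_0^2$ via Lemma~\ref{AlongParabola} with $x\to\infty$, then propagate to the interior using Remark~\ref{PashaArbitraryPoint} along the left tangent together with the monotonicity of $\Bell$ in $z$ (Remark~\ref{Monotonicity}) and the continuity hypothesis on $b_\eps$. The only notable difference is in the handling of the boundary term in the first step: you prove it vanishes by bounding it through Lemma~\ref{SimpleMajorant} and the integrability of $e^{-s/\eps}f(s)$, whereas the paper simply drops it using $\Bell\geq 0$ (which holds since $f\geq 0$), so your phrase ``must be shown to vanish'' overstates what is needed. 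Your preliminary reduction to $\uL=0$ by parabolic shift is also a cosmetic difference; the paper works directly with general $\uL$.
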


\begin{proof}
Let us first consider the case where~$y_0 - x_0^2 = 1-z_0^2$. We apply Lemma~\ref{AlongParabola}, 
drop the second summand (using the positivity of~$\Bell$), and set~$x=\infty$:
\eq{\label{eq270103}
\Bell(x_0,y_0,z_0)\geq\eps^{-1}\!\!\int\limits_{x_0}^\infty \!e^{-\frac{\tau-x_0}{\eps}}f(\tau-\eps)\,d\tau,
\qquad \eps^2 = y_0 - x_0^2 = 1-z_0^2.
}

By our assumptions, the right hand side of~\eqref{eq270103} coincides with $b_{\eps}(x_0,y_0)$, therefore 
\eq{
\Bell(x_0,y_0,z_0) \geq b_{\eps}(x_0,y_0),\quad \eps^2 = y_0 - x_0^2 = 1-z_0^2.
}
By Lemma~\ref{SimpleMajorant}, this inequality is, in fact, an equality.

Consider now the case $y_0-x_0^2<1-z_0^2$. We split~$(x_0,y_0)$ into the convex combination 
of~$(\uL,\uL^2)$ and
\eq{
P = \bigg(\uL+\sqrt{1-z_0^2}, \Big(\uL + \sqrt{1-z_0^{2}}\Big)^2 + 1-z_0^2\bigg),
} 
along the left tangent~$\ell$ to the parabola~$y = x^2 +1-z_0^2$ at~$P$; here
\begin{equation}
\uL = \uL(x_0,y_0){=x_0-\sqrt{1-z_0^2}+\sqrt{1-z_0^2+x_0^2-y_0}}
\end{equation}
is defined in~\eqref{DefUl}.
Remark~\ref{PashaArbitraryPoint} implies
\eq{\label{ConvAlongTang}
\Bell(x_0,y_0,z_0) \geq \alpha \liminf_{z\to z_0+} \Bell(x,y,z) + (1-\alpha) f(\uL), 
\qquad \alpha = \frac{x_0 - \uL}{x-\uL},
}
for any point~$(x,y) \in \ell$ lying arbitrarily close to~$P$. Since the function $\Bell(x,y,\cdot)$ 
is non-increasing (see Remark~\ref{Monotonicity}), we have
\eq{
\Bell(x,y,z) \geq \Bell(x,y,\sqrt{1-y+x^2}) = b_{{\sqrt{y-x^2}}}(x,y),
}
the equality holds by the already considered case. We plug this back into~\eqref{ConvAlongTang}:
\eq{
\Bell(x_0,y_0,z_0) \geq \alpha  b_{{\sqrt{y-x^2}}}(x,y) + (1-\alpha) f(\uL).
}
It remains to note that when~$(x,y) \to P$, the right hand side tends to~$b_{\sqrt{1-z_0^2}}(x_0,y_0)$  
since $b$ is continuous and~$b_{\sqrt{1-z_0^2}}$ is linear along~$\ell$ by our assumptions.
\end{proof}

\begin{Th}\label{ThRTangEps}
Suppose that $f$ is continuous and non-negative\textup, $b_\eps(x,y)$ is continuous as a function 
of~$(x,y,\eps)$ on $\{x^2 \leq y \leq x^2+\eps^2, 0 < \eps \leq 1\}$. Assume that~\eqref{RightLinearity} 
and~\eqref{eq270102} hold true for any $\eps \in (0,1]$. 
Then\textup,~$\Bell(x,y,z) = b_{\sqrt{1-z^2}}(x,y)$ for all~$(x,y,z)\in\Omega$.
\end{Th}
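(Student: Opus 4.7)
The plan is to mirror the proof of Theorem~\ref{ThTanL}, exchanging right tangents for left tangents and exploiting the structural symmetry of the hypotheses. The argument naturally splits into two cases according to whether $(x_0, y_0)$ lies on the upper parabolic boundary of $\omega_{\sqrt{1-z_0^2}}$ or strictly inside it. The reverse inequality $\Bell(x,y,z) \leq b_{\sqrt{1-z^2}}(x,y)$ is always given by Lemma~\ref{SimpleMajorant}, so in each case I only need to establish the matching lower bound on $\Bell$.

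First I would handle the upper-boundary case $y_0 - x_0^2 = 1 - z_0^2$. Set $\eps = \sqrt{1-z_0^2}$; then $(x_0,y_0)$ is the upper endpoint of the right tangent emanating from $(\uR, \uR^2)$ with $\uR = x_0 + \eps$. The key tool is the $x_0 > x$ variant of Lemma~\ref{AlongParabola} recorded in the remark following that lemma, applied with $G = \Bell$. Sending $x \to -\infty$ and discarding the boundary limit term via $\Bell \geq 0$ (Lemma~\ref{PropertiesOfBellmanFunction}(i)), I would obtain
\[
\Bell(x_0,y_0,z_0) \geq \eps^{-1}\int_{-\infty}^{x_0} e^{(\tau - x_0)/\eps} f(\tau + \eps)\,d\tau.
\]
A change of variable $s = \tau + \eps$ together with formula~\eqref{eq270102} for $\mr(\uR)$ rewrites the right-hand side as $f(\uR) - \eps\,\mr(\uR) = f(\uR) + \mr(\uR)(x_0 - \uR)$, which by~\eqref{RightLinearity} is exactly $b_\eps(x_0,y_0)$.

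For the interior case $y_0 - x_0^2 < 1 - z_0^2$, I would split $(x_0,y_0)$ along the right tangent $\ell$ through $(\uR, \uR^2)$ and the upper-boundary endpoint $P = (\uR - \eps, (\uR-\eps)^2 + \eps^2)$, where now $\uR = \uR(x_0,y_0)$. Remark~\ref{PashaArbitraryPoint} applied with $t = \uR$ yields, for points $(x,y) \in \ell$ arbitrarily close to $P$,
\[
\Bell(x_0,y_0,z_0) \geq \alpha \liminf_{z \to z_0+} \Bell(x,y,z) + (1-\alpha) f(\uR), \qquad \alpha = \frac{\uR - x_0}{\uR - x}.
\]
Monotonicity in $z$ (Remark~\ref{Monotonicity}) together with the already settled boundary case gives $\Bell(x,y,z) \geq \Bell(x,y,\sqrt{1-y+x^2}) = b_{\sqrt{y-x^2}}(x,y)$. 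Letting $(x,y) \to P$ along $\ell$ and using the assumed joint continuity of $b_\eps(x,y)$ in $(x,y,\eps)$ together with linearity~\eqref{RightLinearity} of $b_\eps$ along $\ell$, the right-hand side converges to $b_\eps(x_0,y_0)$.

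The main obstacle I anticipate is verifying the curvature hypothesis $y - \uR^2 \geq 2(x - \uR)^2$ demanded by Remark~\ref{PashaArbitraryPoint} for $(x,y)$ on $\ell$ approaching $P$. Parameterizing $\ell$ by $x = \uR - s\eps$, $s \in [0,1]$, the hypothesis reduces to $s \leq 1 - \uR/\eps$, which prevents $(x,y)$ from reaching $P$ arbitrarily closely when $\uR > 0$. This is precisely the mirror image of the obstacle latent in the proof of Theorem~\ref{ThTanL} (where the analogous issue arises for $\uL < 0$), and I expect to resolve it by the same device used there—either by chaining several intermediate splittings or by a continuity/limit argument on $\liminf_{(x,y)\to P}$—so it should not prevent the strategy from going through.
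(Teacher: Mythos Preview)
Your approach is exactly what the paper intends: Theorem~\ref{ThRTangEps} is stated without proof immediately after Theorem~\ref{ThTanL}, and is meant to follow by the mirror argument you describe---replacing left tangents by right tangents and the~$x > x_0$ version of Lemma~\ref{AlongParabola} by the~$x < x_0$ variant recorded in the remark that follows it. Your computations in both the upper-boundary case and the interior case are correct; in particular the identification of the integral~$\eps^{-1}\int_{-\infty}^{x_0} e^{(\tau - x_0)/\eps} f(\tau + \eps)\,d\tau$ with~$f(\uR) - \eps\,\mr(\uR) = b_\eps(x_0,y_0)$ via~\eqref{eq270102} is exactly right.

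You are also right that the curvature hypothesis~$y - \uR^2 \geq 2(x - \uR)^2$ of Remark~\ref{PashaArbitraryPoint} fails near~$P$ when~$\uR > 0$; your parametrization giving the restriction~$s \leq 1 - \uR/\eps$ is accurate. However, your expectation that the paper has already resolved the mirror obstacle in the proof of Theorem~\ref{ThTanL} is too optimistic: if you look carefully, that proof simply asserts that Remark~\ref{PashaArbitraryPoint} applies ``for any point~$(x,y)\in\ell$ lying arbitrarily close to~$P$'' without checking the hypothesis~$y - \uL^2 \geq 2(x - \uL)^2$, which by the same computation fails near~$P$ precisely when~$\uL < 0$. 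So there is no ``device used there'' for you to borrow---the same gap sits in both arguments. A genuine fix (for either theorem) needs an additional step, for instance iterating the construction: push~$(x,y)$ as far along~$\ell$ as the curvature condition permits, invoke the already established upper-boundary identity at the smaller scale~$\eps' = \sqrt{y-x^2}$, and repeat. Neither the paper nor your proposal carries this out.
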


\begin{Th}\label{CupTheorem}
Suppose that $f$ is continuous and non-negative\textup, $b_\eps(x,y)$ is continuous as a function 
of~$(x,y,\eps)$ on $\{x^2 \leq y \leq x^2+\eps^2, 0 < \eps \leq 1\}$. Assume that for any~$\eps \in (0,1]$ 
the function~$b_{\eps}$ has the following structure\textup: there exist some functions~$a$ and $b$ 
that satisfy the properties listed in Theorem~\ref{LeftToRight}\textup, and on the domain~\eqref{om2} 
formula~\eqref{InCup} holds true\textup; formulas~\eqref{LeftLinearity} and~\eqref{RightLinearity} 
\textup(with the coefficients given in~\eqref{mrcup} and~\eqref{mlcup}\textup) define~$b_\eps$ on 
the domains $\om_{1}(\eps)$ and $\om_{3}(\eps)$ given in~\eqref{om1} and~\eqref{om3} respectively. 
Then\textup,
\begin{equation}
\Bell(x_0,y_0,z_0) = b_{\sqrt{1-z_0^2}}(x_0,y_0),\quad (x_0,y_0,z_0) \in \Omega.
\end{equation}
\end{Th}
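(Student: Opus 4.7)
My plan. The forward inequality $\Bell(x_0,y_0,z_0)\le b_{\sqrt{1-z_0^2}}(x_0,y_0)$ is immediate from Lemma~\ref{SimpleMajorant}. For the reverse, fix $(x_0,y_0,z_0)\in\Omega$ and set $\eps:=\sqrt{1-z_0^2}$; I would distinguish three cases according to the position of $(x_0,y_0)$ in the foliation of $\omega_\eps$, and within each tangent case further split off the upper boundary $y_0-x_0^2=\eps^2$ from the interior. The analyses of $\om_1(\eps)$ and $\om_3(\eps)$ are mirror images.

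When $(x_0,y_0)\in\om_2(\eps)$, the point lies on some chord $[(a(l),a^2(l)),(b(l),b^2(l))]$ with $l\in(0,2\eps]$ along which $b_\eps$ is linear, so Theorem~\ref{ThOnChord} applies verbatim and yields $\Bell(x_0,y_0,z_0)=b_\eps(x_0,y_0)$.

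For the upper-boundary case $(x_0,y_0)\in\om_1(\eps)$ with $y_0=x_0^2+\eps^2$ (so $x_0\le\xi:=a(2\eps)+\eps$, where $\xi$ is the $x$-coordinate of the midpoint of the cup's boundary chord $l=2\eps$, which itself lies on the upper parabola $y=x^2+\eps^2$), I would apply Lemma~\ref{AlongParabola} with target $x=\xi$ to get
\[
\Bell(x_0,y_0,z_0)\ge \frac{1}{\eps}\int_{x_0}^{\xi}\!\!e^{-(\tau-x_0)/\eps}f(\tau-\eps)\,d\tau+e^{-(\xi-x_0)/\eps}\liminf_{\delta\to 0+}\Bell\bigl(\xi,\xi^2+\eps^2-\delta,\sqrt{z_0^2+\delta}\bigr).
\]
To bound the liminf from below, split $(\xi,\xi^2+\eps^2-\delta,\sqrt{z_0^2+\delta})$ symmetrically into $(\xi\pm\sqrt{\eps^2-\delta},(\xi\pm\sqrt{\eps^2-\delta})^2,1)$ with weights $\tfrac12$ (an elementary check of the splitting rules~\eqref{SplittingRules} uses $z_0^2+\delta+(\eps^2-\delta)=1$); property~(ii) of Lemma~\ref{PropertiesOfBellmanFunction}, continuity of $f$, and the identities $\xi\mp\eps=a(2\eps),b(2\eps)$ give in the limit $\tfrac12(f(a(2\eps))+f(b(2\eps)))$. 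The substitutions $s=\tau-\eps$, $u=x_0-\eps$, together with $\xi-x_0=a(2\eps)-u$, then identify the combined lower bound with $f(u)+\eps\ml(u)=b_\eps(x_0,y_0)$ via the explicit formula~\eqref{mlcup}. The upper-boundary case for $\om_3(\eps)$ is handled symmetrically using the right-tangent analogue of Lemma~\ref{AlongParabola} (the Remark just after it) and the formula~\eqref{mrcup}.

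For the remaining interior case, $(x_0,y_0)\in\om_1(\eps)\cup\om_3(\eps)$ with $y_0-x_0^2<\eps^2$, I would mirror the proofs of Theorems~\ref{ThTanL} and~\ref{ThRTangEps}: split $(x_0,y_0)$ via Remark~\ref{PashaArbitraryPoint} along the left (resp.\ right) tangent $\ell$ through $(\uL,\uL^2)$ and $P=(\uL+\eps,(\uL+\eps)^2+\eps^2)$ (and the analogous point for $\uR$), let $(x,y)\in\ell$ tend to $P$, use the monotonicity of $\Bell$ in $z$ (Remark~\ref{Monotonicity}) to replace $\Bell(x,y,z)$ by $\Bell(x,y,\sqrt{1-y+x^2})=b_{\sqrt{y-x^2}}(x,y)$ (equality by the already-handled upper-boundary tangent case applied at level $\eps':=\sqrt{y-x^2}$), and pass to the limit using the assumed continuity of $b_\eps$ in all three arguments together with its linearity along $\ell$. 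The main obstacle is the upper-boundary tangent computation: verifying that the sum of the integral from Lemma~\ref{AlongParabola} and the midpoint term $\tfrac12(f(a(2\eps))+f(b(2\eps)))e^{-(\xi-x_0)/\eps}$ reproduces exactly formula~\eqref{mlcup} for $\ml$ (and its mirror~\eqref{mrcup} for $\mr$). This is a careful but routine change of variables. A secondary technical point is checking, in the interior step, that $(x,y)\in\ell$ near $P$ actually lies in $\om_1(\eps')$ for the relevant smaller $\eps'$, so the upper-boundary result applies at level $\eps'$; this follows from continuity of the cup functions $a(\cdot),b(\cdot)$ whenever $\uL(x_0,y_0)<a(2\eps)$ strictly, with the boundary case $\uL(x_0,y_0)=a(2\eps)$ recovered by the already-established cup step.
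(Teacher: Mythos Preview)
Your proposal is correct and follows essentially the same route as the paper: the upper bound comes from Lemma~\ref{SimpleMajorant}, the cup region is handled by Theorem~\ref{ThOnChord}, the upper-boundary tangent points are treated via Lemma~\ref{AlongParabola} with target the midpoint $Q=(\xi,\xi^2+\eps^2)$ together with the symmetric split~\eqref{popolam}, and the interior tangent points are done exactly as in Theorem~\ref{ThTanL}. Your explicit identification of the lower bound with $f(u)+\eps\,\ml(u)$ via~\eqref{mlcup} is precisely the ``direct computation'' the paper alludes to, and your secondary technical worry is harmless since the full upper boundary (cup and tangent parts alike) has already been settled at every level~$\eps'$.
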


\begin{proof}
Fix~$z_0$, $\eps  = \sqrt{1-z_0^2}$, and consider the function~$b_{\eps}$ on~$\omega_{\eps}$. 
By Lemma~\ref{SimpleMajorant} we only need to prove
\eq{\label{eq270105}
\Bell(x_0,y_0,z_0) \geq b_\eps(x_0,y_0).
}

If~$(x_0,y_0)\in \om_2(\eps)$ (see~\eqref{om2}), then~\eqref{eq270105} follows from Theorem~\ref{ThOnChord}. 
In particular, for
\eq{\label{eq351}
Q = \Big(\frac{a(2\eps) + b(2\eps)}{2}, \frac{a^2(2\eps) + b^2(2\eps)}{2}\Big)
}
we have 
\eq{
\Bell(Q,z_0) \geq \frac{f(a(2\eps)) + f(b(2\eps))}{2} = b_{\sqrt{1-z_0^2}}(Q).
}

Now we deal with other points that satisfy~$y-x^2=\eps^2$. Let~$(x_0,y_0)$ with $y_0-x_0^2=\eps^2$ lie on 
the left of~$Q$ (the other case is completely similar). We apply Lemma~\ref{AlongParabola} with~$(x,y) = Q$:
\mlt{\label{TwoPointEstimateFromBelow}
\Bell(x_0,y_0,z_0) \geq \eps^{-1}\!\int\limits_{x_0}^x e^{-\frac{\tau-x_0}{\eps}} f(\tau-\eps)\,d\tau + 
e^{-\frac{x-x_0}{\eps}}\liminf_{\delta\to 0+} \Bell\big(x,y-\delta,\sqrt{z_0^2 + \delta}\big) \Geqref{popolam}
\\
\eps^{-1}\!\int\limits_{x_0}^x e^{-\frac{\tau-x_0}{\eps}} f(\tau-\eps)\,d\tau + 
e^{-\frac{x-x_0}{\eps}}\liminf_{\delta\to 0+} \frac{f(x-\sqrt{\eps^2-\delta}) + f(x+\sqrt{\eps^2-\delta})}{2} = 
\\
\eps^{-1}\!\int\limits_{x_0}^x e^{-\frac{\tau-x_0}{\eps}} f(\tau-\eps)\,d\tau + 
e^{-\frac{x-x_0}{\eps}}\frac{f(x-\eps) + f(x+\eps)}{2}.
}
A direct computation shows that the right hand side coincides with~$b_{\sqrt{1-z_0^2}}(x_0,y_0)$ described 
by Theorem~\ref{LeftToRight}. Thus, we have proved~\eqref{eq270105} for the points satisfying $y_0-x_0^2=\eps^2$.

If~$(x_0, y_0)$ lies inside~$\om_1(\eps)$ or~$\om_3(\eps)$ (see~\eqref{om1} and~\eqref{om3}), 
then~\eqref{eq270105} is proved by the same method as we used to prove Theorem~\ref{ThTanL}. 
\end{proof}

\begin{proof}[Proof of Theorem~\textup{\ref{CoincidenceTheorem}}]
If $f''$ is non-decreasing, the theorem follows from Theorems~\ref{TheoremLeftTangents} and~\ref{ThTanL}. 
If $f''$ is non-increasing, it follows from Theorems~\ref{ThRTang} and~\ref{ThRTangEps}. In the last 
case, when $f''$ changes its monotonicity, we rely upon Theorems~\ref{LeftToRight} and~\ref{CupTheorem}.
\end{proof}

\begin{proof}[Proof of Corollary~\textup{\ref{p-momentCorollary}}]
By the very definition,
\begin{equation}\label{eq160201}
c_p = \sup\limits_{0\leq y \leq 1-z^2} \Bell^{\frac1p}(0,y,z),
\end{equation}
where the function~$\Bell$ is constructed from the boundary condition~$f(t) = |t|^p$. 
Despite the fact that $f$ does not fulfill the standard requirements, the corresponding Bellman functions 
$b_\eps$ are described by the same formulas as in Theorem~\ref{LeftToRight} (see~\cite{SlavinVasyunin2012}). 
Therefore, by Theorem~\ref{CupTheorem} and Remark~\ref{Monotonicity}, the supremum in~\eqref{eq160201} 
coincides with 
\begin{equation}
\sup\limits_{0\leq y \leq 1} \Bell^{\frac1p}(0,y,0) = \sup\limits_{0\leq y\leq 1} b_{1}^{\frac1p}(0,y).
\end{equation}
The latter supremum equals~$1$ since~$b_1(0,y) = y^\frac{p}{2}$ in this case.
\end{proof}
\begin{proof}[Proof of Corollary~\textup{\ref{exponentialmomentCorollary}}]
We consider the function~$\Bell$ constructed for~$f(t) = e^{\eps t}$ and observe that
\begin{equation}
C(\eps) = \sup\limits_{0\leq y\leq 1}\Bell(0,y,0).
\end{equation}
This case falls under the scope of Theorem~\ref{CoincidenceTheorem}. Similar to the previous proof,
\begin{equation}
C(\eps) = \sup\limits_{0\leq y\leq 1} b_{1}(0,y) = \frac{e^{-\eps}}{1-\eps},
\end{equation}
as it may be derived from the exact formula for the latter function (see either 
Theorem~\ref{TheoremLeftTangents}, or the original paper~\cite{SlavinVasyunin2011}). 
\end{proof}
Finally, we present a local version of Theorem~\ref{CupTheorem}, which may be obtained by the same proof.

\begin{Th}\label{BigTheorem}
Let $0<\eps_1<\eps_2 \leq 1$. Suppose that there are continuous functions 
$a,\,b\colon[2\eps_1,2\eps_2]\to\mathbb{R}$ 
such that $a$ is decreasing\textup, $b$ is increasing\textup, and $b(l)-a(l)=l$ for $l\in[2\eps_1,2\eps_2]$. 
Let $\tl,\,\tr\in\mathbb{R}$ satisfy inequalities $\tl<a(2\eps_2),$ $b(2\eps_2)<\tr$. Suppose that $f$ is 
continuous on $[\tl, a(2\eps_1)]\cup [b(2\eps_1),\tr]$. Assume that for any~$\eps\in[\eps_1,\eps_2]$ 
the function~$b_{\eps}$ satisfies the following properties\textup: 
\begin{itemize}
	\item formula~\eqref{LeftLinearity} with the coefficients given in~\eqref{mlcup} holds on the domain
$$
\om_1(\eps;\tl) = \Set{(x,y)\in \omega_\eps}{\tl\leq \uL(x,y) \leq a(2\eps)};
$$
	\item formula~\eqref{InCup} holds on any chord $\big[(a(l),a^2(l)),(b(l),b^2(l))\big],$ $l\in[2\eps_1,2\eps],$ 
these chords foliate a domain we denote by~$\om_2(\eps;\eps_1);$
	\item formula~\eqref{RightLinearity} with the coefficients given in~\eqref{mrcup} holds on the domain
$$
\om_3(\eps;\tr) = \Set{(x,y)\in \omega_\eps}{b(2\eps) \leq \uR(x,y)\leq  \tr}.
$$
\end{itemize}
Assume that $b_\eps(x,y)$ is continuous as a function of $(x,y,\eps)$ on the domain
$$
\om=\Set{(x,y)\in\omega_\eps}{(x,y)\in\om_1(\eps;\tl)\cup\om_2(\eps;\eps_1)\cup\om_3(\eps;\tr),\;
\eps_1\leq\eps\leq\eps_2}.
$$ 	
Then\textup,
\begin{equation}
\Bell(x,y,z) = b_{\sqrt{1-z^2}}(x,y),\qquad \big(x,y,\sqrt{1-z^2}\big) \in \om.
\end{equation}
\end{Th}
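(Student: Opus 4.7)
The proof runs in parallel to that of Theorem~\ref{CupTheorem}, with the extra bookkeeping being only to make sure that every auxiliary point stays inside the local domain~$\om$. The bound $\Bell(x,y,z)\leq b_{\sqrt{1-z^2}}(x,y)$ is automatic from Lemma~\ref{SimpleMajorant}, since that inequality holds on all of~$\Omega$ independently of the local structure of~$b_\eps$. So the whole task is to prove the reverse inequality for every $(x,y,z)$ with $(x,y,\sqrt{1-z^2})\in\om$. Fix such a triple and set $\eps=\sqrt{1-z^2}\in[\eps_1,\eps_2]$; we split into three cases according to the subregion of~$\om$ that contains $(x,y)$.

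If $(x,y)\in\om_2(\eps;\eps_1)$, it lies on some chord $[(a(l),a^2(l)),(b(l),b^2(l))]$ with $l\in[2\eps_1,2\eps]$, and Theorem~\ref{ThOnChord} gives $\Bell(x,y,z)=b_\eps(x,y)$ verbatim; the endpoints $a(l)$ and $b(l)$ lie in the ranges of $a$ and $b$ where $f$ is assumed continuous, so no issue arises. Next, for a point on the upper parabola $y-x^2=\eps^2$ that lies in~$\om_1(\eps;\tl)$ (the $\om_3$-case is symmetric), let $Q_\eps$ denote the midpoint of the chord at level $l=2\eps$; a short computation shows $Q_\eps$ also lies on $y=x^2+\eps^2$ and, since $\uL(x,y)\leq a(2\eps)$, the point $x$ lies to the left of $Q_\eps$. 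I apply Lemma~\ref{AlongParabola} along the arc from $(x,y,z)$ to $Q_\eps$ and combine it with the two-point split $\Bell(Q_\eps,z)\geq\tfrac12(f(a(2\eps))+f(b(2\eps)))$, which is valid because $(a(2\eps),a^2(2\eps),1)$ and $(b(2\eps),b^2(2\eps),1)$ both lie in~$\Omega$. The same direct calculation as in Theorem~\ref{CupTheorem} shows that the resulting lower bound equals $b_\eps(x,y)$ as given by \eqref{LeftLinearity} and \eqref{mlcup}.

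Finally, for an interior point of $\om_1(\eps;\tl)$ with $y-x^2<\eps^2$, I reproduce the argument of Theorem~\ref{ThTanL}: I write $(x,y)$ as a convex combination of $(\uL,\uL^2)$ and a point $(x',y')$ arbitrarily close to $P=(\uL+\eps,(\uL+\eps)^2+\eps^2)$ along the left tangent, invoke Remark~\ref{PashaArbitraryPoint}, then use the monotonicity in~$z$ (Remark~\ref{Monotonicity}) together with the equality on the upper parabola just established in the previous paragraph. What makes this step legal in the local setting is that $\uL(P)=\uL(x,y)\in[\tl,a(2\eps)]$, so $P$ (and nearby boundary points) sit inside the sub-region $\om_1(\eps;\tl)$ where the previous step is available. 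The assumed continuity of $b_\eps(x,y)$ in $(x,y,\eps)$ lets me pass to the limit $(x',y')\to P$ and recover $b_\eps(x,y)$ exactly.

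The genuine difficulty is of a bookkeeping nature: one must verify that every auxiliary point used along the way, the midpoint $Q_\eps$, the tangent endpoint $P$, and each chord endpoint $(a(l),a^2(l))$, $(b(l),b^2(l))$, is located inside $\om$ so that the hypothesized local formulas for $b_\eps$ are applicable. The strict inequalities $\tl<a(2\eps_2)$ and $b(2\eps_2)<\tr$ and the monotonicity of $a$ and $b$ provide exactly the slack required: for each $\eps\in[\eps_1,\eps_2]$ one has $a(2\eps)\in(\tl,a(2\eps_1)]$ and $b(2\eps)\in[b(2\eps_1),\tr)$, so that $f$ is continuous at every value of its argument that actually appears in the proof.
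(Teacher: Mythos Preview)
Your proposal is correct and follows precisely the route the paper intends: the paper itself supplies no separate argument for Theorem~\ref{BigTheorem}, stating only that it ``may be obtained by the same proof'' as Theorem~\ref{CupTheorem}, and you have faithfully reproduced that proof with the appropriate localisation. The three-case split (chord region via Theorem~\ref{ThOnChord}, upper arc via Lemma~\ref{AlongParabola} terminating at the midpoint~$Q_\eps$, interior of the tangent domains via the method of Theorem~\ref{ThTanL}) and your verification that every auxiliary value of~$f$ falls inside $[\tl,a(2\eps_1)]\cup[b(2\eps_1),\tr]$ are exactly what is needed.
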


\section{Case $f(t) = \chi_{_{[0, +\infty)}}(t)$ and sharp tail estimates}\label{S4}
In this section, we will present the proofs of Theorem~\ref{th_bellman_is_less_b_eps} and 
Corollary~\ref{cor_sharp_tail}. In other words, we will describe the trace of the Bellman 
function~\eqref{Bellman} with $f(t)=\chi_{_{[0,+\infty)}}(t)$ on~$\Omega_R$ defined in~\eqref{OmegaR}.

The exposition is organized as follows. We start with solving an auxiliary optimization problem, which we call 
the model problem, in Subsection~\ref{s41_model}. Subsection~\ref{s42_foliation} contains the proof of 
Theorem~\ref{th_bellman_is_less_b_eps}, the solution of the model problem from the previous subsection plays the 
crucial role there. Finally, we establish Corollary~\ref{cor_sharp_tail} in Subsection~\ref{s44_sharp}.

\subsection{Model problem}\label{s41_model} 
\subsubsection{Setting}
Consider the domain
\begin{equation}
\OmSmile = \{(x,y) \in \R^2 \,\big{|}\, 2x^2 \le y \le x^2+1,\ x\in [-1,1]\}.
\end{equation} 
We say that a function~$R\colon \OmSmile\to \mathbb{R}$ satisfies the main inequality of the 
\emph{model problem} provided
\begin{equation}\label{model_problem_dynamic}
\begin{gathered}
R(x,y) \ge \alpha_+ R(x_+, y_+) + \alpha_- R(x_-, y_-), \quad \text{where}
\\
x=\alpha_+ x_+ +\alpha_-x_-,\quad y=\alpha_+ y_+ +\alpha_-y_-,\quad\frac{y_+-y_-}{x_+-x_-}=2x,
\\
\alpha_+ +\alpha_-=1,\quad\alpha_0,\alpha_1\in(0,1),\quad\text{and}
\quad (x,y),\;(x_+,y_+),\;(x_-,y_-)\in\OmSmile,
\end{gathered}
\end{equation}
for any choice of the parameters. Geometrically, the main inequality of the model problem is 
the usual convexity condition when the point~$(x,y)$ splits into~$(x_+,y_+)$ and~$(x_-,y_-)$ 
along the tangent to the parabola~${\bf y} = {\bf x}^2 + c$ passing through~$(x,y)$.

We posit the model problem: find the pointwise minimal function~$\Bs$ among all function $R\colon\OmSmile\to\R$ 
that satisfy the main inequality of the model problem and the boundary conditions
\begin{equation}\label{BCModel}
R(x,2x^2) = h(x),\quad x\in [-1,1],\qquad \hbox{where}\ 
h(x) =
\begin{cases}
1,\quad x \ge 0;
\\
\frac{1}{2},\quad x < 0.
\end{cases}
\end{equation}

\begin{Rem}
One may consider a similar homogeneous extremal problem on a larger domain~$\{y\geq 2x^2\}$ 
\textup(with the same boundary value~$h$\textup). It is easy to see that the restriction 
to~$\OmSmile$ of the solution of this new problem coincides with~$\Bs$. Thus\textup,
\begin{equation}\label{Homogeneity}
\Bs(\lambda x, \lambda^2 y) = \Bs(x,y),\quad (x,y)\in\OmSmile,\ 0<\lambda \leq 1.
\end{equation} 
\end{Rem}

\subsubsection{Parametrization and differential equation}\label{s412_foliation}
The domain $\OmSmile$ can be split into the parabolic arcs
\begin{equation}
P_c = \big\{(x,x^2+c^2)\big| \, |x| \le c\big\}, \quad  0 \le c \le 1.
\end{equation} 
By the homogeneity relation~\eqref{Homogeneity}, it suffices to focus on the case~$c = 1$ and determine 
the values of~$\Bs$ on~$P_1$.

Consider a parametrization $(\uuu(t),\vvv(t))$ of the arc~$P_1$. More specifically, we consider 
two functions $\uuu$ and $\vvv=\uuu^2+1$ defined on $[1,+\infty]$ such that $\uuu$ increases, 
$\uuu(1) = -1$, and $\uuu(+\infty)=1$. We split every point~$(\uuu(t),\vvv(t)) \in P_1$ 
into~$(x_+(t), y_+(t))$ lying on the boundary $\{y = 2x^2, x\geq 0\}$ and an infinitesimally 
close point~$(x_-(t), y_-(t))$, according to the rules~\eqref{model_problem_dynamic}  
(see Figure~\ref{fig:model_problem}). We will search for the function~$\mmm\colon\OmSmile\to\mathbb{R}$ 
satisfying homogeneity relation~\eqref{Homogeneity} (with $\mmm$ instead of $\Bs$) for which 
the main inequality ``turns into equality'' along the said splitting. It will appear that 
the function $\mmm$ constructed in such a way satisfies the equation
\begin{equation}\label{grad_direction}
\mmm(\uuu,\vvv)+\Big\langle\nabla\mmm\big(\uuu,\vvv\big),\left(x_+-\uuu,y_+-\vvv\right)\Big\rangle=1. 
\end{equation}
Note that the parametrization has not been specified yet. It will be specified in 
Subsubsection~\ref{ss413} below. 

The trace of $\mmm$ on $P_1$ will be denoted by $\Psi$:
\eq{\label{PsiDefNew}
\Psi(x) = \mmm(x,x^2+1), \quad x \in [-1,1]; \qquad \mmm(x,y) = 
\Psi\Big(\frac{x}{\sqrt{y-x^2}}\Big), \quad (x,y) \in \OmSmile.
}

\begin{figure}
\begin{center}
\includegraphics[width=0.5\columnwidth]{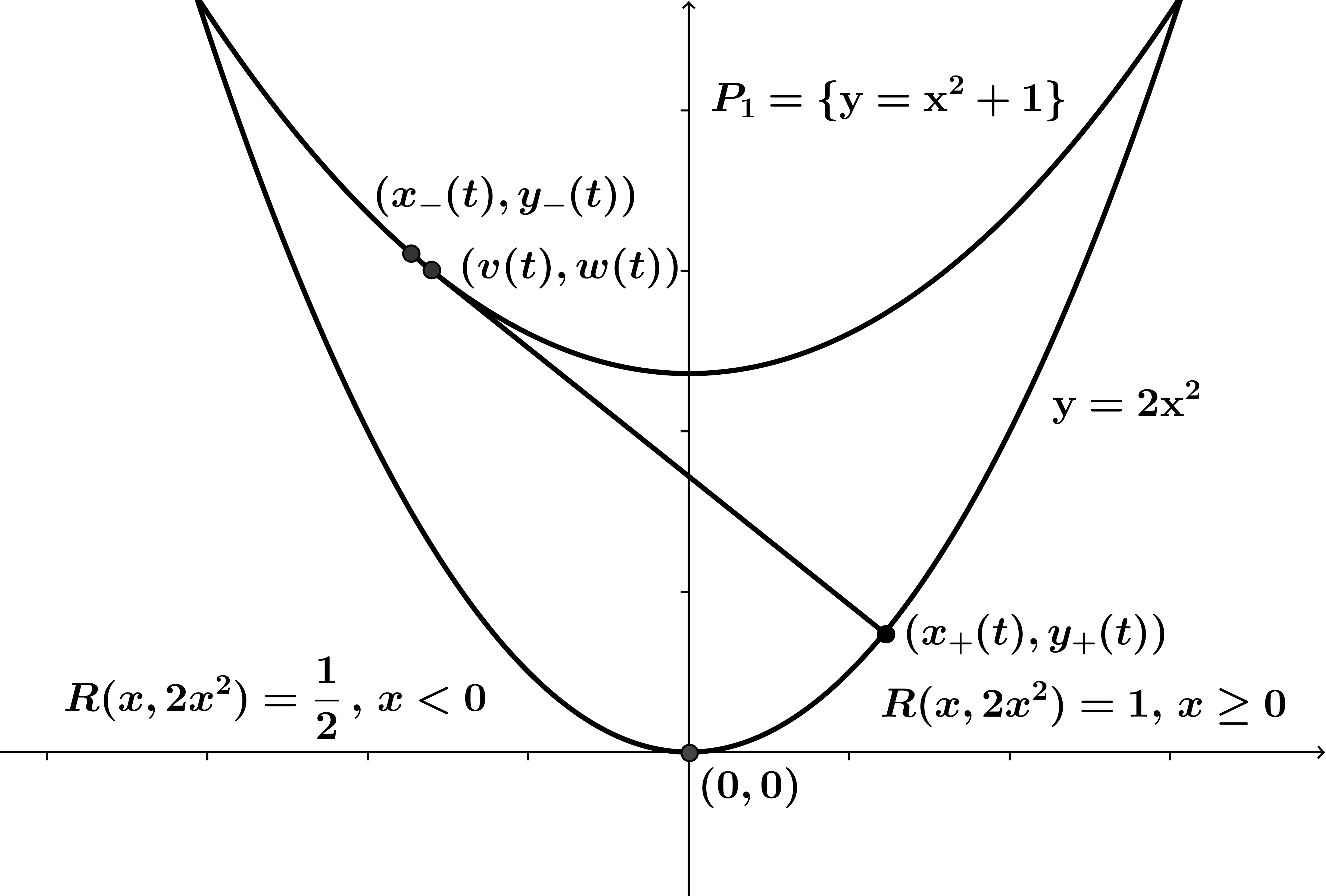}
\caption{Illustration to the model problem}
\label{fig:model_problem}
\end{center}
\end{figure}

Recall the boundary values~\eqref{BCModel}. We will search for the functions $\uuu,\vvv$, and $\Psi$ in the form 
\begin{equation}\label{U_V_parabola}
\uuu(t)=\frac1t\int\limits_0^t\varphi(s)\,ds,\qquad\vvv(t)=\frac1t\int\limits_0^t2\varphi^2(s)\,ds, 
\qquad\Psi(\uuu(t)) = \frac{1}{t} \int\limits_0^t h(\varphi(s))\,ds, \qquad  t\geq 1,
\end{equation}
where $\varphi$ satisfies the conditions
\begin{equation}
\varphi\colon [0,\infty)\to\mathbb{R},\qquad\varphi(t)=-1\,\text{ for }t\in[0,1);\qquad\varphi(1)=0; 
\qquad\varphi(t)\ge0\,\text{ for }t\ge 1.
\end{equation}
We also require $\varphi$ to be a non-decreasing function. One may check that $x_+(t) = \varphi(t)$, 
$y_+(t) = 2 \varphi^2(t)$ in the sense that the tangent vector to the curve~$(\uuu(t),\vvv(t))$ points 
to~$(\vf(t),2\vf^2(t))$, and
\begin{gather}
\uuu'(t)=-\frac{\uuu(t)}t+\frac{\vf(t)}t,\qquad \vvv'(t)=-\frac{\vvv(t)}t+\frac{2\vf^2(t)}t, 
\label{model_UVG_deriv}
\\
\label{eq411}\frac{d}{dt}\mmm(\uuu(t),\vvv(t)) = -\frac{\mmm(\uuu(t),\vvv(t))}{t}+\frac{h(\vf(t))}{t}. 
\end{gather}
We rewrite the left hand side of~\eqref{eq411}:
\begin{multline}\label{eq412}
\frac{d}{dt}\mmm(\uuu(t),\vvv(t)) = \langle\nabla\mmm(\uuu(t),\vvv(t)),(\uuu'(t),\vvv'(t))\rangle = 
\\
-\frac1t\langle\nabla\mmm(\uuu(t),\vvv(t)),(\uuu(t)-\varphi(t),\vvv(t)-2\vf(t)^2)\rangle,
\end{multline}
then plug~\eqref{eq412} into~\eqref{eq411} taking into account that~$h(\varphi(t)) = 1$ 
for~$t \geq 1$, and obtain~\eqref{grad_direction}.

\subsubsection{Solution of differential equation}\label{ss413}
Using~\eqref{U_V_parabola} and the relation~$\vvv = \uuu^2 + 1$, we can write down 
the following chain of equalities:
\begin{equation}
t(\uuu^2(t)+1) = t\vvv(t) = 2\int_0^t\vf^2(s)\,ds \stackrel{\scriptscriptstyle \eqref{model_UVG_deriv}}{=} 
2\int_0^t \big(\uuu(s)+s\uuu'(s)\big)^2 ds.
\end{equation}
We differentiate this relation and obtain
\begin{equation*}
\uuu^2(t)+1+2t\uuu(t)\uuu'(t) = 2(\uuu(t)+t\uuu'(t))^2,
\end{equation*}
or
$$
(\uuu(t)+2t\uuu'(t))^2 = 2-\uuu^2(t).
$$
We are looking for increasing functions $\varphi$ and $\uuu$.
Thereby, we have to solve the following Cauchy problem
\begin{equation*}
2t\uuu'(t) = -\uuu(t)+\sqrt{2-\uuu^2(t)},\qquad \uuu(1)=-1,\qquad t\geq1,
\end{equation*}
or
$$
\frac{dt}t=\frac{2d\uuu}{\sqrt{2-\uuu^2}-\uuu}\,.
$$
Hence
\begin{multline*}
\log t = \int_{-1}^\uuu \frac{2}{\sqrt{2-z^2}-z}\,dz\;=\int_{-\pi/4}^{\arcsin(\frac{\uuu}{\sqrt2})} 
\!\!\frac{2}{\cos\theta-\sin\theta}\cos\theta\,d\theta = 
\\
\Big(\theta-\log(\cos\theta-\sin\theta)\Big)\Big|_{-\pi/4}^{\arcsin(\frac{\uuu}{\sqrt{2}})}
=\arcsin\left(\frac{\uuu}{\sqrt{2}}\right)+\frac{\pi}{4}+\frac{\log 2}{2} 
- \log\left(\sqrt{1-\frac{\uuu^2}2}-\frac{\uuu}{\sqrt{2}}\right).
\end{multline*}
Therefore,
\begin{equation}
\label{310101}
t=\frac2{\sqrt{2-\uuu^2}- \uuu}\,e^{\arcsin\left(\frac{\uuu}{\sqrt{2}}\right)+\frac{\pi}{4}}.
\end{equation}
Note that $t$ runs from $1$ to $+\infty$ as $\uuu$ runs from $-1$ to $1$.

Now, we are able to compute $\Psi(\uuu(t))$. Recall that $\varphi(t)=-1$, when $t\in[0,1)$, and 
$\varphi(t)>0$ for $t>1$. Therefore, for $t\in[0,1)$ we have
$$
\Psi(\uuu(t)) = \mmm(-1,2) = \frac12\,.
$$
For $t>1$ we use the last formula in \eqref{U_V_parabola} and the definition of the function 
$h$ in~\eqref{BCModel} and deduce
\begin{equation}\label{Bs_by_t}
\Psi(\uuu(t))=\frac1t\int\limits_0^1\frac12\,ds + \frac1t\int\limits_1^t 1\,ds = 1-\frac1{2t}.    
\end{equation}
If we plug here the solution found in~\eqref{310101}, we get
\begin{equation}\label{psi_def}
\Psi(\uuu)=1-\frac{\sqrt{2-\uuu^2}-\uuu}4e^{-\arcsin\left(\frac{\uuu}{\sqrt{2}}\right)-\frac{\pi}{4}},
\qquad\uuu\in[-1,1].
\end{equation}
By the homogeneity relation~\eqref{PsiDefNew} for an arbitrary point $(x,y)\in\OmSmile$ we have 
\begin{equation}\label{model_G_def}
\mmm(x,y) =  1-\frac{\sqrt{1-\ttt^2}-\ttt}{2\sqrt{2}}e^{-\arcsin \ttt-\frac{\pi}{4}}, 
\quad \text{ where }\ \ttt = \ttt(x,y) = \frac{x}{\sqrt{2(y-x^2)}}.
\end{equation}
We have finished the construction of the function $\mmm$ and now will prove 
that it solves the model problem. 

\subsubsection{Verification of the main inequality}

We would like to prove that the function~$\mmm$ defined in~\eqref{model_G_def} satisfies 
the main inequality~\eqref{model_problem_dynamic} of the model problem. We will not do this directly, 
but rather rely upon a principle similar to Lemma~\ref{main_under_tangent_Le}. We omit the proof 
of the following lemma because it is completely similar to the proof of Lemma~\ref{main_under_tangent_Le}. 

\begin{Le}\label{under_tangent_Le} Assume~$\Gw\colon\OmSmile\to\mathbb{R}$ is differentiable 
on~$\OmSmile$ and satisfies the inequality 
\begin{equation}\label{model_problem_dynamics_dif_form}
\Gw(\xx, \yy) \le \Gw(x_0,y_0) + \frac{\partial \Gw}{\partial x}(x_0,y_0)\cdot (\xx - x_0) + 
\frac{\partial \Gw}{\partial y}(x_0,y_0) \cdot(\yy - y_0)
\end{equation}
for every points $(x_0, y_0)\in\OmSmile$ and $(\xx,\yy)\in\OmSmile$ such that $\yy-y_0=2x_0(\xx-x_0)$. 
Then\textup,~$\Gw$ satisfies the main inequality of the model problem~\eqref{model_problem_dynamic}.
\end{Le}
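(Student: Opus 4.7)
The plan is to mimic the very short argument used for Lemma~\ref{main_under_tangent_Le}: apply the given tangent-plane inequality at the two split points, multiply by their weights, and let the barycentric identities make the linear correction terms vanish. The only thing to verify beforehand is that the hypothesis ``$\yy - y_0 = 2 x_0 (\xx - x_0)$'' of the lemma is actually fulfilled for each of the two branches of the splitting.

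Concretely, I would fix an admissible splitting of the model problem, i.e., points $(x,y),(x_+,y_+),(x_-,y_-)\in\OmSmile$ and weights $\alpha_\pm\in(0,1)$ with $\alpha_++\alpha_-=1$, $\alpha_+x_++\alpha_-x_-=x$, $\alpha_+y_++\alpha_-y_-=y$, and $(y_+-y_-)/(x_+-x_-)=2x$. The key geometric observation is that
\begin{equation*}
y_+ - y = \alpha_-(y_+ - y_-), \qquad x_+ - x = \alpha_-(x_+ - x_-),
\end{equation*}
so $(y_+-y)/(x_+-x) = (y_+-y_-)/(x_+-x_-) = 2x$, i.e., $y_+ - y = 2x(x_+-x)$. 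The symmetric computation gives $y_- - y = 2x(x_- - x)$. Thus both pairs $((x,y),(x_\pm,y_\pm))$ satisfy the tangent relation required to invoke hypothesis~\eqref{model_problem_dynamics_dif_form} with $(x_0,y_0)=(x,y)$.

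Applying~\eqref{model_problem_dynamics_dif_form} twice gives
\begin{equation*}
\Gw(x_\pm, y_\pm) \le \Gw(x,y) + \frac{\partial\Gw}{\partial x}(x,y)(x_\pm - x) + \frac{\partial\Gw}{\partial y}(x,y)(y_\pm - y).
\end{equation*}
I would then multiply the two inequalities by $\alpha_+$ and $\alpha_-$ respectively and add them. The coefficients of $\partial_x\Gw(x,y)$ and $\partial_y\Gw(x,y)$ collapse to $\alpha_+(x_+-x)+\alpha_-(x_--x)=0$ and $\alpha_+(y_+-y)+\alpha_-(y_--y)=0$ by the balance conditions, yielding
\begin{equation*}
\alpha_+\Gw(x_+,y_+) + \alpha_-\Gw(x_-,y_-) \le \Gw(x,y),
\end{equation*}
which is exactly the main inequality~\eqref{model_problem_dynamic} of the model problem.

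There is essentially no obstacle here; the only conceptual point is noticing that the slope condition on the chord is precisely what matches the tangent-relation hypothesis of the lemma once one moves the basepoint to the center $(x,y)$. This is why the authors declare the proof ``completely similar'' to that of Lemma~\ref{main_under_tangent_Le} and omit it.
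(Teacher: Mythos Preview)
Your proof is correct and is precisely the adaptation of the proof of Lemma~\ref{main_under_tangent_Le} that the paper alludes to when it omits the argument as ``completely similar.'' The one extra ingredient you needed---checking that the chord slope condition $(y_+-y_-)/(x_+-x_-)=2x$ forces each branch to satisfy the tangent relation $y_\pm-y=2x(x_\pm-x)$---is exactly the point that makes the analogy go through.
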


\begin{Le}\label{VerifMainIneq}
The function~$\mmm$ as in~\eqref{model_G_def} satisfies~\eqref{model_problem_dynamics_dif_form} in the 
role of~$R$\textup, i.\,e.\textup, the inequality 
\begin{equation}\label{B_under_tangent}
\mmm(\xx, \yy) \le \mmm(x_0,y_0) +  \frac{\partial \mmm}{\partial x}(x_0,y_0)\cdot (\xx -x_0)+ 
\frac{\partial \mmm}{\partial y}(x_0,y_0)\cdot (\yy-y_0)
\end{equation}
holds true for any~$(x_0,y_0)\in\OmSmile$ and~$(\xx,\yy)\in\OmSmile$ such that~$\yy-y_0=2x_0(\xx-x_0)$.
\end{Le}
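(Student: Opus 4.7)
The strategy is to reduce, via the homogeneity~\eqref{Homogeneity}, to the case $(x_0,y_0)\in P_1$, then express the tangent inequality along the tangent line as a one-variable statement in $\ttt$, and verify it by combining the explicit formula for $\Psi$ with the first-order ODE it satisfies.

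\emph{Homogeneity reduction.} Since $\mmm(\lambda x,\lambda^2 y)=\mmm(x,y)$ for $\lambda>0$, the partial derivatives $\mmm_x$ and $\mmm_y$ are $\lambda^{-1}$- and $\lambda^{-2}$-homogeneous, respectively. The tangency condition $\yy-y_0=2x_0(\xx-x_0)$ is preserved under the scaling $(x,y)\mapsto(\lambda x,\lambda^2 y)$, and a short check shows that~\eqref{B_under_tangent} is invariant. Taking $\lambda=1/\sqrt{y_0-x_0^2}$ (the degenerate case $y_0=x_0^2$ only occurs at $(x_0,y_0)=(0,0)$ in $\OmSmile$, where the inequality is trivial), we may assume $y_0=x_0^2+1$, i.e., $(x_0,y_0)\in P_1$. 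Setting $s=\xx-x_0$ and $\yy=y_0+2x_0 s$, the parameter $\ttt(s)=(x_0+s)/\sqrt{2(1-s^2)}$ is strictly monotone on the interval $I$ of values of $s$ for which $(\xx,\yy)\in\OmSmile$; the endpoints of $I$ correspond to the intersections of the tangent line with $\{y=2x^2\}$, namely $\vf(t_0)=(x_0+\sqrt{2-x_0^2})/2$ on the right and $(x_0-\sqrt{2-x_0^2})/2$ on the left. A short computation gives the directional derivative of $\ttt$ along $(1,2x_0)$ at $(x_0,y_0)\in P_1$ equal to $1/\sqrt{2}$, so, using $\mmm=\Psi\circ\ttt$, the inequality~\eqref{B_under_tangent} is equivalent to
\[
F(s):=\Psi(\ttt_0)+\frac{\Psi'(\ttt_0)}{\sqrt{2}}\,s-\Psi(\ttt(s))\geq 0,\qquad s\in I,\quad \ttt_0:=\frac{x_0}{\sqrt{2}}.
\]

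\emph{Structural identities.} The PDE~\eqref{grad_direction} encoding the construction of $\mmm$ translates in the variable $\ttt$ to the first-order ODE $\Psi(\ttt)+\frac{\sqrt{1-\ttt^2}-\ttt}{2}\Psi'(\ttt)=1$, which is consistent with the closed form $\Psi'(\ttt)=e^{-\arcsin\ttt-\pi/4}/\sqrt{2}$ used in~\eqref{psi_def}. Geometrically, this identity is exactly the statement that the tangent plane at $(x_0,y_0)$ attains the boundary value $h(\vf(t_0))=1=\mmm(\vf(t_0),2\vf(t_0)^2)$; hence, besides the tangency equalities $F(0)=F'(0)=0$, $F$ also vanishes at the right endpoint of $I$ corresponding to $\xx=\vf(t_0)$.

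\emph{Main obstacle and conclusion.} The principal difficulty is the absence of global concavity: a direct computation of $g''=\Psi''(\ttt)(\ttt')^2+\Psi'(\ttt)\ttt''$ with $g=\Psi\circ\ttt$ shows that $g''$ changes sign inside $I$, so the inequality $F\geq 0$ cannot be obtained from concavity of $\Psi\circ\ttt$ alone. Instead, substituting the closed form of $\Psi'$ into $F$ expresses it as an elementary trigonometric-exponential function of $s$; one then (i) verifies $F''(0)>0$ for $|x_0|<1$ so that $s=0$ is a strict local minimum of $F$ (the endpoint cases $x_0=\pm 1$ are handled by continuity), (ii) checks $F\geq 0$ at the left endpoint of $I$ by reducing it to the elementary inequality $e^{\theta_0+\pi/4}\geq\sqrt{2}\cos\theta_0$ for $\theta_0=\arcsin\ttt_0\in[-\pi/4,\pi/4]$, and (iii) analyses the zeros of $F'$ to show that the only zeros of $F$ on $I$ are the equality points $s=0$ and $s=\vf(t_0)-x_0$ already identified; combined with the boundary sign information this forces $F\geq 0$ throughout $I$.
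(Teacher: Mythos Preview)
Your reduction by homogeneity to $(x_0,y_0)\in P_1$ is valid, and the one-variable reformulation $F(s)\ge 0$ is correctly set up (modulo a notational clash: what you call $\Psi$ is the function of $\ttt=x/\sqrt{2(y-x^2)}$ from~\eqref{model_G_def}, not the paper's $\Psi$ from~\eqref{psi_def}, which is parametrized by $\uuu=x/\sqrt{y-x^2}$; this is harmless but should be clarified). Steps~(i) and~(ii) are correct and checkable: indeed $F''(0)>0$ reduces to $1>x_0\sqrt{2-x_0^2}$, i.e., $(x_0^2-1)^2>0$, and the left-endpoint inequality reduces to $e^{\theta_0+\pi/4}\ge\sqrt{2}\cos\theta_0$, which holds with equality at $\theta_0=-\pi/4$ and is strict otherwise by convexity.

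The problem is step~(iii). You state that one ``analyses the zeros of $F'$ to show that the only zeros of $F$ on $I$ are $s=0$ and $s=\vf(t_0)-x_0$'', but this analysis is the entire content of the lemma and you have not performed it. After the substitution $\theta=\arcsin\ttt(s)$ (so that the endpoints of $I$ always correspond to $\theta=\pm\pi/4$), one has
\[
F'(s)=\frac{1}{\sqrt{2}}\Big(e^{-\theta_0-\pi/4}-e^{-\theta-\pi/4}\,\frac{1+sx_0}{(1-s^2)^{3/2}}\Big),
\]
and bounding the number of zeros of this function on $I$ requires a non-trivial argument that is simply absent from your write-up. The facts you have established---$F(0)=F'(0)=0$, $F''(0)>0$, $F(s_-)\ge 0$, $F(s_+)=0$---do \emph{not} by themselves rule out an excursion of $F$ below zero on $(0,s_+)$ or on $(s_-,0)$; one needs, for instance, a bound on the number of sign changes of $F''$, which you neither state nor prove.

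For comparison, the paper avoids this one-variable analysis entirely. It introduces, for each $c>0$, an auxiliary function $G_{\mathrm R}(\,\cdot\,,\,\cdot\,,c)$ (and symmetrically $G_{\mathrm L}$) that is \emph{linear} along each segment joining a point $(\vr,\vr^2+c^2)$ on $P_c$ to the corresponding boundary point $(\ur,2\ur^2)$, and agrees with $\mmm$ at both endpoints. The right-hand side of~\eqref{B_under_tangent} is then identified with $G_{\mathrm R}(\xx,\yy,c_0)$ and the left-hand side with $G_{\mathrm R}(\xx,\yy,\cc)$, where $\cc=\sqrt{\yy-\xx^2}<c_0$; the inequality follows from $\partial_c G_{\mathrm R}\ge 0$, which is verified by an explicit (long but purely algebraic) computation ending in a manifestly non-negative rational expression. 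This monotonicity-in-$c$ device replaces the zero-counting you propose and is what makes the argument go through.
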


\begin{proof}
{\bf Case $\xx > x_0$.} 

Let $c\in (0,1]$. For any point $(x,y)$ such that~$(x,y)\in\OmSmile$ and $\max(-2cx,x^2)\leq y\leq x^2+c^2$ 
we find two numbers $\ur$ and $\vr$ such that $\vr\leq x\leq \ur$ and 
$$
\frac{2\ur^2 - \vr^2 - c^2}{\ur-\vr} = \frac{y-\vr^2-c^2}{x-\vr} = 2\vr,
$$
see Figure~\ref{fig:UVpoints}.
\begin{figure}
    \centering	
    \includegraphics[width=0.5\columnwidth]{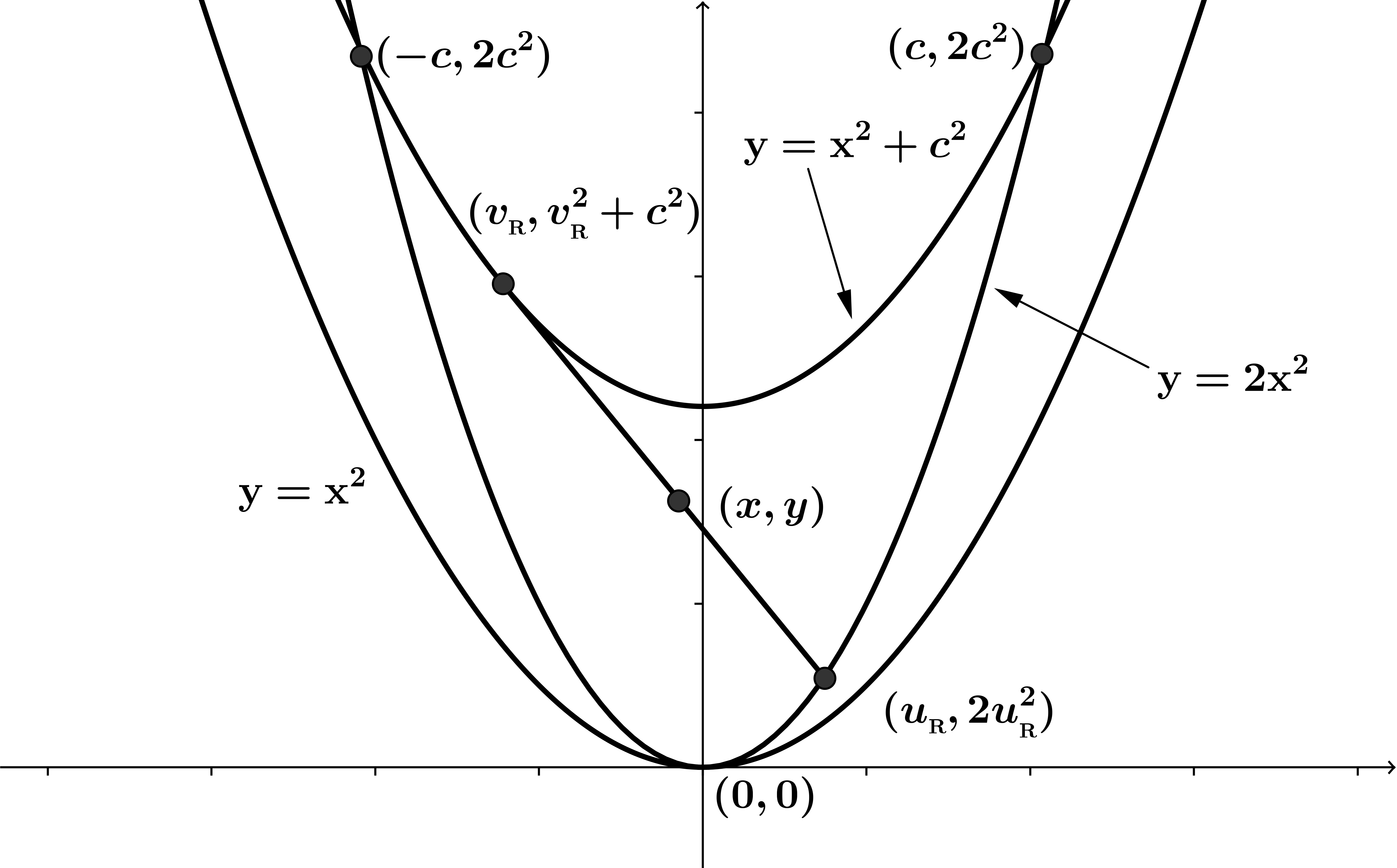}
    \caption{Definition of $\ur$ and $\vr$.}
    \label{fig:UVpoints}
\end{figure}

We deduce that
\begin{equation}\label{u_v_x1_x2_def}
\vr = \vr(x,y,c) = x - \sqrt{x^2+c^2-y},\qquad \ur= \ur(x,y,c) = \frac{\vr+\sqrt{2c^2-\vr^2}}{2}.
\end{equation}

We introduce the function $\GR$ defined on the domain 
$$
\Big\{(x,y,c)\,\Big|\; c\in(0,1], \, \max(-2cx,x^2)\leq y\leq x^2+c^2\Big\}
$$ 
by the formula:
\begin{equation}\label{G_c_def}
\GR(x,y,c) = \frac{\ur-x}{\ur-\vr}\Psi\left(\frac{\vr}{c}\right) + \frac{x-\vr}{\ur-\vr} = 
\frac{\ur-x}{\ur-\vr}\left(\Psi\left(\frac{\vr}{c}\right)-1\right)+1,
\end{equation}
where $\ur = \ur(x,y,c)$, $\vr = \vr(x,y,c)$, and~$\Psi$ was defined in~\eqref{PsiDefNew} and got 
its explicit form in~\eqref{psi_def}. For any $c$ fixed the function $\GR(\,\cdot\,,\,\cdot\,,c)$ 
is linear along each segment connecting the points $(\vr,\vr^2+c^2)$ and $(\ur,2\ur^2)$, 
and coincides with $\mmm$ at their endpoints. Also from the construction of $\mmm$ 
(see formula~\eqref{grad_direction}) we deduce that the function $\GR$ has the following property:
\begin{equation}
\mmm(x_0,y_0) +  \frac{\partial \mmm}{\partial x}(x_0,y_0)\cdot (\xx -x_0)+ 
\frac{\partial \mmm}{\partial y}(x_0,y_0)\cdot 2x_0 (\xx - x_0) = \GR(\xx,\yy,c_0)
\end{equation}
with $c_0 = \sqrt{y_0-x_0^2}$.

On the other hand, for $\cc = \sqrt{\yy-\xx^2}$ we have $\vr(\xx,\yy,\cc)=\xx$, and therefore
$\GR(\xx,\yy,\cc) = \mmm(\xx,\yy)$. It may be  seen that $\cc < c_0$:
$$
\cc^2=\yy-\xx^2=y_0-x_0^2-(\xx-x_0)^2<y_0-x_0^2=c_0^2\,.
$$
Thus, to prove~\eqref{B_under_tangent} it suffices to show that the function $\GR(\xx,\yy,c)$ 
does not decrease in~$c$. In other words, we wish to verify the inequality 
\begin{equation}\label{MonotonicityGR}
\frac{\partial \GR(x,y,c)}{\partial c} \geq 0.
\end{equation}

It follows from \eqref{u_v_x1_x2_def} that
\begin{equation}\label{v_c}
\frac{\partial \vr}{\partial c} = - \frac{c}{\sqrt{x^2+c^2-y}} = \frac{c}{\vr-x},
\end{equation}
\begin{equation}\label{u_c}
\frac{\partial \ur}{\partial c} = \frac{1}{2} \left( \frac{\partial \vr}{\partial c} +
\frac{4c - 2\vr \frac{\partial \vr}{\partial c} }{2\sqrt{2c^2 -\vr^2}} \right) = 
\frac{1}{2} \left(\frac{c}{\vr-x} + \frac{2c-\frac{c\vr}{\vr-x}}{2\ur-\vr} \right)=
\frac{c(\ur-x)}{(\vr-x)(2\ur-\vr)}.
\end{equation}
We differentiate~\eqref{G_c_def}  and obtain 
\begin{equation*}
\frac{\partial \GR(x,y,c)}{\partial c} = \frac{\frac{\partial \ur}{\partial c}(\ur-\vr) - 
(\frac{\partial \ur}{\partial c}-\frac{\partial \vr}{\partial c})(\ur-x)}{(\ur-\vr)^2}
\left(\Psi\left(\frac{\vr}{c}\right) - 1 \right) + 
\frac{\ur-x}{\ur-\vr}\cdot\frac{\frac{\partial \vr}{\partial c} c - \vr}{c^2}\Psi'\left(\frac{\vr}{c}\right).
\end{equation*}
Formula \eqref{psi_def} for the function $\Psi$ implies
\begin{equation}\label{d_psi}
\Psi'(\uuu) = \frac{1}{2} e^{-\pi/4 - \arcsin\frac{\uuu}{\sqrt{2}}}.   
\end{equation}{}
Using~\eqref{psi_def} and \eqref{d_psi}, we continue the evaluation of 
$\frac{\partial \GR(x,y,c)}{\partial c}$ and obtain that it equals to
\begin{equation*}
\left(\frac{\frac{\partial\ur}{\partial c}(\ur-\vr-\ur+x)+\frac{\partial\vr}{\partial c}(\ur-x)}{4(\ur-\vr)^2}
\left(\frac{\vr}{c} - \sqrt{2-\frac{\vr^2}{c^2}} \right) + 
\frac{\ur-x}{\ur-\vr}\cdot\frac{\frac{\partial \vr}{\partial c} c - \vr}{2c^2} \right)
e^{-\arcsin\big(\frac{\vr}{\sqrt{2}c}\big)-\frac{\pi}{4}}. 
\end{equation*}
We may omit the exponent multiplier since we are interested in the sign of the expression 
$\frac{\partial \GR(x,y,c)}{\partial c}$ only.
Note that relation~\eqref{u_v_x1_x2_def} yields 
\begin{equation*}
\frac{\vr}{c} - \sqrt{2-\frac{\vr^2}{c^2}} = 2 \frac{v_R - u_R}{c}. 
\end{equation*}
Applying~\eqref{v_c} and  \eqref{u_c}, we continue the computation
\begin{align*}
\frac{1}{(\ur-\vr)^2} &\left( (x-\vr)\frac{c(\ur-x)}{(\vr-x)(2\ur-\vr)} + 
\frac{c(\ur-x)}{\vr-x}\right) \frac{2(\vr-\ur)}{4c} + 
\frac{\ur-x}{2c^2(\ur-\vr)}\left(\frac{c^2}{\vr-x} - \vr\right) 
\\
&=\rule{0pt}{22pt}\frac{\ur -x}{2(\ur-\vr)^2} \left(\frac{1}{\vr -2\ur}+\frac{1}{\vr-x}\right)(\vr-\ur) 
-\frac{\ur-x}{2(\vr-\ur)}\left(\frac{1}{\vr-x} - \frac{\vr}{c^2}\right)
\\
&=\rule{0pt}{22pt}\frac{\ur-x}{2(\vr-\ur)} \left( \frac{1}{\vr-2\ur} + \frac{\vr}{c^2}\right)
=\frac{\ur-x}{2c^2(\ur-\vr)(2\ur-\vr)} \big( (\vr-\ur)^2 + c^2 - \ur^2\big), 
\end{align*}
which is non-negative because $0 \leq \ur \le c$ and $\vr\leq x\leq \ur$. This finishes the proof 
of~\eqref{MonotonicityGR}.

\bigskip
{\bf Case~$\xx < x_0$.}

We will construct another auxiliary function $\GL$ in the following way. Let $c>0$. For any point $(x,y)$ such that~$(x,y)\in \OmSmile$ and~$\max(2c x, x^2)\leq y \leq x^2+c^2$, we 
find two numbers $\ul$ and $\vl$ such that $x \leq \vl \leq \ul$ and 
$$
\frac{2\ul^2 - \vl^2 - c^2}{\ul-\vl} = \frac{ y-\vl^2-c^2}{ x-\vl} = 2\vl,
$$
see Figure~\ref{fig:UVL}.
\begin{figure}
    \centering
    \includegraphics[width=0.5\columnwidth]{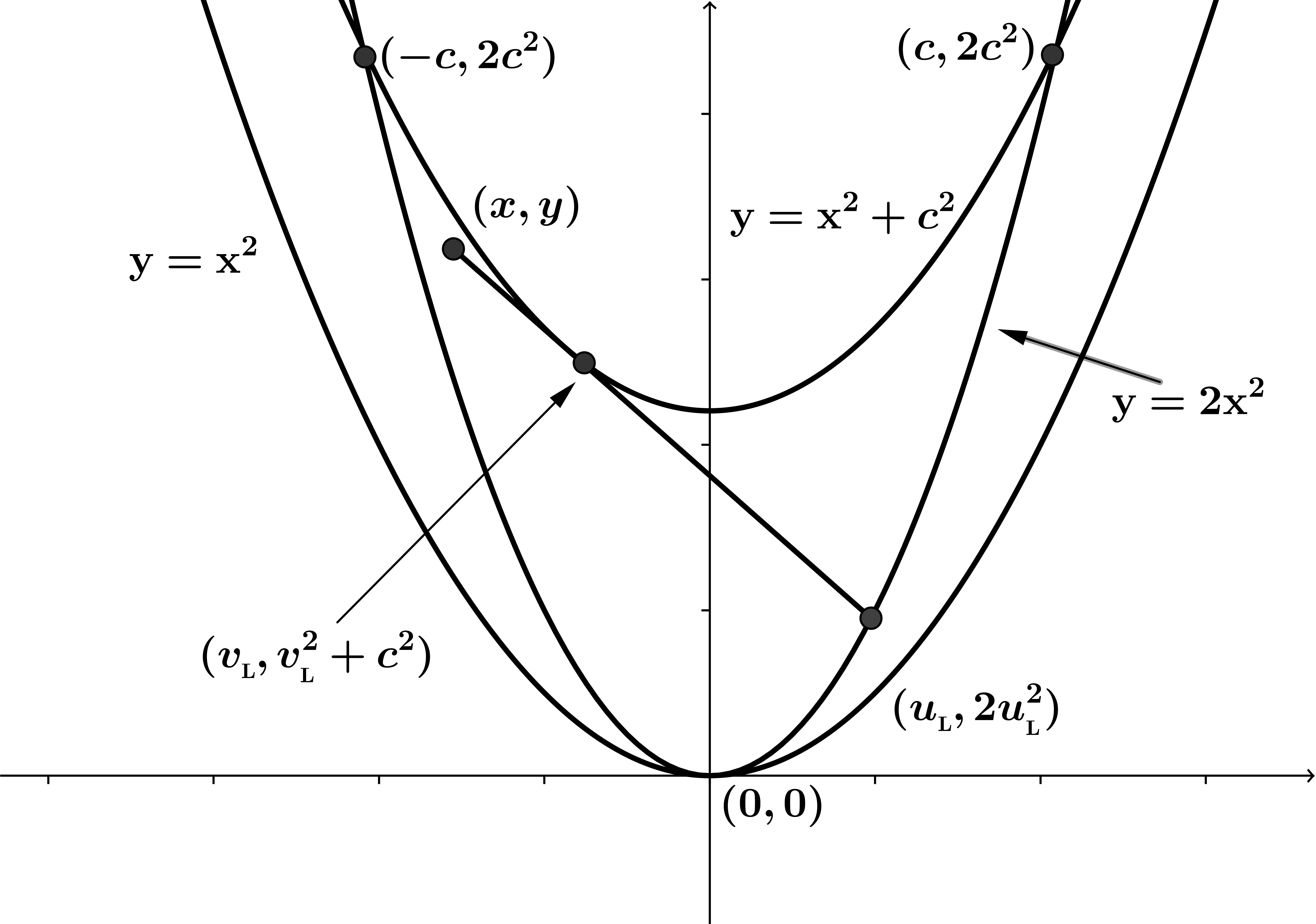}
    \caption{Definition of $\ul$ and $\vl$.}
    \label{fig:UVL}
\end{figure}
After some calculations, we get
\begin{equation}\label{u_v_x1_x2_defL}
\vl = \vl(x,y,c) =  x+\sqrt{x^2+c^2-y},\qquad\ul=\ul(x,y,c) = \frac{\vl+\sqrt{2c^2-\vl^2}}{2}.
\end{equation}

We introduce the function $\GL$ defined on the domain 
$$
\Big\{(x,y,c) \,\Big|\; c\in(0,1], \, \max(2cx,x^2)\leq y\leq x^2+c^2\Big\}
$$ 
by the formula:
\begin{equation}\label{GL_c_def}
\GL(x,y,c) = \frac{\ul-x}{\ul-\vl}\Psi\left(\frac{\vl}{c}\right) + \frac{x-\vl}{\ul-\vl} = 
\frac{\ul-x}{\ul-\vl}\left(\Psi\left(\frac{\vl}{c}\right)-1\right)+1.
\end{equation}
For any $c$ fixed the function $\GL(\,\cdot\,,\,\cdot\,,c)$ is linear on the extension of the segment 
connecting the points $(\vl,\vl^2+c^2)$ and $(\ul,2\ul^2)$ beyond the point $(\vl,\vl^2+c^2)$, 
and coincides with $\mmm$ at these two points. Also from the construction of $\mmm$ 
(see~\eqref{grad_direction}) we deduce that the function $\GL$ satisfies the following property:
\begin{equation}
\mmm(x_0,y_0) + \frac{\partial \mmm}{\partial x}(x_0,y_0)\cdot (\xx -x_0)+
\frac{\partial \mmm}{\partial y}(x_0,y_0)\cdot 2x_0 (\xx - x_0) = \GL(\xx,\yy,c_0), 
\end{equation}
for $c_0 = \sqrt{y_0-x_0^2}$. 

On the other hand, for $\cc = \sqrt{\yy-\xx^2}$ we have $\GL(\xx,\yy,\cc) = \mmm(\xx,\yy)$. 
Again, we have $\cc < c_0$. Thus, it suffices to show that the function $\GL(\xx,\yy,c)$ increases 
in~$c$, i.\,e., the inequality
\begin{equation}\label{eq250101}
\frac{\partial \GL(x,y,c)}{\partial c} \geq 0.
\end{equation}

From equations \eqref{u_v_x1_x2_defL} we obtain
\begin{equation}\label{v_cL}
\frac{\partial \vl}{\partial c} = \frac{c}{\sqrt{x^2+c^2-y}} = \frac{c}{\vl-x}\,.
\end{equation}
We note that the right hand side of~\eqref{v_cL} coincides with the right hand side of~\eqref{v_c} 
($\vr$ is simply replaced with $\vl$), and $\ul$ is defined by $\vl$ exactly in the same way as $\ur$ 
was defined by $\vr$. The same calculations as we have already done in the proof of~\eqref{MonotonicityGR} 
lead us to the fact that~\eqref{eq250101} is equivalent to  
\begin{equation}
\frac{\ul-x}{c^2(\ul-\vl)(2\ul-\vl)} \big((\vl-\ul)^2+ c^2-\ul^2\big) \ge 0, 
\end{equation}
which holds true because $x \leq \vl \leq \ul$ and $0 \leq \ul \leq c$.
\end{proof}

\subsubsection{Minimality}

Lemmas~\ref{under_tangent_Le} and~\ref{VerifMainIneq} imply~$\mmm(x,y)\geq\Bs(x,y)$ for any~$(x,y)\in\OmSmile$. 
Now we wish to prove the reverse inequality.

\begin{Le}\label{model_tangent_lemma}
Let~$\Bs$ be the solution of the model problem and let~$\mmm$ be the function defined in~\eqref{model_G_def}. 
Then\textup, for any~$(x,y)\in \OmSmile,$ we have~$\Bs(x,y) \geq \mmm(x,y)$.
\end{Le}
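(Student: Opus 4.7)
The plan is to realize $\mmm(x_0,y_0)$ as the limit of an admissible iterative splitting of $(x_0,y_0)$; applied iteratively, the main inequality~\eqref{model_problem_dynamic} for $\Bs$ then forces $\Bs(x_0,y_0)\geq\mmm(x_0,y_0)$. By the homogeneity~\eqref{Homogeneity}, it suffices to prove the inequality on $P_1$, so I fix $(x_0,y_0)=(\uuu(t_0),\vvv(t_0))$ with $t_0\in[1,\infty)$ in the parametrization~\eqref{310101}.

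The key observation, already central to the foliation of Subsubsection~\ref{s412_foliation}, is that the tangent to $P_1$ at $(\uuu(t),\vvv(t))$ has slope $2\uuu(t)$---precisely the slope required by~\eqref{model_problem_dynamic}---and meets the lower boundary $\{y=2x^2\}$ at $(\varphi(t),2\varphi(t)^2)$. Thus for any small $\delta>0$, the point $(\uuu(t),\vvv(t))$ admits an \emph{exact} split along this tangent into the boundary point $(\varphi(t),2\varphi(t)^2)$, with weight $\alpha=\delta/(\varphi(t)-\uuu(t)+\delta)$, and a residual $R$ lying on the tangent inside $\OmSmile$ on the sub-parabola $P_{c}$ with $c=\sqrt{1-\delta^2}$; one application of~\eqref{Homogeneity} rescales $R$ to a point on $P_1$ at a new parameter $t'<t$. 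Iterating this construction produces a decreasing chain $t_0>t_1>\cdots>t_N=1$, terminating at the corner $(\uuu(1),\vvv(1))=(-1,2)$, which itself already lies on the lower boundary with value $h(-1)=\tfrac12$. The iterated main inequality then yields
\begin{equation*}
\Bs(x_0,y_0)\;\geq\;\sum_{k=0}^{N-1}w_k\,h(\varphi(t_k))\;+\;w_N\cdot\tfrac12,
\end{equation*}
and, to leading order, the residual weights $r(t)=\prod(1-\alpha_j)$ satisfy the ODE $dr/dt=r/t$ (with $t$ decreasing), giving $w_N\to 1/t_0$ and $\sum w_k\,h(\varphi(t_k))\to\tfrac1{t_0}\int_1^{t_0}h(\varphi(t))\,dt=1-\tfrac1{t_0}$ (since $h\circ\varphi=1$ on $[1,t_0]$). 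Hence the right-hand side converges to $1-\tfrac1{2t_0}=\Psi(\uuu(t_0))=\mmm(x_0,y_0)$ by~\eqref{Bs_by_t}.

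The main technical obstacle is the rigorous control of the discretization error introduced by the homogeneity rescaling at each step: the rescaled residual does not land at an evenly-spaced parameter of the continuous ODE~\eqref{eq411}, and the relation $dr/dt=r/t$ holds only to leading order in $\delta$. I expect this to be a careful but routine calculus argument---showing that with a uniformly fine partition $\max_k\delta_k\to 0$ the cumulative error vanishes and the Riemann sums converge to the integral representation~\eqref{U_V_parabola} of $\Psi\circ\uuu$. Combined with the opposite inequality $\mmm\geq\Bs$ (which follows from Lemmas~\ref{under_tangent_Le} and~\ref{VerifMainIneq}), this yields $\Bs=\mmm$ on $\OmSmile$.
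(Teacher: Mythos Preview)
Your approach is correct and rests on the same geometric idea as the paper's proof---splitting a point on~$P_1$ along its tangent (the admissible direction in~\eqref{model_problem_dynamic}) into the lower-boundary point $(\varphi(t),2\varphi(t)^2)$ and a residual, then invoking homogeneity~\eqref{Homogeneity} to return to~$P_1$. The execution, however, is different. The paper does not iterate: at each point~$(\uuu(t_0),\vvv(t_0))$ it applies the main inequality \emph{once}, with the nearby point taken via~\eqref{060201}, and extracts from this the lower-derivative inequality $\frac{d_-}{dt}[t\,\rrr(t)]\geq 1$ for $\rrr(t)=\Bs(\uuu(t),\vvv(t))$; it then proves continuity of~$\rrr$ separately (using a second splitting, this time toward the \emph{left} lower-boundary intersection) and integrates against~\eqref{Bs_by_t}. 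Your discretized version trades the continuity lemma for the Riemann-sum error analysis you flag as routine; the paper's differential formulation avoids tracking cumulative errors across many steps and is correspondingly shorter.

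One small point: your iteration cannot literally ``terminate at the corner $(-1,2)$'' in finitely many steps, since the admissibility constraint $y_-\geq 2x_-^2$ forces $\delta\to 0$ as $\uuu\to -1$. The clean fix is to stop at~$t_N>1$, observe that one more application of~\eqref{model_problem_dynamic}---splitting $(\uuu(t_N),\vvv(t_N))$ into the two lower-boundary intersections $\big(\tfrac{\uuu\pm\sqrt{2-\uuu^2}}{2},\cdot\big)$---gives $\Bs(\uuu(t_N),\vvv(t_N))\geq\tfrac12$, and then let $t_N\to 1$ after the mesh goes to zero. This also shows that no continuity of~$\Bs$ is needed in your route, which is a genuine simplification relative to the paper's argument.
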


\begin{proof}
Due to the homogeneity relation~\eqref{Homogeneity}, it suffices to consider the case~$(x,y)\in P_1$. 
Let~$\rrr(t) = \Bs(\uuu(t),\vvv(t))$, here we use the parametrization $(\uuu(t),\vvv(t))$ of $P_1$ 
introduced in Subsubsection~\ref{s412_foliation}. We will show that $\rrr$ is continuous and for every 
$t\ge 1$ the inequality
\begin{equation}\label{G_GW_ineq_t}
\frac{d_-}{dt}[t\ \rrr(t)] \ge 1
\end{equation}
holds true. By~$\frac{\ d_-}{dt}$ we mean the lower derivative, that is
\begin{equation*}
\frac{d_- r}{dt}(t_0) =\liminf\limits_{t\to t_0}\frac{r(t) - r(t_0)}{t-t_0}.
\end{equation*}

Once~\eqref{G_GW_ineq_t} is proved, we may use formula~\eqref{Bs_by_t} that implies
\eq{   
\frac{d_-}{dt}\left[t \Big(\rrr(\uuu(t)) - \Psi(\uuu(t))\Big)\right]  \ge 0.
}
This yields the desired estimate~$\Bs(\uuu(t),\vvv(t)) - \mmm(\uuu(t),\vvv(t)) \geq 0$, 
because the two functions in question are continuous and are equal at $t=1$.

The proof of~\eqref{G_GW_ineq_t} and the continuity of $\rrr$ will take some time.  
Fix a point $(\uuu_0,\vvv_0)$, where $\vvv_0=\uuu_0^2+1$ and $\uuu_0=\uuu(t_0)$ for some $t_0 \in(1,\infty)$.
Draw the tangent line through $(\uuu_0,\vvv_0)$ to the upper boundary of~$\OmSmile$:
\eq{\label{090201}
y=2\uuu_0x+1-\uuu_0^2.
}
Take two more points on this line $(x_\pm,y_\pm)$, where one of them is the right point of intersection
with the lower boundary of $\OmSmile$, i.\,e., 
$$
x_+=\vf(t_0)=\frac{\uuu_0+\sqrt{2-\uuu_0^2}}2\,,\qquad y_+=2\vf^2(t_0)=1+\uuu_0\sqrt{2-\uuu_0^2}\,,
$$
and the second is defined as follows:
\begin{equation}
\label{060201}
x_-=\frac{\uuu\uuu_0+\sqrt{1+\uuu^2-\uuu_0^2}}{1+\uuu^2}\uuu\,,
\qquad y_-=\frac{\big(\uuu\uuu_0+\sqrt{1+\uuu^2-\uuu_0^2}\big)^2}{1+\uuu^2},
\end{equation}
where $\uuu=\uuu(t)$ for some $t\in[1,\infty)$, $t\ne t_0$.
At these points we have $\Bs(x_+,y_+)=1$ and $\Bs(x_-,y_-)=\rrr(t)$. The latter identity holds true 
by~\eqref{Homogeneity}, because the points $(x_-,y_-)$ and~$(\uuu,\uuu^2+1)$ lie on the parabola
$$
y=\frac{1+\uuu^2}{\uuu^2}x^2.
$$

We write down the concavity property~\eqref{model_problem_dynamic}:
\begin{align}
\label{050201}
\rrr(t_0)\geq\frac{x_+-\uuu_0}{x_+-x_-}\cdot\rrr(t)+\frac{\uuu_0-x_-}{x_+-x_-}\cdot1,&
\qquad\text{if }\ t<t_0\,;
\\
\label{050202}
\rrr(t)\geq\frac{x_+-x_-}{x_+-\uuu_0}\cdot\rrr(t_0)+\frac{x_--\uuu_0}{x_+-\uuu_0}\cdot1,&
\qquad\text{if }\ t>t_0\,.
\end{align}
We may rewrite~\eqref{050201} and~\eqref{050202}
as follows
\begin{align*}
(x_+-\uuu_0)(\rrr(t)-\rrr(t_0))\le(x_--\uuu_0)(1-\rrr(t_0)),&\qquad\text{if }\ t<t_0\,;
\\
(x_+-\uuu_0)(\rrr(t)-\rrr(t_0))\ge(x_--\uuu_0)(1-\rrr(t_0)),&\qquad\text{if }\ t>t_0\,.
\end{align*}
Both these inequalities turn into
\begin{equation}
\label{050203}
\frac{\rrr(t)-\rrr(t_0)}{t-t_0}\ge\frac{x_--\uuu_0}{t-t_0}\cdot\frac{1-\rrr(t_0)}{x_+-\uuu_0}
\end{equation}
after dividing by $(t-t_0)(x_+-\uuu_0)$.

Let us calculate the right hand side of this inequality. Using~\eqref{model_UVG_deriv} we get
$$
x_+-\uuu_0=\vf(t_0)-\uuu(t_0)=t_0\uuu'(t_0)\,.
$$
From the definition~\eqref{060201} of $x_-$ we deduce
$$
x_--\uuu_0=\frac{\uuu^2-\uuu_0^2}{\uuu_0+\uuu\sqrt{1+\uuu^2-\uuu_0^2}}\,.
$$
Therefore,~\eqref{050203} may be rewritten as
\eq{\label{090202}
\frac{\rrr(t)-\rrr(t_0)}{t-t_0}\ge\frac{\uuu-\uuu_0}{t-t_0}\cdot
\frac{\uuu+\uuu_0}{\uuu_0+\uuu\sqrt{1+\uuu^2-\uuu_0^2}}\cdot
\frac{1-\rrr(t_0)}{t_0\uuu'(t_0)}\,.
}
We see that the right hand side has a limit as $t\to t_0$:
$$
\lim_{t\to t_0}\frac{\uuu-\uuu_0}{t-t_0}\cdot\frac{\uuu+\uuu_0}{\uuu_0+\uuu\sqrt{1+\uuu^2-\uuu_0^2}}\cdot
\frac{1-\rrr(t_0)}{t_0\uuu'(t_0)}=\frac{1-\rrr(t_0)}{t_0}\,,
$$
whence
$$
\liminf_{t\to t_0}\frac{\rrr(t)-\rrr(t_0)}{t-t_0}\ge\frac{1-\rrr(t_0)}{t_0}\,,
$$
what is exactly the desired estimate~\eqref{G_GW_ineq_t}.

It remains to check continuity of $\rrr$. First we note that~\eqref{090202} implies that $\rrr$ 
is an increasing function because $\uuu$ is. We will write down the same property~\eqref{model_problem_dynamic} 
with~$(x_+,y_+)$ being not the right but the left point of intersection
with the lower boundary of~$\OmSmile$, i.\,e., 
$$
x_+=\frac{\uuu_0-\sqrt{2-\uuu_0^2}}2.
$$
The point $(x_-,y_-)$ is defined as before by~\eqref{060201}, where $\uuu=\uuu(t)$ and $t>t_0$. Thus, now we 
have $x_+<\uuu_0<x_-$, $\Bs(x_+,y_+)=\tfrac12$, $\Bs(x_-,y_-)=\rrr(t)$, and the concavity 
property~\eqref{model_problem_dynamic} takes the form
$$
\rrr(t_0)\geq\frac{x_--\uuu_0}{x_--x_+}\cdot\tfrac12+\frac{\uuu_0-x_+}{x_--x_+}\cdot\rrr(t)\,.
$$
Therefore,
\eq{\label{090203}
(x_--\uuu_0)(\rrr(t_0)-\tfrac12)\geq(\uuu_0-x_+)(\rrr(t)-\rrr(t_0))\,,
}
whence
\begin{align*}
0\leq\rrr(t)-\rrr(t_0)&\leq\frac{x_--\uuu_0}{\uuu_0-x_+}(\rrr(t_0)-\tfrac12)\leq
\frac{x_--\uuu_0}{\uuu_0-x_+}\cdot\tfrac12
\\
&=\frac{\uuu^2-\uuu_0^2}{\uuu_0+\uuu\sqrt{1+\uuu^2-\uuu_0^2}}\cdot\frac1{\uuu_0+\sqrt{2-\uuu_0^2}}
\leq\frac{\uuu-\uuu_0}{\uuu_0+\sqrt{2-\uuu_0^2}}.
\end{align*}
Since $\uuu$ is a continuous function, this inequality proves that $\rrr$ is continuous at 
any point $t_0$, $t_0>1$ (i.\,e., $\uuu_0>-1$). It remains to verify continuity of the function 
$\rrr$ at the point $t=1$ from the right. We know that $\rrr$ is an increasing function, therefore, 
there exists a limit
\eq{
\lim_{t\to1+}\rrr(t)\df\rrr_1.
}
Due to~\eqref{090203} we have $\rrr(t_0)\ge\tfrac12$ for every $t_0>1$, whence $\rrr_1\geq\tfrac12$. 
At the same time we have already proved that $\rrr(t)\leq\Psi(\uuu(t))=1-\tfrac1{2t}$, i.\,e., 
$\rrr_1\leq\tfrac12$. So, $\rrr_1=\tfrac12$ and we have proved continuity of $\rrr$ on~$[1,\infty)$.
\end{proof}

Summarizing all preceding consideration, we conclude that the solution of the model problem is 
given by the following formula:
\begin{equation}
\Bs(x,y) =  1-\frac{\sqrt{1-\ttt^2}-\ttt}{2\sqrt{2}}e^{-\arcsin \ttt-\frac{\pi}{4}}, \quad 
\text{ where }\ \ttt = \ttt(x,y) = \frac{x}{\sqrt{2(y-x^2)}}.
\end{equation}

\subsection{Construction of the function and verification of the main inequality}\label{s42_foliation}
The set~$\Omega_R$ defined in~\eqref{OmegaR} has special relationship with the splitting 
rules~\eqref{SplittingRules}. It follows from Lemma~\ref{SimpleGeometry} that if~$(x,y,z)\in\Omega_R$ 
is split into some points~$(x_j,y_j,z_j)$ according to the rules~\eqref{SplittingRules}, then, first, 
all the points~$(x_j,y_j)$ lie on the tangent line to the parabola~${\bf y}-{\bf x}^2 = 1-z^2$, 
and second, all the points~$(x_j,y_j,z_j)$ belong to~$\Omega_R$. 
One may say that~$\Omega_R$ has separate dynamics. 
 
Thus, if we denote~$\Bell(x,y,\sqrt{1-y+x^2})$ by~$\BU(x,y)$, then~$\BU$ may be described as the minimal 
among functions~$G \colon \omega_1\to\mathbb{R}$ that satisfy the boundary conditions
$G(x,x^2) = \chi_{_{[0,\infty)}}(x)$ and the main inequality
\begin{equation}\label{roof_problem_dynamic}
\begin{gathered}
G(x,y) \ge \sum\limits_{j=1}^N\alpha_j G(x_j, y_j), \quad \text{where}
\\
x=\sum\limits_{j=1}^N\alpha_j x_j,\qquad y=\sum\limits_{j=1}^N\alpha_j y_j,\qquad\frac{y_j-y}{x_j-x}=2x,
\\
\sum\limits_{j=1}^N\alpha_j = 1,\qquad \alpha_j\geq 0, \qquad \text{and}\quad(x,y),\;(x_j,y_j)\in\omega_1.
\end{gathered}
\end{equation}
Note that the main inequality (or the splitting rules) almost coincides with the main 
inequality~\eqref{model_problem_dynamic} of the model problem. The only difference is that 
the two extremal problems are set on different domains (the splitting into~$N$ points with 
arbitrary~$N$ may be reduced to many splittings into~$2$ points; formally, we will not use this principle).

\begin{Le}\label{Le270101}
The function $\BU$ satisfies the following equality\textup:
\begin{equation*}
\BU(x,y)=b_{\sqrt{y-x^2}}(x,y),\qquad x^2 \leq y \leq \min(2x^2, x^2+1). 
\end{equation*}
\end{Le}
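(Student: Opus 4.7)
Since Lemma~\ref{SimpleMajorant} already gives $\BU(x,y) \le b_{\sqrt{y-x^2}}(x,y)$, I need only establish the reverse inequality on the region $x^2 \le y \le \min(2x^2, x^2+1)$. Writing $\eps := \sqrt{y-x^2}$, the assumption $y \le 2x^2$ translates into $|x| \ge \eps$, which places $(x,y)$ on the upper parabola of $\omega_\eps$ inside $D_1^\eps$ (if $x \ge \eps$) or $D_4^\eps$ (if $x \le -\eps$), with $b_\eps(x,y)$ equal to $1$ or $\tfrac{e}{2}e^{x/\eps}$ respectively. My plan is to handle these cases separately.

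For $x \ge \eps$ the symmetric two-point martingale $\varphi_0 = x$, $\varphi_1 \in \{x-\eps, x+\eps\}$ (each with probability $\tfrac12$) is admissible at $(x,y,\sqrt{1-\eps^2}) \in \Omega_R$ and yields $\varphi_\infty \ge 0$ almost surely, giving $\BU(x,y) \ge 1 = b_\eps(x,y)$. The same martingale at the boundary point $x = -\eps$ produces $\varphi_\infty \in \{-2\eps, 0\}$ with $\E f(\varphi_\infty) = \tfrac12 = b_\eps(-\eps, 2\eps^2)$.

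For $x < -\eps$, where the direct symmetric split fails (both branches would be negative), I plan to apply Lemma~\ref{AlongParabola} to $G = \Bell$ with endpoint parameter $x_{\text{end}} = -\eps+\eta$ for a small $\eta > 0$. Since $\tau < \eps$ throughout $[x, -\eps+\eta]$, the integrand $\chi_{[0,\infty)}(\tau-\eps)$ vanishes and the integral term is zero, so the inequality reduces to
\begin{equation*}
\BU(x,y) \;\ge\; e^{(\eps-\eta+x)/\eps} \, \liminf_{\delta \to 0^+} \Bell\bigl(-\eps+\eta,\, (-\eps+\eta)^2+\eps^2-\delta,\, \sqrt{1-\eps^2+\delta}\bigr).
\end{equation*}
At the approached point, which lies in $D_2^\eps$ on the upper parabola, the symmetric two-point martingale with $\varphi_1 \in \{-\eps+\eta \pm \sqrt{\eps^2-\delta}\}$ has its ``positive'' branch $-\eps+\eta+\sqrt{\eps^2-\delta} > 0$ whenever $\delta < \eta(2\eps-\eta)$, yielding $\E f(\varphi_\infty) = \tfrac12$ uniformly for all such small $\delta$. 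Hence the $\liminf$ is at least $\tfrac12$, and the resulting bound $\BU(x,y) \ge \tfrac12\, e^{(\eps-\eta+x)/\eps} = \tfrac{e}{2}e^{x/\eps}\, e^{-\eta/\eps}$ yields, upon letting $\eta \to 0^+$, the desired inequality $\BU(x,y) \ge \tfrac{e}{2}e^{x/\eps} = b_\eps(x,y)$.

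The main obstacle is this subtle use of Lemma~\ref{AlongParabola} at a ``shifted'' endpoint $-\eps+\eta$ rather than at $-\eps$ directly: the direct choice $x_{\text{end}} = -\eps$ would leave the residual at the singular state $(-\eps, 2\eps^2-\delta, \sqrt{1-\eps^2+\delta})$, where the natural two-point construction yields $\E f(\varphi_\infty) = 0$ for any $\delta > 0$ rather than $\tfrac12$. The small shift $\eta > 0$ moves the approached state into the $D_2^\eps$ region, where the two-point construction remains robust under perturbation of the second-moment constraint; the final limit $\eta \to 0^+$ then recovers the sharp constant $\tfrac{e}{2}e^{x/\eps}$.
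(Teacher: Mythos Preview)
Your proof is correct and follows essentially the same route as the paper's: the upper bound via Lemma~\ref{SimpleMajorant}, the symmetric two-point split~\eqref{popolam} for $x\ge\eps$, and for $x<-\eps$ an application of Lemma~\ref{AlongParabola} to the shifted endpoint $-\eps+\eta$ followed by~\eqref{popolam} and the limit $\eta\to0^+$. The only cosmetic difference is that the paper invokes Remark~\ref{Rem270101} (which drops the continuity hypothesis on~$f$) rather than Lemma~\ref{AlongParabola} itself; your direct use of the lemma is equally valid because $f=\chi_{[0,\infty)}$ is identically zero, hence continuous, on the interval $[x-\eps,-2\eps+\eta]$ where the integrand lives.
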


\begin{proof}
Lemma~\ref{SimpleMajorant} implies
\eq{
\BU(x,y)\leq b_{\sqrt{y-x^2}}(x,y),\qquad x^2 \leq y \leq \min(2x^2, x^2+1),
}
therefore, it suffices to prove the reverse inequality. Note that here we cannot use theorems 
from Section~\ref{S3} directly due to the discontinuity of~$f$. 

First, let $x^2 \leq y \leq \min(2x^2, x^2+1)$ and $x>0$. Then, by~\eqref{popolam} we have
$\BU(x,y)\geq 1$.

Second, let $(x_0,y_0)\in\omega_1$ with $x_0^2\leq y_0\leq2x_0^2$ and $x_0<0$. Let $\eps=\sqrt{y_0-x_0^2}$. 
Take any small $\theta>0$ and apply Remark~\ref{Rem270101} with $(x,y)=(-\eps+\theta,(-\eps+\theta)^2+\eps^2)$:
\begin{multline*}
\BU(x_0,y_0) \geq e^{-\frac{x-x_0}{\eps}}\liminf_{\delta\to 0+} \BU(x,y-\delta) \Geqref{popolam}
\\  
e^{-\frac{x-x_0}{\eps}}\liminf_{\delta\to 0+} \frac{f(x-\sqrt{y-\delta-x^2})+f(x+\sqrt{y-\delta-x^2})}{2}=
\frac{1}{2}e^{-\frac{x-x_0}{\eps}}.
\end{multline*}
Considering arbitrarily small~$\theta>0$, we obtain 
\eq{
\BU(x_0,y_0)\geq\frac12e^{\frac{x_0}{\eps}+1}\stackrel{\scriptscriptstyle\eqref{b_eps_def}}{=}b_\eps(x_0,y_0).
}
\end{proof}

Lemma~\ref{Le270101} implies that $\BU(x,y) = b_{\sqrt{y-x^2}}(x,y)$ on $\omega_1\setminus\OmSmile$. 
Moreover, $\BU|_{\OmSmile}$ satisfies the boundary conditions~\eqref{BCModel} and the main 
inequality~\eqref{model_problem_dynamic} of the model problem. Thus,
\begin{equation}
\BU(x,y) \geq \Bs(x,y),\quad (x,y)\in \OmSmile.
\end{equation} 
To prove Theorem~\ref{th_bellman_is_less_b_eps}, it suffices to show that the function~$G$ defined as
\begin{equation}
G(x,y) = 
\begin{cases}
b_{\sqrt{y-x^2}}(x,y),\quad &(x,y) \in \omega_1 \setminus \OmSmile,\\
\Bs(x,y),\quad &(x,y)\in \OmSmile,
\end{cases}
\end{equation}
satisfies the main inequality~\eqref{roof_problem_dynamic}. This is our target for the remaining 
part of the subsection. It is convenient to introduce the domains
\begin{equation}
\OmR=\{(x,y)\in\omega_1\mid y\le 2x^2,\,x\ge 0\};\qquad\OmL=\{(x,y)\in\omega_1\mid y\le 2x^2,\,x\le 0\}.
\end{equation}
The function $G$ is homogeneous: 
$G(\lambda {\bf x}, \lambda^2{\bf y}) = G({\bf x},{\bf y})$ for $\lambda \in (0,1]$, therefore, without loss of generality 
we may assume that $y=x^2+1$. If $(x,y) \notin \OmSmile$ then
$$
G(x,y)=\bbb{x}{y}\Geqref{ChainOfInequalities}\sum\limits_{j=1}^N\alpha_j\bbb{x_j}{y_j} 
\geq \sum\limits_{j=1}^N\alpha_j G(x_j,y_j).
$$
In what follows we consider only $(x,y) \in \OmSmile$ such that $ y \ne 2x^2$. 
Instead of verifying~\eqref{roof_problem_dynamic} we will prove the inequality
\begin{equation}\label{eq280101}
G(\xx,\yy)\le G(x,y)+\frac{\partial G}{\partial x}(x,y)\cdot(\xx-x)+\frac{\partial G}{\partial y}(x,y)\cdot(\yy-y)
\end{equation}
for $(\xx,\yy)\in\omega_1\cap\ell$, where $\ell = \{({\bf x},{\bf y})\mid {\bf y} - y = 2x ({\bf x} -x)\}$. 
Indeed, one may argue as in the proof of Lemma~\ref{main_under_tangent_Le} to show that~\eqref{eq280101} 
yields~\eqref{roof_problem_dynamic}.

The right hand side of~\eqref{eq280101} is linear with respect to $\xx$ when $(\xx,\yy) \in \ell$ and is equal to
$$
L(\xx)=\mmm(x,y)+\frac{\partial\mmm}{\partial x}(x,y)\cdot(\xx-x)+
\frac{\partial\mmm}{\partial y}(x,y)\cdot2x(\xx-x)
$$
since $G = \mmm$ on $\OmSmile$. Lemma~\ref{VerifMainIneq} implies that~\eqref{eq280101} holds true for 
$(\xx,\yy) \in \OmSmile \cap \ell$ because $G(\xx,\yy) = \mmm(\xx,\yy)$. The point 
$(\frac{x+\sqrt{2-x^2}}{2}, 1+x\sqrt{2-x^2})$ is the intersection of $\ell$ with the common boundary of~$\OmR$ and $\OmSmile$, therefore, $L(\frac{x+\sqrt{2-x^2}}{2}) =1$ by the construction of~$\mmm$ 
(see~\eqref{grad_direction}). Also, we know that $L(x) = \mmm(x,y) < 1$, therefore $L(\xx)\geq 1$ 
for $(\xx,\yy) \in \OmR\cap \ell$.   

Thus, it remains  to prove that~\eqref{eq280101} holds for $(\xx,\yy) \in \OmL\cap \ell$:
\eq{\label{eq280102} 
G(\xx,\yy)\leq L(\xx ) = 1+\frac{2\xx-(x+\sqrt{2-x^2})}{x-\sqrt{2-x^2}}\big(G(x,x^2+1)-1\big). 
}
Recall that $G(\xx, \yy) = \bbb{\xx}{\yy}$ is given by~\eqref{b_eps_def} for $(\xx,\yy) \in \OmL$:
\eq{\label{eq280103}
G(\xx,\yy)=\frac12e^{1+\frac{\xx}{\sqrt{\yy-\xx^2}}}=\frac12e^{1+\frac{\xx}{\sqrt{1-(x-\xx)^2}}}, 
}
here we have used that $\yy = 2x (\xx-x) + x^2+1$. The value $G(x,x^2+1)$ equals to $\Psi(x)$ defined 
in~\eqref{psi_def}: 
\begin{equation}\label{eq280104}
G(x,x^2+1) = \Psi(x) = 1-\frac{\sqrt{2-x^2}- x}{4}e^{-\arcsin\left(\frac{x}{\sqrt{2}}\right)-\frac{\pi}{4}}.
\end{equation}
We rewrite~\eqref{eq280102} using~\eqref{eq280103} and~\eqref{eq280104}:
\eq{\label{eq280105}
\frac{1}{2}e^{1 + \frac{\xx}{\sqrt{1-(x-\xx)^2}}} \leq 
1 + \frac{2\xx - (x+\sqrt{2-x^2})}{4}  e^{-\arcsin\left(\frac{x}{\sqrt{2}}\right)-\frac{\pi}{4}}.
}
We introduce the variables 
\eq{
\alpha = \frac{\pi}{4} + \arcsin{\frac{x}{\sqrt{2}}}, \qquad \gamma = \arcsin(x-\xx).
}
Then,
\begin{gather*}
x = \sqrt{2}\sin\left(\alpha - \frac{\pi}{4}\right) = \sin \alpha - \cos \alpha,
\\
\xx = x-\sin\gamma = \sin \alpha - \cos \alpha-\sin\gamma,
\\
x+\sqrt{2-x^2}=\sqrt{2}\sin\left(\alpha-\frac{\pi}{4}\right)+\sqrt2\cos\left(\alpha-\frac\pi4\right)=2\sin\alpha.
\end{gather*}
Note that $\alpha,\gamma \in [0,\frac{\pi}{2}]$. The condition $(\xx,\yy) \in \OmL$ implies that 
$\xx \leq \frac{x-\sqrt{2-x^2}}{2}$, i.\,e., 
$x - \xx \geq \frac{x+\sqrt{2-x^2}}{2}$. From this we obtain $0 \leq \alpha \leq \gamma \leq \frac{\pi}{2}$. 
Rewrite~\eqref{eq280105} in the variables $\alpha, \gamma$:
\eq{
\frac12e^{1+\frac{\sin\alpha-\cos\alpha-\sin\gamma}{\cos\gamma}}\leq1-\frac{\cos\alpha+\sin\gamma}2e^{-\alpha}.
}

It suffices to show that for the fixed parameter $\gamma$, $0 \le \gamma\le\frac\pi2$, the function
\begin{equation*}
F(\alpha)=e^{1+\frac{\sin\alpha-\cos\alpha-\sin\gamma}{\cos\gamma}}-2+
\cos\alpha e^{-\alpha}+\sin{\gamma}e^{-\alpha}
\end{equation*}
attains only non-positive values for $0\le\alpha\le\gamma.$
Our next step is to prove the convexity of $F$. Its first and second derivatives are written below
\begin{equation*}
F'(\alpha) = e^{-\alpha}\left(-\sin{\alpha}-\cos\alpha - \sin\gamma\right) + 
e^{1+\frac{\sin\alpha - \cos\alpha - \sin\gamma}{\cos\gamma}}
\left(\frac{\cos\alpha + \sin\alpha}{\cos\gamma}\right);
\end{equation*}
\begin{equation*}
F''(\alpha) = e^{-\alpha}\left(2\sin \alpha + \sin\gamma\right) + 
e^{1+\frac{\sin\alpha-\cos\alpha-\sin\gamma}{\cos \gamma}}
\left(\frac{\cos\alpha+\sin\alpha}{\cos\gamma}\right)^2 
\end{equation*}
\begin{equation*}
+ e^{1+\frac{\sin\alpha-\cos\alpha-\sin\gamma}{\cos\gamma}}\left(\frac{\cos\alpha-\sin\alpha}{\cos\gamma}\right).
\end{equation*}
The first term on the right side in the last equality is non-negative. 
By grouping the second and third terms, we get that the second derivative of the function $F$ is 
also always non-negative. Indeed, it follows from the estimate $1\ge\sin\alpha\cos\gamma$ and that all 
other expressions involved are  positive. We have shown that the function $F$ is convex. 
To estimate its values from above on $[0,\gamma]$, it suffices to show  $F(0) \le 0$ and $F(\gamma) \le 0$.
We start with the case $\alpha = 0$:
\begin{equation*}
F(0) = (1+\sin \gamma) + e^{1-\frac{1+\sin\gamma}{\cos\gamma}} - 2.
\end{equation*}
The statement we need to prove is equivalent to the fact that the function
\begin{equation*}
\fff(\gamma) = 1 - \frac{1+\sin \gamma}{\cos \gamma} - \log(1-\sin \gamma)
\end{equation*}
takes only non-positive values when $\gamma \in [0,\pi /2]$. It should be noted that $\fff(0) = 0$, while
 \begin{equation*}
\fff'(\gamma)=-\frac{\cos^2\gamma+\sin\gamma(1+\sin\gamma)}{\cos^2\gamma}+\frac{\cos\gamma}{1-\sin\gamma} 
= \frac{(\cos \gamma -1)(1+\sin \gamma)}{\cos^2\gamma}\le 0.
\end{equation*}
We have obtained that $\fff(\gamma) \le 0$, so the estimate $F(0) \le 0$ follows.
Now, we verify the inequality for the right endpoint of the segment, i.\,e., for $\alpha = \gamma$:
 \begin{equation*}
F(\gamma) = e^{-\gamma}(\sin\gamma+\cos\gamma) - 1 \le 0.
\end{equation*}
One may easily see that for $\gamma = 0$ the inequality turns into equality. 
Taking the derivative of the function on the left side of this inequality, we get~$-2\sin\gamma e^{-\gamma}$.
The last term is negative for  $\gamma \in (0, \frac{\pi}{2}]$ and the estimate $F(\gamma) \le 0$ follows.

\subsection{Computation of the constant}\label{s44_sharp}
In this section, we present the proof of Corollary~\ref{cor_sharp_tail}.
Recall that our goal is to find the optimal constant in~\eqref{tail_sq}. Since the square function is 
homogeneous and vanishes on constants, it suffices to find the best possible constant~$c_{opt}$ in the estimate
\begin{equation}\label{translated_ineq}
P(\varphi_\infty\geq0)\leq c e^{-\lambda},
\qquad \text{  where  }\ \varphi_0 =-\lambda\ \text{ and }\ \|S\varphi\|_{L^\infty} = 1.
\end{equation}
Recall that the Bellman function $\Bell(x,y,z)$ was defined by formula \eqref{Bellman}, and in 
Lemma~\ref{SimpleMajorant} we have shown that the inequality $\Bell(x,y,z)\leq b_{\sqrt{1-z^2}}(x,y)$ 
is true. Thus, the optimal constant $c_{opt}$ may be estimated as follows
\begin{equation}\label{copt}
c_{opt} = \sup\big\{e^\lambda\Bell(-\lambda,y,0)\mid \lambda\in\R,\,\lambda^2\leq y\leq\lambda^2+1\big\}\leq
\sup\big\{e^{-x}b_1(x,y)\mid (x,y)\in\omega_1\big\}\,.   
\end{equation}
Recall that we have split the domain $\omega_1$ into the subdomains $D^1_1, D^1_2$, $D^1_3$, $D^1_4$, and 
the function $b_{1}(x,y)$ was defined on them by~\eqref{b_eps_def}. We will continue the argument 
by the estimation of $s(x,y):=e^{-x}b_1(x,y)$ in each subdomain.

Clearly, $s(x,y) \le 1$, when $(x,y) \in D_1^1$. 

For $(x,y) \in D^1_2$, we have
$$
s(x,y) = e^{-x} \left(1-\frac{y-2x}{8}\right) \le e^{-x} \left(1-\frac{|x|-x}{4}\right),
$$
since $y \ge 2|x|$. The function on the right hand side decreases on $[-1,1]$ and takes value 
$\frac{e}{2}$ at $-1$, therefore $s(x,y)\leq \frac{e}{2}$ on $D_2^1$. 

Next, consider $(x,y) \in D^1_3$. The relations $y \le -2 x$ and $-2 \le x \le 0$ imply
$$
s(x,y) = \left(1 - \frac{x^2}{y}\right)e^{-x} \le \left(1 + \frac{x}{2} \right)e^{-x} \le \frac{e}{2}.
$$

Finally, we take $(x,y) \in D^1_4$ and set $t = \sqrt{1 -y + x^2}$ to get
\begin{equation}\label{EstimateOnD4}
s(x,y) = \frac{e}{2} (1-t)e^t \le \frac{e}{2}, 
\end{equation}
since $t \in [0,1].$ Thus, we have proved~$c_{opt}\le \frac{e}{2}$. 

Now we notice that for $x=-1$ and $y=2$ we have $\Bell(-1,2,0)=b_1(-1,2)=\frac12$, 
and therefore,~\eqref{copt} implies that $c_{opt}\geq\frac{e}2$, which means $c_{opt}=\frac{e}2$.

\bibliography{mybib}{}
\bibliographystyle{amsplain}

St. Petersburg State University, Department of Mathematics and Computer Science.

d.m.stolyarov@spbu.ru

vasyunin@pdmi.ras.ru

pavelz@pdmi.ras.ru

i.zlotnikov@spbu.ru
\end{document}